\newtheorem{lemme}{Lemma}
\newtheorem*{prop}{Proposition}
\newtheorem*{theo}{Theorem}
\newtheorem{theoreme}{Theorem}
\newtheorem{cor}{Corollary}
\newtheorem{proposition}{Proposition}
\theoremstyle{definition}
\newtheorem*{defi}{Definition}
\newtheorem{defin}{Definition}
\newtheorem*{rem}{Remark}
\newtheorem*{rems}{Remarks}
\newtheorem*{claim}{Claim}
\newtheorem*{ex}{Example}
\newtheorem*{exs}{Examples}
\newtheorem*{qu}{Questions}
\newtheorem*{nota}{Notation}
\title{Composition operators on weighted Bergman spaces of Dirichlet series}
\date{}
\newcommand\e{{\rm e}}
\newcommand{\biindice}[3]%
{%

\begin{array}[t]{c}
{\displaystyle #1}\\
{\scriptstyle #2}\\
{\scriptstyle #3}
\end{array}

}
\titleformat{\section}[block]
{\normalfont\large\bfseries\filcenter}
{\thesection}
{1em}
{}
\titleformat{\subsection}[block]
{\normalfont\bfseries\filcenter}
{\thesubsection}
{1em}
{}
\renewcommand{\thesubsection}{\Alph{subsection}}
\begin{document}
\begin{center}
{\Large \bfseries Composition operators on weighted Bergman spaces of Dirichlet series}
\vspace{5pt}

{ \textit{ Maxime Bailleul }

 				   \par\vspace*{20pt}} 
\end{center}

\begin{abstract} We study boundedness and compactness of composition operators on weighted Bergman spaces of Dirichlet series. Particularly, we obtain in some specific cases, upper and lower bounds of the essential norm of these operators and a criterion of compactness on classicals weighted Bergman spaces. Moreover, a sufficient condition of compactness is obtained using the notion of Carleson's measure.
\end{abstract}

\begin{center} {\footnotesize {\bfseries Mathematics Subject Classification.}

 Primary: 47B33 - Secondary: 46E15.} \end{center}
 
\begin{center} {\footnotesize {\bfseries Keywords.}

 weighted Bergman spaces - Carleson measure - composition operator - Dirichlet series - essential norm - generalized Nevanlinna counting function} \end{center} 

\section{Introduction}
In \cite{hedenmalm1995hilbert}, the authors defined the Hardy space $ \mathcal{H}^2$ of Dirichlet series with square-summable coefficients. Thanks to the Cauchy-Schwarz inequality, it is easy to see that $\mathcal{H}^2$ is a  space of analytic functions on $ \mathbb{C}_{ \frac{1}{2}}:= \lbrace s \in \mathbb{C}, \, \Re(s) > \frac{1}{2} \rbrace$ and that this domain is maximal. F. Bayart introduced in \cite{bayart2002hardy} the more general class of Hardy spaces of Dirichlet series $ \mathcal{H}^p$ ($1 \leq p < + \infty$). 

In \cite{gordon1999composition}, the bounded composition operators on $ \mathcal{H}^2$ are characterized, that is to say the holomorphic functions $\Phi: \mathbb{C}_{ \frac{1}{2}} \rightarrow \mathbb{C}_{ \frac{1}{2}}$ such that for any $f \in \mathcal{H}^2$, $f \circ \Phi \in \mathcal{H}^2$. We denote by $\mathcal{D}$ the class of functions $f$ which admit representation by a convergent Dirichlet series in some half-plane and for $\theta \in \mathbb{R}$, $\mathbb{C}_{\theta}$ will be the following half-plane $\lbrace s \in \mathbb{C}, \, \Re(s)> \theta \rbrace$. We shall denote $\mathbb{C}_+$ instead of $\mathbb{C}_{0}$.

\begin{theo}[\cite{gordon1999composition},\cite{queffelec2013approximation}]
The function $\Phi$ determines a bounded composition operator on $\mathcal{H}^2$ if and only:
\[\Phi(s) = c_0 s + \varphi(s)\]
 where $c_0$ is a nonnegative integer, $\varphi \in \mathcal{D}$ and $ \varphi$ converges uniformly in $ \mathbb{C}_{\varepsilon}$ for every $ \varepsilon>0$ and has the following properties:
 
a) If $c_0 \geq 1$, $ \varphi(\mathbb{C}_+) \subset \mathbb{C}_+$.

b) If $c_0=0$, $ \varphi( \mathbb{C}_+) \subset \mathbb{C}_{1/2}$.

Moreover, when $c_0 \geq 1$, $C_{ \Phi}$ is a contraction.
\end{theo}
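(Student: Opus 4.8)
I would prove the equivalence in two halves --- necessity of the description and sufficiency --- and read off the contraction estimate from the sufficiency argument. For the necessity, assume $C_\Phi$ is bounded. The first move is to feed $C_\Phi$ the functions $n^{-s}$, which have $\mathcal{H}^2$-norm $1$, so that $n^{-\Phi}=C_\Phi(n^{-s})\in\mathcal{H}^2$ for every integer $n\ge 1$; in particular each $n^{-\Phi}$ is an ordinary Dirichlet series converging in $\mathbb{C}_{1/2}$ and bounded on every $\mathbb{C}_\sigma$, $\sigma>\tfrac12$. Isolating the leading term: if $m_n$ is the least integer with a nonzero coefficient in $n^{-\Phi}$, then $n^{-\Phi(s)}\sim a_{m_n}m_n^{-s}$ as $\Re s\to+\infty$, whence $\Phi(s)=\frac{\log m_n}{\log n}\,s+O(1)$ there. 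Running this with $n=2$ and $n=3$ forces $\frac{\log m_2}{\log 2}=\frac{\log m_3}{\log 3}=:c_0$, and since $2^{c_0}=m_2$ and $3^{c_0}=m_3$ are positive integers a short divisibility argument makes $c_0$ a nonnegative integer. Setting $\varphi:=\Phi-c_0 s$, one checks that $2^{-\varphi}=2^{c_0 s}2^{-\Phi}$ is a generalized Dirichlet series tending to a nonzero limit at $+\infty$, so a term-by-term logarithm makes $\varphi$ a generalized Dirichlet series; demanding that \emph{every} $n^{-\Phi}$ be \emph{ordinary} pins its frequencies into $\log\mathbb{N}$, i.e.\ $\varphi\in\mathcal{D}$, and the uniform convergence of $\varphi$ on each $\mathbb{C}_\varepsilon$ follows by transporting, through the exponential, the boundedness of $2^{\pm\varphi}$ there (Bohr's theorem equating the abscissas of bounded and of uniform convergence for $\log(2^{-\varphi})$).

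For the half-plane conditions I would combine two inputs. The reproducing kernel of $\mathcal{H}^2$ is $K_w(s)=\zeta(s+\bar w)$ with $\|K_w\|^2=\zeta(2\Re w)$, and $C_\Phi^{\ast}K_w=K_{\Phi(w)}$, which already gives $\Phi(\mathbb{C}_{1/2})\subset\mathbb{C}_{1/2}$ and $\zeta(2\Re\Phi(w))\le\|C_\Phi\|^2\zeta(2\Re w)$ for $w\in\mathbb{C}_{1/2}$; letting $\Re w\to+\infty$, so $\Phi(w)-c_0 w\to c_1$, isolates $\Re c_1>\tfrac12$ in the case $c_0=0$. To reach all of $\mathbb{C}_+$ (where $\varphi$ now continues analytically), I would use that $\|C_\Phi(n^{-s})\|_{\mathcal{H}^2}^2=\|n^{-\varphi}\|_{\mathcal{H}^2}^2=\int_{\mathbb{T}^\infty}e^{-2(\log n)\,\Re(\mathcal{B}\varphi)}$, where $\mathcal{B}$ is the Bohr lift; a Laplace-type estimate then shows this grows like $n^{-2\inf_{\mathbb{C}_+}\Re\varphi}$ up to polynomial factors, so boundedness forces $\inf_{\mathbb{C}_+}\Re\varphi\ge0$, giving (a) when $c_0\ge1$. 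Condition (b) needs more, since $n^{-s}$ alone only yields $\inf_{\mathbb{C}_+}\Re\varphi\ge0$: here I would test on the concentrating functions $g_N=N^{-1/2}\sum_{n\le N}n^{-s}$ (of unit norm), for which a parallel computation gives $\|C_\varphi g_N\|_{\mathcal{H}^2}^2\asymp N^{\,1-2\inf_{\mathbb{C}_+}\Re\varphi}$ up to lower-order factors, so boundedness forces $\inf_{\mathbb{C}_+}\Re\varphi\ge\tfrac12$, i.e.\ $\varphi(\mathbb{C}_+)\subset\mathbb{C}_{1/2}$.

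For the sufficiency when $c_0\ge1$ I would first reduce to $c_0=1$: writing $D_c\colon f(s)\mapsto f(cs)$, one has $C_\Phi=C_{\varphi_1}\circ D_{c_0}$ with $\varphi_1(s)=s+\varphi(s)/c_0$ again an admissible symbol of leading coefficient $1$, and $D_{c_0}$ is an isometric embedding of $\mathcal{H}^2$ since $n^{-s}\mapsto(n^{c_0})^{-s}$. For $c_0=1$, $\Phi(s)=s+\varphi(s)$ maps $\mathbb{C}_+$ into $\mathbb{C}_+$ with $\Re\Phi\ge\Re s$, and I would prove $\|C_\Phi f\|\le\|f\|$ for a Dirichlet polynomial $f$ by passing through the Bohr lift: $\mathcal{B}(C_\Phi f)(z)=\mathcal{B}f\big((z_j\,p_j^{-\mathcal{B}\varphi(z)})_j\big)$ (a.e.\ on $\mathbb{T}^\infty$), the composition of $\mathcal{B}f\in H^2(\mathbb{T}^\infty)$ with a map $\mathbb{T}^\infty\to\overline{\mathbb{D}}^\infty$ whose coordinates have modulus $p_j^{-\Re\mathcal{B}\varphi(z)}\le1$; a Poisson--Jensen (Littlewood subordination) argument then gives $\|\mathcal{B}(C_\Phi f)\|_{L^2(\mathbb{T}^\infty)}\le\|\mathcal{B}f\|_{L^2(\mathbb{T}^\infty)}$, and density of Dirichlet polynomials finishes the contraction.

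The case $c_0=0$ is where I expect the real obstacle to lie: $\Phi=\varphi$ only maps $\mathbb{C}_+$ into $\mathbb{C}_{1/2}$, composition can drive points toward the critical line $\Re s=\tfrac12$, and the bound is no longer a contraction, so no soft argument suffices. The plan is to represent $\|f\circ\varphi\|_{\mathcal{H}^2}^2$ (for $f$ a polynomial) as an integral $\int_{\mathbb{C}_{1/2}}|f|^2\,d(\varphi_{\ast}\mu)$ of $|f|^2$ against the push-forward under $\varphi$ of the natural ``vertical-mean'' measure $\mu$ on $\mathbb{C}_{1/2}$ (made precise through the Bohr lift and vertical limit functions), and then to prove that $\varphi_{\ast}\mu$ is a Carleson measure for $\mathcal{H}^2$ --- the embedding $\int|f|^2\,d\nu\le C\|f\|_{\mathcal{H}^2}^2$ --- using $\Re\varphi\ge\tfrac12$ to control the mass near $\partial\mathbb{C}_{1/2}$ and the uniform convergence of $\varphi$ on each $\mathbb{C}_\varepsilon$ to control the behaviour as $\Re s\downarrow0$. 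This Carleson-measure / critical-line subordination estimate is the technical heart of the statement, and it is exactly the machinery the rest of the paper is built to supply.
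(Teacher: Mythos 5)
First, note that the paper does not prove this statement at all: it is quoted as background from Gordon--Hedenmalm and Queff\'elec--Queff\'elec, and the machinery of those proofs (vertical limits $f_\chi$, the identity $(f\circ\Phi)_\chi=f_{\chi^{c_0}}\circ\Phi_\chi$, the a.s.\ non-continuable series $\sum_p p^{-1/2}(\log p)^{-1}p^{-s}$, Hurwitz-type transfer from a.e.\ $\chi$ to all $\chi$) is precisely what the paper re-runs for its own Theorem 1. Measured against that, your necessity half has two concrete gaps. (i) You cannot get ``$c_0$ is a nonnegative integer'' from $2^{c_0},3^{c_0}\in\mathbb{N}$ by a short divisibility argument: that two-base statement is an instance of the (open) four exponentials conjecture. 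You must use that $n^{c_0}\in\mathbb{N}$ for \emph{every} $n$ (a classical finite-difference argument), or at least three multiplicatively independent bases plus the six exponentials theorem. (ii) Your route to uniform convergence of $\varphi$ on every $\mathbb{C}_\varepsilon$, and to the half-plane conditions, silently assumes that $\varphi$ already extends analytically to $\mathbb{C}_+$ with $\Re\varphi$ controlled there. At that stage you only know $n^{-\Phi}\in\mathcal{H}^2$, which gives boundedness of $2^{-\varphi}$ on $\mathbb{C}_{1/2+\delta}$ only (and $2^{+\varphi}$ is never bounded once $\Re\varphi\ge 0$), so Bohr's theorem cannot reach $\mathbb{C}_\varepsilon$; moreover passing from uniform convergence of $2^{-\varphi}$ to that of $\varphi$ needs $|2^{-\varphi}|$ bounded away from $0$, i.e.\ $\Re\varphi$ bounded above, which you do not have --- this strengthening is exactly Queff\'elec--Queff\'elec's Th.\,3.1 and is not a one-line transport. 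The analytic continuation of $\Phi$ to $\mathbb{C}_+$ and the mapping properties are the heart of the necessity direction and require the vertical-limit apparatus you never invoke; your Bohr-lift/Laplace computation of $\|n^{-\varphi}\|$ and the test functions $g_N$ presuppose boundary values of $\varphi_\chi$ on the imaginary axis, i.e.\ the very extension to be proved.

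In the sufficiency half, the reduction $C_\Phi=C_{\varphi_1}\circ D_{c_0}$ with $D_{c_0}$ an isometry is correct and pleasant, but the key subordination step as you state it would fail. The inequality ``$\mathcal{B}f$ composed with a map whose coordinates are $z_jp_j^{-u(z)}$, $\Re u\ge 0$, has $L^2(\mathbb{T}^\infty)$ norm at most $\|\mathcal{B}f\|_2$'' is false for a general measurable $u$: take $f=a_1+a_2 2^{-s}$, so $\mathcal{B}f(z)=a_1+a_2z_1$, and choose $u(z)$ purely imaginary with $2^{-u(z)}=\overline{a_2z_1}/|a_2z_1|$; the composed function is identically $a_1+|a_2|$, whose norm exceeds $(|a_1|^2+|a_2|^2)^{1/2}$. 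So ``all coordinates have modulus at most one plus Littlewood subordination'' is not a valid reason; the genuine proof must exploit that $u=\mathcal{B}\varphi$ is the boundary value of the vertical limits $\varphi_\chi$, via one-variable Poisson/subordination estimates for $s\mapsto\Phi_\chi(s)$ along vertical lines together with Carlson's theorem (this is the Gordon--Hedenmalm/Bayart argument), and measure-zero boundary issues are handled by working at $\Re s=\sigma>0$ and letting $\sigma\to 0$. Finally, your $c_0=0$ plan (pullback measure plus a Carleson embedding for $\mathcal{H}^2$) is indeed the route taken in the literature, but its engine --- the local embedding of $\mathcal{H}^2$ into $H^2$ of bounded segments of the critical line, which converts a half-plane Carleson condition into an $\mathcal{H}^2$ bound --- goes unmentioned, and without it the ``technical heart'' you defer to is not supplied by anything in this paper.
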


This theorem is a strengthening of the original result (see \cite{queffelec2013approximation},Th3.1). In \cite{bayart2002hardy}, F. Bayart proved that this result is true on the space $\mathcal{H}^p$ when $p \geq 1$ and $c_0 \neq 0$. When $c_0=0$ the condition is necessary for every $p \geq 1$ and sufficient if $p$ is even. The goal of this paper is to study composition operators on the weighted Bergman $\mathcal{A}_{\mu}^p$ spaces of Dirichlet series defined in \cite{bailleullefevre2013}. In order to define them, we need to recall the Bohr's point of view principle and the definition of the Hardy spaces $\mathcal{H}^p$.

Let ${n\geq 2}$ be an integer, we shall denote $\e_n$ the function defined by \linebreak $\e_n(z)=n^{-z}$. The integer $n$ can be written (in a unique way) as a product of prime numbers $n=p_1^{ \alpha_1} \cdot \cdot \, \, p_k^{ \alpha_k}$ where $p_1=2
, \, p_2=3 $ etc \dots If $s$ is a complex number, we consider $z=(p_1^{-s}, \, p_2^{-s}, \dots )$. let $f$ be a Dirichlet series of the following form
\begin{equation}
f= \sum_{n=1}^{+ \infty} a_n \e_n,
\end{equation}
then
\[ f(s) = \sum_{n=1}^{ + \infty } a_n (p_1^{-s})^{ \alpha_1} \cdot \cdot \, \, (p_k^{-s})^{ \alpha_k} = \sum_{n=1}^{ + \infty } a_n \, z_1^{ \alpha_1} \cdot \cdot \, \, z_k^{ \alpha_k}.  \]
So we can see a Dirichlet series as a Fourier series on the infinite-dimensional polytorus $ \mathbb{T}^{ \infty}= \lbrace(z_1,z_2, \cdots), \, \vert z_i \vert=1, \, \forall i \geq 1 \rbrace$. We shall denote this Fourier series $D(f)$. 

Let us fix now $p\ge1$, the space $H^p( \mathbb{T}^{ \infty})$ is the closure of the set of analytic polynomials with respect to the norm of $L^p( \mathbb{T}^{ \infty}, \, m)$ where $m$ is the normalized Haar measure on $\mathbb{T}^{\infty}$ (see \cite{cole1986representing} for more details). Let $f$ be a Dirichlet polynomial, by the Bohr's point of view $D(f)$ is an analytic polynomial on $ \mathbb{T}^{ \infty}$ and by definition $ \Vert f \Vert_{ \mathcal{H}^p}:= \Vert D(f) \Vert_{ H^p( \mathbb{T}^{ \infty})}$. The space $ \mathcal{H}^p$ is defined by taking the closure of Dirichlet polynomials with respect to this norm. Consequently $ \mathcal{H}^p$ and $ H^p( \mathbb{T}^{ \infty})$ are isometrically isomorphic. \\

For $ \sigma>0$, $f_{ \sigma}$ will be the translate of $f$ by $\sigma$, \textit{ie} ${f_{ \sigma}(s):= f ( \sigma+s)}$. We shall denote by $\mathcal{P}$ the set of Dirichlet polynomials.

Let $p \geq 1$, $P \in \mathcal{P}$ and let $ \mu$ be a probability measure on $(0, + \infty)$ such that $0 \in Supp( \mu)$. Then 
\[ \Vert P \Vert_{ \mathcal{A}_{ \mu}^p}:= \bigg{(} \int_{0}^{+ \infty} \Vert P_{ \sigma} \Vert_{ \mathcal{H}^p}^p \, d \mu( \sigma) \bigg{)}^{1/p}. \]
$\mathcal{A}_{\mu}^p$ is the completion of $\mathcal{P}$ with respect to this norm. When $ d\mu( \sigma) = 2 e^{-2 \sigma} \, d \sigma$, these spaces are simply denoted by $ \mathcal{A}^p$. It is shown in \cite{bailleullefevre2013} that they are spaces of convergent Dirichet series on $\mathbb{C}_{1/2}$. \\

In section $2$, we recall some facts about Dirichlet series and precise some notations.

In section $3$, we study the boundedness of $C_{ \Phi}$ on $\mathcal{A}_{\mu}^p$. For $\Phi(s)=c_0s + \varphi(s)$ with $c_0 \geq 1$ and $\varphi \in \mathcal{D}$, $\Phi$ induces a bounded composition operator from $ \mathcal{A}_{\mu}^p$ to $ \mathcal{A}_{\mu}^p$ if and only if $\varphi$ converges uniformly on $ \mathbb{C}_{\varepsilon}$ for every $ \varepsilon>0$ and $ \varphi( \mathbb{C}_+) \subset \mathbb{C}_+$. When $c_0=0$, we give sufficient and necessary  conditions.

In section $4$, we define generalized Nevanlinna counting function in order to obtain some estimates of the essential norm of $C_{ \Phi}$ on $ \mathcal{H}^2$ and $ \mathcal{A}_{ \mu}^2$. Particularly we obtain a criterion of compactness of $C_{\Phi}$ in some specific cases.

In section $5$, we obtain a majorization of the generalized Nevanlinna counting function with help of the associated Carleson function (see definition in section $5$) and then we get other sufficient conditions of compactness of $C_{\Phi}$. 

\section{Background material}
Let $f$ be a Dirichlet series of form $(1)$. We do not recall the definition of abscissa of simple (resp. absolute) convergence denoted by $\sigma_c$ (resp. $\sigma_a$), see \cite{queffelecBook} or \cite{tenenbaum1995introductiona} for more details. We shall need the two other following abscissas:
\[ \left. \begin{array}{ccl}
\sigma_u(f) & = &\inf \lbrace a \, | \hbox{ The series }\,(1) \hbox{ is uniformly convergent for } \Re(s)>a \rbrace   \\
            & = & \hbox{abscissa of uniform convergence of } f.   \\
            &   &                                                 \\   
\sigma_b(f) & = &\inf \lbrace a \, | \hbox{ the function } f \hbox{ has an analytic, bounded extension for } \Re(s)>a \rbrace   \\
            & = & \hbox{abscissa of boundedness of } f.  
\end{array}\right. \]

It is easy to see that $\sigma_c(f) \leq \sigma_u(f) \leq \sigma_a(f)$. An important result is that $\sigma_u(f)$ and $\sigma_b(f)$ coincide: this is the Bohr's theorem (see \cite{bohr1913uber}), it is really important for the study of $ \mathcal{H}^{ \infty}$, the algebra of bounded Dirichlet series on the right half-plane $\mathbb{C}_+$ (see \cite{maurizi2010some}). We shall denote by $ \Vert \,\cdot \,\Vert_{ \infty}$ the norm on this space:
\[ \Vert f \Vert_{ \infty}:= \sup_{ \Re(s)>0} \vert f(s) \vert. \]

%

The infinite-dimensional polytorus $ \mathbb{T }^{ \infty}$ can be identified with the group of complex-valued characters $ \chi$ on the positive integers which satisfy the following properties  
\[\left\lbrace
\begin{array}{ll}
\vert \chi(n) \vert=1 &\quad \forall n \geq 1,  \\
 \chi(nm) = \chi(n) \, \chi(m) &\quad \forall n, \, m \geq 1. \end{array}   \right. \]
To obtain this identification for $ \chi = ( \chi_1, \, \chi_2, \dots) \in \mathbb{T}^{ \infty}$, it suffices to define $\chi$ on the prime numbers by $ \chi(p_i)= \chi_i$ and use multiplicativity. \\

Let $f$ be a Dirichlet series absolutely convergent in a half-plane. For any sequences $( \tau_n) \subset \mathbb{R}$, we consider vertical translations of $f$:
\[ (f_{ \tau_n}(s)):=(f(s + i \tau_n)).  \]
By Montel's theorem, this sequence is a normal family in the half-plane of absolute convergence of $f$ and so there exists a convergent subsequence, say $(f_{\tau_{n_k}})$, such that $f_{\tau_{n_k}}$ converges uniformly on compact subsets of the domain of absolute convergence of $f$ to a limit function  $\tilde{f}$. We say that  $\tilde{f}$ is a vertical limit of $f$. 

\begin{prop}[\cite{hedenmalm1995hilbert}]
Let $f$ be a Dirichlet series of the form (1), absolutely convergent in a half-plane. The vertical limit functions of $f$ are exactly the functions of the form $f_{ \chi} = \displaystyle{\sum_{n \geq 1}} a_n \chi(n) \, \e_n$ where $\chi \in \mathbb{T}^{ \infty}$.
\end{prop}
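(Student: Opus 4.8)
The plan is to realize each vertical translate of $f$ as one of the functions $f_\chi$ and then to use that the characters so obtained are dense in the compact group $\mathbb{T}^\infty$. For $\tau\in\mathbb{R}$ let $\chi_\tau$ be the character which on the primes is given by $\chi_\tau(p_j)=p_j^{-i\tau}$, so that $\chi_\tau(n)=n^{-i\tau}$ for every $n$ by multiplicativity; then
\[ f(s+i\tau)=\sum_{n\ge1}a_n n^{-i\tau}\,\e_n(s)=f_{\chi_\tau}(s). \]
Since $|\chi(n)|=1$ for every $\chi\in\mathbb{T}^\infty$, the Dirichlet series $f_\chi=\sum_n a_n\chi(n)\e_n$ has the same abscissa of absolute convergence $\sigma_a=\sigma_a(f)$ as $f$; in particular every $f_\chi$, and every $f_{\chi_\tau}$, is analytic on $\mathbb{C}_{\sigma_a}$, the half-plane on which vertical limits are formed.

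The technical core is the following continuity statement, which I would prove by an elementary $\varepsilon/3$ argument: if $\chi^{(k)}\to\chi$ in $\mathbb{T}^\infty$ (equivalently $\chi^{(k)}(n)\to\chi(n)$ for each fixed $n$), then $f_{\chi^{(k)}}\to f_\chi$ uniformly on compact subsets of $\mathbb{C}_{\sigma_a}$. Indeed, on a compact subset of $\{\Re s\ge\sigma_0\}$ with $\sigma_0>\sigma_a$, the tail $\sum_{n>N}|a_n|n^{-\sigma_0}$ is $<\varepsilon/3$ for $N$ large, and this bounds the corresponding tails of $f_{\chi^{(k)}}$ and of $f_\chi$ simultaneously, while the finite sum over $n\le N$ converges because $\chi^{(k)}(n)\to\chi(n)$ there.

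It then remains to prove the two inclusions. For "$\supseteq$": the logarithms $\log p_1,\log p_2,\dots$ are linearly independent over $\mathbb{Q}$ (unique factorization), so by Kronecker's theorem the curve $\tau\mapsto\chi_\tau$ is dense in $\mathbb{T}^\infty$; given $\chi\in\mathbb{T}^\infty$, pick $\tau_k$ with $\chi_{\tau_k}\to\chi$, and the continuity statement yields $f(\cdot+i\tau_k)\to f_\chi$ locally uniformly, so $f_\chi$ is a vertical limit. For "$\subseteq$": if $\tilde f$ is a vertical limit, say $f(\cdot+i\tau_k)\to\tilde f$ locally uniformly, then since $\mathbb{T}^\infty$ is compact (Tychonoff) and metrizable we may pass to a subsequence along which $\chi_{\tau_k}\to\chi\in\mathbb{T}^\infty$; the continuity statement gives $f(\cdot+i\tau_k)\to f_\chi$ along that subsequence, hence $\tilde f=f_\chi$.

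The part carrying the real weight is the density of $\{\chi_\tau:\tau\in\mathbb{R}\}$ in $\mathbb{T}^\infty$ — that is, Kronecker's theorem on simultaneous Diophantine approximation applied to the reals $\log p_1,\log p_2,\dots$. Once this is granted, everything else is a soft compactness-and-continuity routine resting on the $\varepsilon/3$ estimate above, together with the invariance of $\sigma_a$ under $n\mapsto\chi(n)$.
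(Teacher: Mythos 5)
Your proof is correct; the paper states this proposition without proof, citing \cite{hedenmalm1995hilbert}, and your argument --- identifying the translates with $f_{\chi_\tau}$ where $\chi_\tau(n)=n^{-i\tau}$, invoking Kronecker's theorem (via the $\mathbb{Q}$-linear independence of the $\log p_j$) to get density of $\{\chi_\tau\}$ in $\mathbb{T}^{\infty}$, and combining compactness of $\mathbb{T}^{\infty}$ with the uniform-tail continuity estimate on $\mathbb{C}_{\sigma_a}$ --- is essentially the standard proof given in that reference. The only point worth making explicit is that density in the infinite polytorus reduces to the finite-dimensional Kronecker theorem because the product topology constrains only finitely many coordinates at a time.
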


When $f \in \mathcal{A}_{\mu}^p$, the vertical limits have good properties:

\begin{prop}[\cite{bailleullefevre2013}]
Let $p \geq 1$ and $f \in \mathcal{A}_{\mu}^p$. For almost all $\chi \in \mathbb{T}^{ \infty}$, $f_{ \chi}$ converges on $\mathbb{C}_+$.
\end{prop}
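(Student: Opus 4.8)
The plan is to reduce the statement to the corresponding fact for the Hardy spaces $\mathcal{H}^{p}$ — namely, that if $g\in\mathcal{H}^{p}$ then $g_{\chi}$ converges on $\mathbb{C}_{+}$ for almost every $\chi\in\mathbb{T}^{\infty}$ (see \cite{hedenmalm1995hilbert} for $p=2$ and \cite{bayart2002hardy} for general $p$) — exploiting that $\mathcal{A}_{\mu}^{p}$ is obtained from $\mathcal{H}^{p}$ by a vertical averaging and that $0\in\mathrm{Supp}(\mu)$.

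First I would upgrade the defining identity $\|P\|_{\mathcal{A}_{\mu}^{p}}^{p}=\int_{0}^{+\infty}\|P_{\sigma}\|_{\mathcal{H}^{p}}^{p}\,d\mu(\sigma)$ from Dirichlet polynomials to all of $\mathcal{A}_{\mu}^{p}$: given $f\in\mathcal{A}_{\mu}^{p}$ (a genuine convergent Dirichlet series $\sum a_{n}\e_{n}$ on $\mathbb{C}_{1/2}$) and polynomials $P_{k}\to f$, the maps $\sigma\mapsto (P_{k})_{\sigma}$ form a Cauchy sequence in $L^{p}((0,+\infty),\mu;\mathcal{H}^{p})$, so a subsequence converges $\mu$-almost everywhere in $\mathcal{H}^{p}$; identifying the limit coefficient-wise — using that the coefficient functionals are bounded on both $\mathcal{A}_{\mu}^{p}$ and $\mathcal{H}^{p}$ — shows it equals $f_{\sigma}$ (the series with coefficients $a_{n}n^{-\sigma}$) for a.e. $\sigma$, and yields the identity. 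In particular $\int_{0}^{+\infty}\|f_{\sigma}\|_{\mathcal{H}^{p}}^{p}\,d\mu(\sigma)<+\infty$, so $f_{\sigma}\in\mathcal{H}^{p}$ for $\mu$-almost every $\sigma>0$; since $0\in\mathrm{Supp}(\mu)$, the set $E=\{\sigma>0:f_{\sigma}\in\mathcal{H}^{p}\}$ accumulates at $0$, and I would fix $\sigma_{j}\in E$ with $\sigma_{j}\downarrow 0$.

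Then for each $j$ I apply the $\mathcal{H}^{p}$ result to $g=f_{\sigma_{j}}$: there is a full-measure set $\Omega_{j}\subset\mathbb{T}^{\infty}$ on which $(f_{\sigma_{j}})_{\chi}$ converges on $\mathbb{C}_{+}$. The key (elementary) observation is that twisting by a character commutes with vertical translation, $(f_{\sigma_{j}})_{\chi}=(f_{\chi})_{\sigma_{j}}$ (both have coefficients $a_{n}\chi(n)n^{-\sigma_{j}}$); hence for $\chi\in\Omega_{j}$ the Dirichlet series $f_{\chi}$ converges on $\mathbb{C}_{\sigma_{j}}$. On the full-measure set $\Omega=\bigcap_{j}\Omega_{j}$, $f_{\chi}$ therefore converges on $\bigcup_{j}\mathbb{C}_{\sigma_{j}}=\mathbb{C}_{+}$ (each point of $\mathbb{C}_{+}$ lies in some $\mathbb{C}_{\sigma_{j}}$ since $\sigma_{j}\to 0$), which is the assertion.

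I expect the one genuinely technical point to be the extension of the norm identity from polynomials to arbitrary $f\in\mathcal{A}_{\mu}^{p}$; the rest is soft. For $p=2$ one can avoid even invoking the $\mathcal{H}^{p}$ result: writing $w(n)=\int_{0}^{+\infty}n^{-2\sigma}\,d\mu(\sigma)$ one has $\mathcal{A}_{\mu}^{2}=\{\sum a_{n}\e_{n}:\sum|a_{n}|^{2}w(n)<+\infty\}$, and $0\in\mathrm{Supp}(\mu)$ forces $n^{-2c}(\log n)^{2}=O(w(n))$ for every fixed $c>0$, so that $\sum|a_{n}|^{2}n^{-2c}(\log n)^{2}<+\infty$; since $\{\chi(n)\}_{n\ge 1}$ is an orthonormal system in $L^{2}(\mathbb{T}^{\infty},m)$, the Rademacher--Menchoff theorem (Menchoff's lemma) gives a.e. convergence of $\sum a_{n}\chi(n)n^{-c}$, and one concludes as above by letting $c\downarrow 0$ along a sequence and using that a Dirichlet series convergent at a point converges in the open half-plane to its right. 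The point to keep in mind here is that the plain $\ell^{2}$ estimate $\sum|a_{n}|^{2}n^{-2c}<+\infty$ would \emph{not} be enough for a general orthonormal system, so the logarithmic slack coming from $0\in\mathrm{Supp}(\mu)$ is essential.
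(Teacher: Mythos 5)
Your argument is correct, but note that the paper itself gives no proof of this Proposition: it is quoted verbatim from \cite{bailleullefevre2013}, so the only comparison possible is with that source, whose proof likewise reduces matters to the corresponding statement for the Hardy spaces. Your reduction is sound as written: the Cauchy argument in the Bochner space $L^p(\mu;\mathcal{H}^p)$ is legitimate (the defining norm identity on polynomials is exactly the definition of $\Vert\cdot\Vert_{\mathcal{A}_\mu^p}$), and the coefficient identification works because the functionals $f\mapsto a_n(f)$ are indeed bounded: $\vert a_n(P)\vert\,n^{-\sigma}=\vert a_n(P_\sigma)\vert\le\Vert P_\sigma\Vert_{\mathcal{H}^p}$, so $\vert a_n(P)\vert^p\int_0^{+\infty}n^{-p\sigma}\,d\mu(\sigma)\le\Vert P\Vert_{\mathcal{A}_\mu^p}^p$ with $\int_0^{+\infty}n^{-p\sigma}\,d\mu(\sigma)>0$. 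The passage from $f_{\sigma_j}\in\mathcal{H}^p$ on a set accumulating at $0$ (which uses $0\in\mathrm{Supp}(\mu)$) to convergence of $f_\chi$ on $\mathbb{C}_{\sigma_j}$, and then on $\mathbb{C}_+$ by intersecting countably many full-measure sets, is airtight. Be aware, though, that all of the genuine content for $1\le p<2$ is outsourced to the statement that $g\in\mathcal{H}^p$ implies a.e.\ convergence of $g_\chi$ on $\mathbb{C}_+$; this is not elementary (for $p<2$ one cannot argue from square-summable coefficients) and your proof stands or falls with that citation of \cite{bayart2002hardy}, which is the correct reference.

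Your self-contained $p=2$ variant is also correct and is essentially the Menchoff-type argument of \cite{hedenmalm1995hilbert} transplanted to the weighted setting: the bound $w(n)\ge\mu\big((0,c/2]\big)\,n^{-c}$ coming from $0\in\mathrm{Supp}(\mu)$ gives $\sum\vert a_n\vert^2 n^{-2c}\log^2(n)<+\infty$ for every $c>0$, and your remark that the logarithmic factor is indispensable for a general orthonormal system is exactly the right caveat. This route, however, does not cover $p\neq 2$, so for the Proposition as stated the reduction to the Hardy-space result remains the essential step.
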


Let $\Phi(s)=c_0s+ \varphi(s)$ where $c_0$ is a nonnegative integer and $\varphi \in \mathcal{D}$. For $\tau \in \mathbb{R}$, we define $\Phi_{\tau}$ by $\Phi_{\tau}(s)= c_0 s + \varphi(s + i \tau)$, then if $\chi \in \mathbb{T}^{\infty}$ the function $\Phi_{\chi}$ defined by 
\[ \Phi_{ \chi}(s) = c_0s + \varphi_{ \chi}(s) \]
is a vertical limit of the functions $(\Phi_{\tau})_{\tau \in \mathbb{R}}$. \\

In section $4$, we shall consider the Hilbert spaces of Dirichlet series $\mathcal{H}^2$ and $\mathcal{A}_{\mu}^2$ where $\mu$ is a probability measure on $(0, + \infty)$ such that $0 \in Supp( \mu)$. Moreover we shall assume that $d\mu( \sigma) = h( \sigma) d \sigma$ where $h$ is a nonnegative function such that $ \displaystyle{\Vert h \Vert_{ L^1((0, + \infty))}}=1$. 
We see easily that for an element of $ \mathcal{A}_{\mu}^2$ with form $(1)$:
\[ \Vert f \Vert_{ \mathcal{A}_{\mu}^2} =  \bigg{(} \sum_{n=1}^{ + \infty} \vert a_n \vert^2 w_h(n) \bigg{)}^{1/2} \]
where
\[ w_h(n) = \int_{0}^{ + \infty} n^{-2 \sigma} h( \sigma) d \sigma \quad \quad \quad \quad \forall n \geq 1. \]

\begin{exs}
We follow notations from \cite{bailleullefevre2013}: let $\alpha>-1$, we denote $\mu_{\alpha}$ the probability measure defined on $(0, + \infty)$ by
\[ d\mu_{\alpha}( \sigma) = \frac{2^{\alpha+1}}{\Gamma(\alpha+1)} \sigma^{\alpha} \exp(-2 \sigma) d \sigma.\]
We denote this space $\mathcal{A}_{\alpha}^2$ instead of $ \mathcal{A}_{\mu_{\alpha}}^2$ and the corresponding weight by $(w_n^{\alpha})$. Then for every $n \geq 1$,
\[ w_{n}^{\alpha} = \frac{1}{(\log(n)+1)^{\alpha+1}}. \]
\end{exs}

We shall denote $(\e_{n}^{\mu})_{n \geq 1}$ the orthonormal basis of $\mathcal{A}_{\mu}^2$ defined by
\[ \e_n^{\mu}:= \frac{\e_n}{\sqrt{w_h(n)}}. \]

\section{Boundedness of composition operators}
In the sequel $\mu$ will be a probability measure on $(0, + \infty)$ such that $0 \in Supp( \mu)$ (as in \cite{bailleullefevre2013}).

\begin{theoreme}{\label{Continuitécoplusgrandque1}} Let $ \Phi: \mathbb{C}_{ \frac{1}{2}} \rightarrow \mathbb{C}_{ \frac{1}{2}}$ be an analytic function of the form $ \Phi(s) =c_0s + \varphi(s)$ where $c_0 \geq 1$ and $ \varphi \in \mathcal{D}$. Then $C_{\Phi}$ is bounded on $\mathcal{A}_{\mu}^p$ if and only $ \varphi$ converges uniformly in $ \mathbb{C}_{\varepsilon}$ for every $ \varepsilon>0$ and $ \varphi( \mathbb{C}_+) \subset \mathbb{C}_+$ and in this case $C_{\Phi}$ is a contraction.
\end{theoreme}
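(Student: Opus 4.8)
The plan is to reduce the statement to the Hardy space case (the cited Gordon–Hedenmalm / Queffélec–Seip theorem) by exploiting the integral structure of the $\mathcal{A}_\mu^p$-norm together with the vertical translation trick. First I would establish sufficiency. Assume $\varphi$ converges uniformly on $\mathbb{C}_\varepsilon$ for every $\varepsilon>0$ and $\varphi(\mathbb{C}_+)\subset\mathbb{C}_+$. Since $c_0\ge1$, the cited theorem guarantees that $C_\Phi$ is a contraction on $\mathcal{H}^p$; more precisely, for any $\sigma>0$, one has $\Re(\Phi(s))\ge c_0\sigma+\Re(\varphi(s))>\sigma$ when $\Re(s)>\sigma$, so $\Phi$ maps $\mathbb{C}_\sigma$ into $\mathbb{C}_\sigma$ and the translated symbol $\Phi_{(\sigma)}(s):=\Phi(\sigma+s)-\sigma=c_0 s+(c_0-1)\sigma+\varphi(\sigma+s)$ again satisfies the Gordon–Hedenmalm conditions (its linear coefficient is $c_0\ge1$ and its perturbation maps $\mathbb{C}_+$ into $\mathbb{C}_+$). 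Hence for $P\in\mathcal{P}$,
\[
\Vert (P\circ\Phi)_{\sigma}\Vert_{\mathcal{H}^p}=\Vert P_{\sigma}\circ\Phi_{(\sigma)}\Vert_{\mathcal{H}^p}\le \Vert P_{\sigma}\Vert_{\mathcal{H}^p},
\]
where the first equality is the identity $(P\circ\Phi)(\sigma+s)=P(\sigma+\Phi_{(\sigma)}(s))=(P_\sigma\circ\Phi_{(\sigma)})(s)$. Integrating the $p$-th power against $d\mu(\sigma)$ gives $\Vert P\circ\Phi\Vert_{\mathcal{A}_\mu^p}\le\Vert P\Vert_{\mathcal{A}_\mu^p}$, so $C_\Phi$ extends to a contraction on $\mathcal{A}_\mu^p$; a density/closure argument (using that $\mathcal{P}$ is dense and the spaces consist of Dirichlet series convergent on $\mathbb{C}_{1/2}$) lets us pass from polynomials to all of $\mathcal{A}_\mu^p$.

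For necessity, suppose $C_\Phi$ is bounded on $\mathcal{A}_\mu^p$. I would test against the monomials $\e_n$ (or their normalizations $\e_n^\mu$) and against vertical limits. Because $0\in\mathrm{Supp}(\mu)$, the $\mathcal{A}_\mu^p$-norm controls the behaviour of $P_\sigma$ for $\sigma$ arbitrarily close to $0$; concretely, for fixed $\delta>0$ the measure $\mu$ gives positive mass to $(0,\delta)$, and on that range $\Vert P_\sigma\Vert_{\mathcal{H}^p}\ge \Vert P_\delta\Vert_{\mathcal{H}^p}$ fails in general, so instead I would use the pointwise estimate: evaluation at a point $s_0\in\mathbb{C}_{1/2}$ is bounded on $\mathcal{H}^p$ (hence on $\mathcal{A}_\mu^p$) with a constant depending only on $\Re(s_0)$, and this forces $\Phi$ to map $\mathbb{C}_{1/2}$ into $\mathbb{C}_{1/2}$ with the quantitative control needed. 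The key point is to recover $\varphi(\mathbb{C}_+)\subset\mathbb{C}_+$: if some $w_0\in\mathbb{C}_+$ had $\Re(\varphi(w_0))\le0$, then for $\sigma$ small the point $\Phi(\sigma+w_0)$ would have real part $\le c_0\sigma+\Re(\varphi(w_0+\cdots))$ which can be pushed below $1/2$ — contradicting that $C_\Phi f=f\circ\Phi$ must be a genuine element of $\mathcal{A}_\mu^p$, i.e. a Dirichlet series converging on $\mathbb{C}_{1/2}$, for every $f\in\mathcal{A}_\mu^p$. The uniform convergence of $\varphi$ on each $\mathbb{C}_\varepsilon$ then follows exactly as in the Hardy space argument of Gordon–Hedenmalm: one applies $C_\Phi$ to $\e_n=n^{-s}$, observes that $n^{-\Phi(s)}=n^{-c_0 s}\,n^{-\varphi(s)}$ must lie in $\mathcal{A}_\mu^p$, and deduces that $n^{-\varphi(s)}\in\mathcal{H}^\infty$-type bounds hold uniformly, whence $\varphi\in\mathcal{H}^\infty(\mathbb{C}_\varepsilon)$ and by Bohr's theorem $\sigma_u(\varphi)\le\varepsilon$ for every $\varepsilon>0$.

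The main obstacle I anticipate is the necessity direction, specifically transferring the Hardy-space necessity argument through the integral $\int_0^\infty\Vert P_\sigma\Vert_{\mathcal{H}^p}^p\,d\mu(\sigma)$ without losing the geometric information near $\sigma=0$. The delicate step is that a bound on $\Vert f\circ\Phi\Vert_{\mathcal{A}_\mu^p}$ is an averaged statement, so to extract a pointwise conclusion about $\Phi$ on $\mathbb{C}_{1/2}$ I will need the uniform (in $\sigma$ over compact subsets of $(0,\infty)$, and with a usable blow-up rate as $\sigma\to0$) boundedness of point evaluations on $\mathcal{H}^p$, combined with the hypothesis $0\in\mathrm{Supp}(\mu)$ to guarantee that no neighbourhood of $0$ is $\mu$-null. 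Once the point-evaluation transfer is set up, the rest is a matter of repeating the classical normal-families and Bohr-theorem arguments; I would also double-check that the closure procedure defining $\mathcal{A}_\mu^p$ is compatible with composition, i.e. that $C_\Phi$ defined on $\mathcal{P}$ really does extend continuously and that the extension agrees with $f\mapsto f\circ\Phi$ on the dense subspace — this uses Proposition 2 (almost-everywhere convergence of vertical limits) to identify the abstract completion with honest Dirichlet series.
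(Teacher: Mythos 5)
Your sufficiency argument is essentially the paper's proof: translate the symbol to $\widetilde{\Phi}(s)=\Phi(\sigma+s)-\sigma$, check that $\widetilde{\varphi}(s)=\varphi(\sigma+s)+(c_0-1)\sigma$ maps $\mathbb{C}_+$ into $\mathbb{C}_+$, apply the Gordon--Hedenmalm/Queff\'elec--Seip contraction on $\mathcal{H}^p$ for each $\sigma$, integrate against $d\mu$, and conclude by density plus boundedness of point evaluations on $\mathbb{C}_{1/2}$ to identify the extension with $f\mapsto f\circ\Phi$. That half is correct (modulo noting, as the paper does, that $P\circ\Phi\in\mathcal{H}^\infty$ so that the Littlewood--Paley/translation formula for the $\mathcal{A}_\mu^p$-norm applies).

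The necessity direction, however, has a genuine gap. Your proposed contradiction --- ``if $\Re(\varphi(w_0))\le 0$ for some $w_0\in\mathbb{C}_+$, then $\Phi(\sigma+w_0)$ can be pushed to have real part below $1/2$, contradicting that $f\circ\Phi$ is a Dirichlet series converging on $\mathbb{C}_{1/2}$'' --- does not work. Membership of $f\circ\Phi$ in $\mathcal{A}_\mu^p$ only constrains its values on $\mathbb{C}_{1/2}$, and $\Phi$ is only assumed defined there; for $w_0$ with $0<\Re(w_0)\le 1/2$ the point $\sigma+w_0$ need not even lie in the domain of $\Phi$, and for $\Re(w_0)>1/2$ the hypothesis $\Phi(\mathbb{C}_{1/2})\subset\mathbb{C}_{1/2}$ is perfectly compatible with $\Re(\varphi(w_0))\le 0$ when $c_0\ge 2$ or $\Re(w_0)$ is large. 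The whole difficulty is to control $\Phi$ (after extending $\varphi$) in the strip $0<\Re(s)\le 1/2$, where point evaluations on $\mathcal{A}_\mu^p$ give no information. The paper's necessity proof does this via the vertical-limit machinery: the identity $(f\circ\Phi)_\chi=f_{\chi^{c_0}}\circ\Phi_\chi$, the fact that vertical limits of $\mathcal{A}_\mu^p$-functions converge a.s.\ on $\mathbb{C}_+$, and, crucially, Bayart's function $f=\sum_{n\ge 1}\e_{p_n}/(\sqrt{p_n}\log p_n)\in\mathcal{H}^p\subset\mathcal{A}_\mu^p$ whose vertical limits a.s.\ admit no analytic extension beyond $\mathbb{C}_+$; if some $\Phi_\chi$ hit the imaginary axis (at a point of nonvanishing derivative, chosen by connectedness), $(\Delta)$ would produce such an extension, a contradiction. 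This forces $\Phi_\chi(\mathbb{C}_+)\subset\mathbb{C}_+$ a.s., hence $\Phi(\mathbb{C}_+)\subset\mathbb{C}_+$, and only then does Th.\ 3.1 of Queff\'elec--Seip yield both the uniform convergence of $\varphi$ on every $\mathbb{C}_\varepsilon$ and $\varphi(\mathbb{C}_+)\subset\mathbb{C}_+$. Your alternative of applying $C_\Phi$ to $\e_n$ and invoking ``$\mathcal{H}^\infty$-type bounds'' is likewise insufficient as stated: $n^{-\Phi}\in\mathcal{A}_\mu^p$ gives only norm bounds tied to $\mathbb{C}_{1/2}$, not sup bounds on $\mathbb{C}_\varepsilon$ for $\varepsilon<1/2$, so the Bohr-theorem step you want cannot be launched without first carrying out the vertical-limit/non-extendability argument that your sketch omits.
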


\begin{rem}
The condition $\Phi(s)=c_0s + \varphi(s)$ where $c_0$ is a nonnegative integer is an arithmetical condition: $\Phi$ has this form if and only if $P \circ \Phi \in \mathcal{D}$ for every $P \in \mathcal{P}$ (see \cite{gordon1999composition},Th.A).
\end{rem}

%

\begin{proof}
$\hbox{  }$

\noindent $\rhd$ Assume that $\varphi$ converges uniformly on $ \mathbb{C}_{\varepsilon}$ for every $\varepsilon>0$ and that \linebreak $ \varphi( \mathbb{C}_+) \subset \mathbb{C}_+$. Let $P$ be a Dirichlet polynomial, $P \circ \Phi$ is then a Dirichlet series which is bounded on $\mathbb{C}_+$ (because $\Phi( \mathbb{C}_+) \subset \mathbb{C}_+)$ and so $P \circ \Phi$ belongs to $\mathcal{H}^{ \infty}$ and consequently to  $\mathcal{H}^p$. By \cite{bailleullefevre2013},Th.6(ii) we know that
\[ \Vert P \circ \Phi \Vert_{A_{\mu}^p}^p = \int_{0}^{ + \infty} {\Vert (P \circ \Phi)_{ \sigma} \Vert}_{ \mathcal{H}^p}^p  \, d \mu( \sigma). \]

Now, the key point is to use the boundedness of the composition operators on $ \mathcal{H}^p$. We remark that for each $\sigma>0$,
\[ (P \circ \Phi)_{ \sigma} (it) = P ( \Phi ( \sigma+it)) = P_{ \sigma} ( \Phi( \sigma +it) -  \sigma). \]

Let $\widetilde{\Phi}$ defined on $ \mathbb{C}_{+}$ by  $\widetilde{\Phi}(s) = \Phi( \sigma+s) -\sigma$. We claim that $\widetilde{\Phi}$ verifies the condition of the boundedness of $C_{\widetilde{\Phi}}$ on $ \mathcal{H}^p$: it can be written $\widetilde{\Phi}(s)= c_0s + \widetilde{\varphi}(s)$ where $ \widetilde{\varphi}$ converges uniformly on $\mathbb{C}_{\varepsilon}$ for every $\varepsilon>0$. Then it suffices to check that $ \widetilde{\varphi}( \mathbb{C}_+) \subset \mathbb{C}_+$ but $\widetilde{ \varphi}(s) = \varphi(s + \sigma ) +(c_0-1)\sigma$ for every $s \in \mathbb{C}_+$ and the conclusion is clear because $c_0 \geq 1$ and $ \varphi( \mathbb{C}_+) \subset \mathbb{C}_+$.

Now we apply the result on the boundedness on $ \mathcal{H}^p$. Since $c_0 \geq 1$, $C_{ \widetilde{\Phi}}$ is a contraction on $\mathcal{H}^p$ and we obtain ($\sigma$ is fixed):
\[ {\Vert (P \circ \Phi)_{ \sigma} \Vert}_{ \mathcal{H}^p}^p = {\Vert P_{ \sigma} (\widetilde{\Phi} ) \Vert}_{ \mathcal{H}^p}^p \leq {\Vert P_{ \sigma} \Vert}_{ \mathcal{H}^p}^p. \]
This is true for every $ \sigma > 0$, so
\[ \int_{0}^{ + \infty} {\Vert (P \circ \Phi)_{ \sigma} \Vert}_{ \mathcal{H}^p}^p  \, d \mu( \sigma) \leq    \int_{0}^{ + \infty} {\Vert P_{ \sigma} \Vert}_{ \mathcal{H}^p}^p  \, d \mu( \sigma). \]

Hence $\Vert P \circ \Phi \Vert_{ \mathcal{A}_{\mu}^p} \leq \Vert P \Vert_{ \mathcal{A}_{\mu}^p}$. Now, by density of $\mathcal{P}$ in $ \mathcal{A}_{\mu}^p$, $C_{\Phi}: \mathcal{P} \cap \mathcal{A}_{\mu}^p \rightarrow \mathcal{A}_{\mu}^p$ extends to a bounded operator $T: \mathcal{A}_{\mu}^p \rightarrow \mathcal{A}_{\mu}^p$ such that $T(P)= P \circ \Phi$ for every $P \in \mathcal{P}$. Let $f \in \mathcal{A}_{\mu}^p$, there exists a sequence $(P_n) \subset \mathcal{P}$ such that $(P_n)$ converges to $f$ in norm, then by continuity of the point evaluation at $s \in \mathbb{C}_{1/2}$ on $ \mathcal{A}_{\mu}^p$ (see \cite{bailleullefevre2013},Th.1):
\[ \vert T(f)(s) - T(P_n)(s) \vert \leq \Vert \delta_s \Vert_{(\mathcal{A}_{\mu}^p)^*} \Vert T(f) - T(P_n) \Vert_{ \mathcal{A}_{\mu}^p} \]
\[ \leq \Vert \delta_s \Vert_{(\mathcal{A}_{\mu}^p)^*} \Vert T \Vert \, \Vert f - P_n \Vert_{ \mathcal{A}_{\mu}^p} \]
and then $T(P_n)(s) = P_n( \Phi(s))$ converges to $T(f)(s)$ when $n$ goes to infinity. But by evaluation at $\Phi(s) \in \mathbb{C}_{1/2}$, $P_n( \Phi(s))$ converges to $f( \Phi(s))$, so $T(f)(s)= f( \Phi(s)) = C_{\Phi}(f)(s)$ and the result is proved. \\


$\rhd$ Now assume that $\Phi$ induces a bounded composition operator on $\mathcal{A}_{\mu}^p$. The result follows from ideas similar to the ones of the proof of Th.B from \cite{gordon1999composition}. We only give a sketch of the proof.

Let $f \in \mathcal{A}_{\mu}^p$. For every $\chi \in \mathbb{T}^{ \infty}$, we can show that (see \cite{gordon1999composition}, Prop4.3) for every $s \in \mathbb{C}_{1/2}$,
\[ (f \circ \Phi)_{\chi}(s) = f_{ \chi^{c_0}} \circ \Phi_{\chi}(s).  \quad \quad \quad \quad \quad (\Delta)\]
Since $f$ and $f \circ \Phi$ belong to $\mathcal{A}_{\mu}^p$, $(f \circ \Phi)_{\chi}$ and $f_{\chi^{c_0}}$ extend analytically  on $ \mathbb{C}_{+}$ almost-surely relatively to $\chi$ . As in Prop5.1 from \cite{gordon1999composition}, we can show that $\Phi_{\chi}$ extends analytically on $ \mathbb{C}_+$ for every $ \chi \in \mathbb{T}^{\infty}$.

Now, let $\chi \in \mathbb{T}^{\infty}$ such that $f_{\chi^{c_0}}$, $\Phi_{\chi}$ and $(f \circ \Phi)_{\chi}$ extend on $ \mathbb{C}_+$. 

If $\Phi_{\chi}$ does not map $\mathbb{C}_+$ to $ \mathbb{C}_+$, then there exists $s_0 \in  \mathbb{C}_+$ such that $\Phi_{\chi}(s_0)$ is in the imaginary axis and with an argument of connectedness, we can choose it such that $\Phi_{\chi}^{'}(s_0) \neq 0$. Then, by $(\Delta)$, $f_{\chi^{c_0}}$ has an analytic extension on a small segment on the imaginary axis near $\Phi_{\chi}(s_0)$. We denote $(p_n)_{n \geq 1}$ the sequence of prime numbers. In \cite{bayart2002hardy} it is shown that 
\[ f = \sum_{n \geq 1} \frac{e_{p_n}}{\sqrt{p_n} \log(p_n)} \in \mathcal{H}^p  \]
and that almost surely relatively to $\chi$, $f_{\chi^{c_0}}$ does not extend analytically  to any region larger than $\mathbb{C}_{+}$. But $f \in \mathcal{A}_{\mu}^p$ because $ \mathcal{H}^p \subset \mathcal{A}_{\mu}^p$. Consequently $\Phi_{\chi}$ has to map $\mathbb{C}_+$ to $\mathbb{C}_+$ almost surely relatively to $\chi$ and then by Proposition 4.1 from \cite{gordon1999composition}, $\Phi( \mathbb{C}_+) \subset \mathbb{C}_+$. Finally, it suffices to apply Th3.1  from \cite{queffelec2013approximation} to obtain that $ \varphi( \mathbb{C}_+) \subset \mathbb{C}_+$ and $\varphi$ converges uniformly on $ \mathbb{C}_{\varepsilon}$ for all $\varepsilon>0$.
\end{proof}

\begin{rem}
In the previous proof we used that $\Phi_u -u: \mathbb{C}_+ \rightarrow \mathbb{C}_+$ when $c_0 \geq 1$. We can see this result as an equivalent of the Schwarz's lemma in this framework.
\end{rem}

When $c_0 = 0$, we obtain the following result.

\begin{theoreme}{\label{Continuitécoegal0}} Let $p \geq 1$ and $ \Phi : \mathbb{C}_{ \frac{1}{2}} \rightarrow \mathbb{C}_{ \frac{1}{2}}$ be an analytic function belonging to $\mathcal{D}$. Then
\begin{enumerate}[(i)]
\item
If $C_{\Phi}$ is bounded on $\mathcal{A}_{\mu}^p$ then $ \Phi$ converges uniformly on $ \mathbb{C}_{\varepsilon}$ for every $ \varepsilon>0$ and $ \Phi( \mathbb{C}_+) \subset \mathbb{C}_{1/2}$.
\item
If  $\Phi$ converges uniformly on $ \mathbb{C}_{\varepsilon}$ for every $ \varepsilon>0$ and $ \Phi( \mathbb{C}_+) \subset \mathbb{C}_{1/2+ \eta}$ (where $\eta>0$) then $C_{\Phi}$ is Hilbert-Schmidt on $\mathcal{A}_{\mu}^2$, a fortiori bounded.
\item
If $C_{\Phi}$ is bounded on $\mathcal{A}_{\mu}^2$, then it is bounded on $\mathcal{A}_{\mu}^{2k}$ for any $k \geq 1$. Consequently $C_{\Phi}$ is bounded on $\mathcal{A}_{\mu}^{2k}$ for any $k \geq 1$ under conditions of $(ii)$.
\end{enumerate}
\end{theoreme}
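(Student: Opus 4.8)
The plan is to handle the three items separately: item (i) by rerunning the necessity half of the proof of Theorem~\ref{Continuitécoplusgrandque1} with $c_0=0$, item (ii) by an explicit Hilbert--Schmidt estimate on the orthonormal basis $(\e_n^\mu)_{n\ge1}$, and item (iii) by the norm identity $\Vert g\Vert_{\mathcal A_\mu^{2k}}^k=\Vert g^k\Vert_{\mathcal A_\mu^2}$ together with the boundedness on $\mathcal A_\mu^2$.

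For (i), I would follow the second part of the proof of Theorem~\ref{Continuitécoplusgrandque1}, now with $c_0=0$, so that the identity $(\Delta)$ becomes $(f\circ\Phi)_\chi=f\circ\Phi_\chi$ for every $\chi\in\mathbb{T}^\infty$. Starting from $f\in\mathcal A_\mu^p$ with $f\circ\Phi\in\mathcal A_\mu^p$: both $f\circ\Phi$ and $(f\circ\Phi)_\chi$ extend analytically to $\mathbb{C}_+$ almost surely (Proposition of \cite{bailleullefevre2013}), and, as in Proposition~5.1 of \cite{gordon1999composition}, $\Phi_\chi$ extends analytically to $\mathbb{C}_+$ for every $\chi$. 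If $\Phi(\mathbb{C}_+)$ were not contained in $\mathbb{C}_{1/2}$, then by Proposition~4.1 of \cite{gordon1999composition} the set of $\chi$ with $\Phi_\chi(\mathbb{C}_+)\not\subset\mathbb{C}_{1/2}$ would have full measure, so for a.e. $\chi$ a connectedness argument would produce $s_0\in\mathbb{C}_+$ with $\Re\Phi_\chi(s_0)=\frac{1}{2}$ and $\Phi_\chi'(s_0)\neq0$, and then $(\Delta)$ rewritten as $f=(f\circ\Phi)_\chi\circ\Phi_\chi^{-1}$ near $\Phi_\chi(s_0)$ would continue $f$ analytically across a small arc of $\{\Re s=\frac{1}{2}\}$; choosing for $f$ a Dirichlet series of $\mathcal H^p\subset\mathcal A_\mu^p$ whose maximal holomorphy domain is exactly $\mathbb{C}_{1/2}$, i.e. having $\{\Re s=\frac{1}{2}\}$ as a natural boundary (such functions exist, cf. \cite{bayart2002hardy}), yields a contradiction. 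Hence $\Phi(\mathbb{C}_+)\subset\mathbb{C}_{1/2}$, and Th.3.1 of \cite{queffelec2013approximation} then gives the uniform convergence of $\Phi$ on every $\mathbb{C}_\varepsilon$. This is the step I expect to require genuine care: one must check that the Gordon--Hedenmalm machinery (normal families, the choice $\Phi_\chi'(s_0)\neq0$, Propositions~4.1 and~5.1) transfers from $\mathcal H^p$ to the strictly larger space $\mathcal A_\mu^p$, and must pin down the right test function singular on the critical line --- it should go through because $\mathcal H^p\subset\mathcal A_\mu^p$, the point evaluations of $\mathcal A_\mu^p$ at points of $\mathbb{C}_{1/2}$ are bounded (\cite{bailleullefevre2013},Th.1), and vertical limits of $\mathcal A_\mu^p$ functions still converge on $\mathbb{C}_+$ almost surely.

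For (ii), since $(\e_n^\mu)_{n\ge1}$ with $\e_n^\mu=\e_n/\sqrt{w_h(n)}$ is an orthonormal basis of $\mathcal A_\mu^2$, I would estimate directly the quantity $\sum_{n\ge1}\Vert\e_n^\mu\circ\Phi\Vert_{\mathcal A_\mu^2}^2=\sum_{n\ge1}w_h(n)^{-1}\Vert\e_n\circ\Phi\Vert_{\mathcal A_\mu^2}^2$ (each $\e_n\circ\Phi=n^{-\Phi}$ being a bounded Dirichlet series, since $\Re\Phi\ge\frac{1}{2}$ on $\mathbb{C}_+$, hence an element of $\mathcal A_\mu^2$); if it is finite, $P\mapsto P\circ\Phi$ extends to a Hilbert--Schmidt operator on $\mathcal A_\mu^2$. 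For fixed $\sigma>0$ the map $\Phi(\sigma+\cdot)$ sends $\mathbb{C}_+$ into $\Phi(\mathbb{C}_\sigma)\subset\Phi(\mathbb{C}_+)\subset\mathbb{C}_{1/2+\eta}$, hence $\Re\Phi(\sigma+s)\ge\frac{1}{2}+\eta$ for $\Re s>0$ and
\[ \Vert(\e_n\circ\Phi)_\sigma\Vert_{\mathcal H^2}=\Vert n^{-\Phi(\sigma+\cdot)}\Vert_{\mathcal H^2}\le\Vert n^{-\Phi(\sigma+\cdot)}\Vert_\infty=\sup_{\Re s>0}n^{-\Re\Phi(\sigma+s)}\le n^{-(1/2+\eta)}, \]
so integrating against the probability measure $\mu$ gives $\Vert\e_n\circ\Phi\Vert_{\mathcal A_\mu^2}^2\le n^{-1-2\eta}$. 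On the other hand, fixing $\delta$ with $0<\delta<\eta$, the hypothesis $0\in Supp(\mu)$ yields $\mu((0,\delta))>0$ and $w_h(n)=\int_0^{+\infty}n^{-2\sigma}h(\sigma)\,d\sigma\ge\int_0^\delta n^{-2\sigma}h(\sigma)\,d\sigma\ge\mu((0,\delta))\,n^{-2\delta}$; therefore
\[ \sum_{n\ge1}\frac{\Vert\e_n\circ\Phi\Vert_{\mathcal A_\mu^2}^2}{w_h(n)}\le\frac{1}{\mu((0,\delta))}\sum_{n\ge1}n^{2\delta-1-2\eta}<+\infty. \]
Thus $C_\Phi$ is Hilbert--Schmidt on $\mathcal A_\mu^2$, the bounded point evaluations identifying this extension with $f\mapsto f\circ\Phi$ as in the first part of the proof of Theorem~\ref{Continuitécoplusgrandque1}. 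This part is routine once one uses $\Vert\cdot\Vert_{\mathcal H^2}\le\Vert\cdot\Vert_\infty$; the only points to watch are the uniformity of the estimate in $\sigma$ and the lower bound for $w_h(n)$ coming from $0\in Supp(\mu)$.

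For (iii), I would first note that for a Dirichlet polynomial $g$ and $\sigma>0$ the Bohr lift $D$ is multiplicative and $\Vert\cdot\Vert_{\mathcal H^{2k}}=\Vert D(\cdot)\Vert_{L^{2k}(\mathbb{T}^\infty)}$, so $\Vert g_\sigma\Vert_{\mathcal H^{2k}}^{2k}=\int_{\mathbb{T}^\infty}|D(g_\sigma)|^{2k}\,dm=\int_{\mathbb{T}^\infty}|D((g^k)_\sigma)|^2\,dm=\Vert(g^k)_\sigma\Vert_{\mathcal H^2}^2$; integrating in $\sigma$ against $\mu$ gives $\Vert g\Vert_{\mathcal A_\mu^{2k}}^k=\Vert g^k\Vert_{\mathcal A_\mu^2}$ for every $g\in\mathcal P$. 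Applying this to $g=P$ and $g=P\circ\Phi$, and using $(P\circ\Phi)^k=P^k\circ\Phi$ together with the boundedness of $C_\Phi$ on $\mathcal A_\mu^2$,
\[ \Vert P\circ\Phi\Vert_{\mathcal A_\mu^{2k}}^k=\Vert P^k\circ\Phi\Vert_{\mathcal A_\mu^2}\le\Vert C_\Phi\Vert_{\mathcal A_\mu^2\to\mathcal A_\mu^2}\,\Vert P^k\Vert_{\mathcal A_\mu^2}=\Vert C_\Phi\Vert_{\mathcal A_\mu^2\to\mathcal A_\mu^2}\,\Vert P\Vert_{\mathcal A_\mu^{2k}}^k, \]
so $C_\Phi$ is bounded on $\mathcal A_\mu^{2k}$ after extending by density of $\mathcal P$ and identifying the extension through point evaluations. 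The last assertion then follows by combining this with (ii), which already gives boundedness (indeed Hilbert--Schmidtness) on $\mathcal A_\mu^2$ under the hypotheses stated there.
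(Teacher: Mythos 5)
Your proposal is correct and follows the same overall architecture as the paper: for (i) a rerun of the necessity half of Gordon--Hedenmalm's Theorem B (the paper itself only says this "follows from \cite{gordon1999composition}, Th.B and some easy adaptations", and your sketch is precisely that adaptation, with the right test function --- an $\mathcal{H}^p\subset\mathcal{A}_{\mu}^p$ series having $\Re s=\tfrac12$ as natural boundary); for (ii) a Hilbert--Schmidt estimate on the orthonormal basis $(\e_n^{\mu})$; for (iii) the identity $\Vert P\circ\Phi\Vert_{\mathcal{A}_{\mu}^{2k}}=\Vert P^k\circ\Phi\Vert_{\mathcal{A}_{\mu}^2}^{1/k}$ plus density and point evaluations, exactly as in the paper. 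The only genuine divergence is inside (ii), where you replace two of the paper's citations by more elementary arguments: instead of Bayart's Lemma 5 together with Proposition 4.1 of \cite{gordon1999composition} applied to the vertical limits $\Phi_{\chi}$, you bound $\Vert n^{-\Phi_{\sigma}}\Vert_{\mathcal{H}^2}\le\Vert n^{-\Phi_{\sigma}}\Vert_{\infty}\le n^{-1/2-\eta}$ directly (valid since $\mathcal{H}^{\infty}\subset\mathcal{H}^2$ contractively); and instead of invoking \cite{mccarthy2004hilbert} for the fact that $(w_h(n))$ decays more slowly than any negative power of $n$, you derive the lower bound $w_h(n)\ge\mu((0,\delta))\,n^{-2\delta}$ for $0<\delta<\eta$ straight from $0\in Supp(\mu)$. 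Both substitutions are correct, give the same summable majorant $n^{2\delta-1-2\eta}$, and make the argument more self-contained, at the small price of having to justify (as you implicitly do via the Bohr lift and \cite{bailleullefevre2013}, Th.6(ii)) that $n^{-\Phi}$ is a bounded Dirichlet series and that its $\mathcal{A}_{\mu}^2$-norm is computed by integrating the translates.
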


\begin{proof} $\hbox{  }$

$(i)$ As in the case $c_0 \geq 1$, the proof follows from \cite{gordon1999composition}, Th.B and some easy adaptations.

$(ii)$ By Lemma $5$ from \cite{bayart2002hardy} we have:
\[ \Vert n^{- \Phi_{\sigma}} \Vert_{\mathcal{H}^2}^2 = \int_{\mathbb{T}^{\infty}} \int_{0}^{1} \vert n^{-\Phi_{\chi}(\sigma+it)} \vert^2 dt dm(\chi) \quad \quad \quad \quad \forall n \geq 1.\]
Now $\Phi : \mathbb{C}_+ \rightarrow \mathbb{C}_{1/2+\eta}$ and then by Proposition 4.1 from \cite{gordon1999composition}, the same holds for $\Phi_{\chi}$ for every $\chi \in \mathbb{T}^{\infty}$. Consequently
\[ \Vert n^{- \Phi_{\sigma}} \Vert_{\mathcal{H}^2}^2 \leq n^{-1-2 \eta} \quad \quad \quad \quad \forall n \geq 1. \]
Now we show that $C_{\Phi}$ is Hilbert-Schmidt and then obviously bounded. The sequence $(\e_{n}^{\mu})_{n \geq 1}$ is an orthonormal basis of $\mathcal{A}_{\mu}^2$ and $C_{\Phi}(\e_n^{\mu}) \in \mathcal{H}^{\infty}$ for every $n \geq 1$: indeed $\e_n^{\mu} \in \mathcal{P}$ and $\Phi( \mathbb{C}_+) \subset \mathbb{C}_+$. Then by \cite{bailleullefevre2013}, Th.6(ii):
\[ \sum_{n=1}^{+ \infty} \Vert C_{\Phi}(\e_{n}^{\mu}) \Vert_{ \mathcal{A}_{\mu}^2}^2 =  \sum_{n=1}^{+ \infty} \int_{0}^{+ \infty} \Vert (C_{\Phi}(\e_n^{\mu}))_{\sigma} \Vert_{\mathcal{H}^{2}}^2 d\mu(\sigma)\]
\[ =  \sum_{n=1}^{+ \infty} \int_{0}^{+ \infty} \frac{\Vert n^{-\Phi_{\sigma}}  \Vert_{\mathcal{H}^{2}}^2}{w_h(n)}  d\mu(\sigma) \leq \sum_{n=1}^{+ \infty} \int_{0}^{+ \infty} \frac{n^{-1-2 \eta}}{w_h(n)}  d\mu(\sigma) \]
\[ = \sum_{n=1}^{+ \infty} \frac{n^{-1-2\eta}}{w_h(n)}.\]
Now in \cite{mccarthy2004hilbert}, it is shown that the weight $(w_h(n))$ decreases more slowly than any negative power of $n$ so there exists $C>0$ such that $w_h(n) \geq Cn^{- \eta}$ for every $n \geq 1$, then
\[  \sum_{n=1}^{+ \infty} \Vert C_{\Phi}(\e_{n}^{\mu}) \Vert_{ \mathcal{A}_{\mu}^2}^2 \leq \frac{1}{C} \sum_{n=1}^{+\infty} \frac{1}{n^{1+\eta}} < +\infty \]
and so $C_{\Phi}$ is Hilbert-Schmidt.

$(iii)$ Assume that $C_{\Phi}$ is bounded on $\mathcal{A}_{\mu}^2$. Let $P$ be a Dirichlet polynomial, we get
\[ \Vert P \circ \Phi \Vert_{\mathcal{A}_{\mu}^{2k}} = \Vert P^k \circ \Phi \Vert_{\mathcal{A}_{\mu}^2}^{1/k}  \leq \Vert C_{\Phi} \Vert^{1/k} \Vert P^k \Vert_{\mathcal{A}_{\mu}^2}^{1/k} = \Vert C_{\Phi} \Vert^{1/k} \Vert P \Vert_{ \mathcal{A}_{\mu}^{2k}}. \]
By density of the polynomials and boundedness of the point evaluation we obtain the result (as in the proof of Theorem \ref{Continuitécoplusgrandque1}).
\end{proof}
 
\begin{qu}
$\hbox{  }$

Is it true that $C_{\Phi}$ is actually nuclear?

Can we choose $\eta=0$ in $(ii)$ to get boundedness? 
\end{qu} 

\section{Compactness and Nevanlinna counting function} 
Let $p \geq 1$ and $X= \mathcal{H}^p$ or $\mathcal{A}_{\mu}^p$. We begin this section with the following criterion of compactness:

\begin{proposition}
Let $\Phi : \mathbb{C}_{1/2} \rightarrow \mathbb{C}_{1/2}$ be an analytic function which induces a bounded composition operator on $X$. Then $C_{\Phi}$ is compact on $X$ if and only if for every bounded sequence $(f_n)$ in $X$ which converges to $0$ uniformly on $\mathbb{C}_{1/2+ \varepsilon}$ for every $\varepsilon>0$, one has $\Vert C_{\Phi}(f_n) \Vert_{X} \rightarrow 0$.
\end{proposition}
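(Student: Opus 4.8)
The plan is to prove the two implications separately, with the nontrivial direction being sufficiency. For the easy direction, suppose $C_\Phi$ is compact on $X$ and let $(f_n)$ be a bounded sequence in $X$ converging to $0$ uniformly on every $\mathbb{C}_{1/2+\varepsilon}$. By compactness, every subsequence of $(C_\Phi f_n)$ has a further subsequence converging in norm in $X$; I would argue that any such norm limit must be $0$. Indeed, if $C_\Phi f_{n_k} \to g$ in $X$, then by continuity of the point evaluations $\delta_s$ on $X$ (which holds for $\mathcal{H}^p$ and, by \cite{bailleullefevre2013},Th.1, for $\mathcal{A}_\mu^p$) we get $f_{n_k}(\Phi(s)) \to g(s)$ for each $s \in \mathbb{C}_{1/2}$. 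But $\Phi(s) \in \mathbb{C}_{1/2}$, so we can find $\varepsilon>0$ with $\Re(\Phi(s)) > 1/2+\varepsilon$, and then $f_{n_k}(\Phi(s)) \to 0$ by hypothesis; hence $g \equiv 0$. Since every subsequence has a further subsequence with norm limit $0$, the whole sequence satisfies $\Vert C_\Phi f_n\Vert_X \to 0$.

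For the converse, suppose the stated sequential condition holds; I want to show $C_\Phi$ is compact. Let $(f_n)$ be any sequence in the unit ball of $X$. The key step is to extract a subsequence that converges uniformly on compact subsets (equivalently on each $\mathbb{C}_{1/2+\varepsilon}$) to some $f \in X$. This follows from a normal-families argument: the unit ball of $X$ is bounded in the sup norm on each $\mathbb{C}_{1/2+\varepsilon}$ because the point evaluations are uniformly bounded on such half-planes — from $|g(s)| \le \Vert\delta_s\Vert \Vert g\Vert_X$ and a bound on $\Vert\delta_s\Vert$ that is uniform for $\Re(s)\ge 1/2+\varepsilon$ — so Montel's theorem gives a subsequence converging locally uniformly to some analytic $f$, and a standard weak-compactness or Fatou-type argument shows $f \in X$ with $\Vert f\Vert_X \le 1$ (using reflexivity when $p>1$, or the coefficient description and Fatou's lemma). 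Then $g_n := f_{n_k} - f$ is bounded in $X$ and converges to $0$ uniformly on every $\mathbb{C}_{1/2+\varepsilon}$, so by hypothesis $\Vert C_\Phi g_n\Vert_X \to 0$, i.e. $C_\Phi f_{n_k} \to C_\Phi f$ in norm. Hence $C_\Phi$ maps bounded sequences to sequences with norm-convergent subsequences, so $C_\Phi$ is compact.

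**Main obstacle.** The delicate point is the extraction step in the converse: showing that a bounded sequence in $X$ has a subsequence converging locally uniformly on $\mathbb{C}_{1/2}$ to an element \emph{of $X$} with controlled norm. For $\mathcal{H}^p$ and $\mathcal{A}_\mu^p$ with $p>1$ one can invoke reflexivity (pass to a weakly convergent subsequence, identify the weak limit with the locally uniform limit via the continuity of coefficient functionals, and use lower semicontinuity of the norm). For $p=1$ one instead works at the level of Dirichlet coefficients: local uniform convergence forces coefficientwise convergence, and then one controls $\Vert f\Vert_X$ through the isometric identification with $H^1(\mathbb{T}^\infty)$ together with a Fatou argument on the polytorus. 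I would phrase the argument to cover both cases, flagging the $p=1$ case as the one requiring the extra care, and I expect this to be the technical heart of the proof.
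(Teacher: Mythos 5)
Your plan coincides with the paper's (the paper merely lists the ingredients -- boundedness of point evaluations, the Fatou property of $X$, and a Montel-type theorem -- and refers to the argument of Prop.~4.8 of \cite{lefevre2010composition}), and your necessity direction is correct. But there is a genuine gap in your sufficiency direction, at the parenthetical claim that convergence ``uniformly on compact subsets'' is \emph{equivalent} to convergence on each half-plane $\mathbb{C}_{1/2+\varepsilon}$. It is not: for bounded analytic functions on a half-plane, local uniform convergence does not imply uniform convergence on any smaller half-plane (consider $s \mapsto (s-n)/(s+n)$ on $\mathbb{C}_+$, which tends to $-1$ locally uniformly but not uniformly on any $\mathbb{C}_{\varepsilon}$). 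Since the sequential hypothesis you want to apply to $g_k = f_{n_k}-f$ demands uniform convergence on the \emph{unbounded} half-planes $\mathbb{C}_{1/2+\varepsilon}$, the classical Montel theorem you invoke does not deliver enough, and as written this step fails.

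The repair is exactly the third ingredient the paper cites: the Montel-type theorem for $\mathcal{H}^\infty$ of Dirichlet series (\cite{bayart2002hardy}, Lem.~18, used in \cite{bayart2003compact}). Your uniform bound $|g(s)| \le \Vert\delta_s\Vert\,\Vert g\Vert_X$ on $\mathbb{C}_{1/2+\varepsilon}$ shows that the restrictions of the unit ball of $X$ to $\mathbb{C}_{1/2+\varepsilon}$ form a bounded family of bounded Dirichlet series there; the Dirichlet-series Montel theorem then yields a subsequence converging uniformly on every strictly smaller half-plane, and a diagonal extraction over $\varepsilon \downarrow 0$ gives a subsequence converging uniformly on every $\mathbb{C}_{1/2+\varepsilon}$. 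This is a nontrivial fact specific to Dirichlet series (it rests on Bohr-type estimates), not a formal consequence of normal families, so it must be quoted or proved. The other delicate point you flagged -- that the limit $f$ belongs to $X$ with $\Vert f\Vert_X\le 1$ -- is indeed the Fatou property the paper invokes from \cite{lefevre2010composition}, and your reflexivity/coefficientwise treatment of it is consistent with that; but the half-plane convergence issue, which you passed over as an equivalence, is the step your write-up would actually break on.
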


\begin{proof}
The point-evaluation at $s \in \mathbb{C}_{1/2}$ is bounded on $X$, $X$ has the Fatou-property (in the sense of \cite{lefevre2010composition}, Prop 4.8) and there exists a Montel's theorem for $\mathcal{H}^{\infty}$ (see \cite{bayart2003compact}). Then it suffices to apply the same reasoning than in the proof of Proposition 4.8 from \cite{lefevre2010composition}.
\end{proof}

\begin{rem}
With this criterion, it is easy to see that if $C_{\Phi}$ is compact on $\mathcal{H}^2$ (resp. $\mathcal{A}_{\mu}^2$) then $C_{\Phi}$ is compact on $\mathcal{H}^{2k}$ (resp. $\mathcal{A}_{\mu}^{2k}$) for any $k \geq 1$.
\end{rem}

\subsection{Sufficient conditions of compactness}

In this section, we follow ideas from \cite{bayart2003compact}: we define the generalized Nevanlinna counting function associated to the spaces $ \mathcal{A}_{\mu}^2$ in order to study the compactness of composition operators on $\mathcal{A}_{ \mu}^2$. We need some preliminaries from \cite{bailleullefevre2013}. \\

For $ \sigma>0$, we define 
\[ \beta_h ( \sigma):= \int_{0}^{ \sigma} ( \sigma-u) h(u) \, du  = \int_{0}^{ \sigma}  \int_{0}^{t} h(u) \, du dt. \]

We point out that if $h$ is continuous, the two first derivatives of $ \beta_h$ are
\[ \left\lbrace
\begin{array}{lcc}
\beta_h'( \sigma) = \int_{0}^{ \sigma} h(u)du, \\
\beta_h''( \sigma) = h( \sigma).
\end{array}  \right.
 \]
 
\begin{rem} If we denote $\beta_{\alpha}$ instead of $\beta_h$ for the spaces $\mathcal{A}_{\alpha}^2$, it is easy to see that $\beta_{\alpha}(\sigma) \approx \sigma^{ \alpha+2}$ when $\sigma$ is close to $0$.
\end{rem}

\begin{theo}["Littlewood-Paley formula" \cite{bailleullefevre2013}]
Let $\eta$ be a Borel probability measure on $ \mathbb{R}$. Then
\[ \Vert f \Vert_{ \mathcal{A}_{\mu}^2}^2 =  \vert f( + \infty) \vert^2 + 4 \int_{ \mathbb{T}^{ \infty}} \int_{ 0}^{ + \infty} \int_{ \mathbb{R}} \beta_h( \sigma) \vert f_{ \chi}^{'}( \sigma+it) \vert^2 \, d \eta(t) d \sigma d m ( \chi). \]
\end{theo}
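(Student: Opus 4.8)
The plan is to reduce the Littlewood--Paley formula on $\mathcal{A}_{\mu}^2$ to the classical one-dimensional Littlewood--Paley identity for the Hardy space $H^2$ of the half-plane (or, after a conformal change of variables, the disc), applied fibrewise on the polytorus, and then integrate against $d\mu$. First I would establish the formula for a Dirichlet polynomial $f=\sum_{n=1}^N a_n\e_n$ and then pass to the general case by density, using the continuity of point evaluation at $+\infty$ (so that $f(+\infty)=a_1$) and the fact, recalled in the excerpt, that $\mathcal{A}_{\mu}^2$ has a Fatou-type property and that for $f\in\mathcal{A}_{\mu}^2$ the vertical limits $f_\chi$ converge on $\mathbb{C}_+$ for almost every $\chi$.

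For a polynomial $f$, write $\|f\|_{\mathcal{A}_{\mu}^2}^2=\sum_n |a_n|^2 w_h(n)$ with $w_h(n)=\int_0^{+\infty} n^{-2\sigma}h(\sigma)\,d\sigma$. The key computational step is to recognize that $w_h(n)-\delta_{n,1}$ can be rewritten using $\beta_h$. Indeed, for $n\ge 2$, integrating by parts twice (using $\beta_h''=h$, $\beta_h(0)=\beta_h'(0)=0$, and that $\beta_h$ grows subexponentially so boundary terms at $+\infty$ vanish against $n^{-2\sigma}$) gives
\[
\int_0^{+\infty} n^{-2\sigma} h(\sigma)\,d\sigma = (2\log n)^2 \int_0^{+\infty} n^{-2\sigma}\beta_h(\sigma)\,d\sigma,
\]
while for $n=1$ one has $w_h(1)=1=|f(+\infty)|^2/|a_1|^2$ contributing the $|f(+\infty)|^2$ term and no $\beta_h$ term since $\log 1=0$. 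Meanwhile, on the analytic side, $f_\chi'(\sigma+it)=-\sum_{n\ge 2} a_n\chi(n)(\log n)\,n^{-\sigma-it}$, and Parseval on $\mathbb{T}^\infty$ in $\chi$ together with the orthogonality of $(t\mapsto n^{-it})$ — here is where one integrates $d\eta(t)$ over $\mathbb{R}$ and uses that $\int_{\mathbb{R}} n^{-it}\overline{m^{-it}}\,d\eta(t)=\delta_{n,m}$ only after also integrating in $\chi$, so that the cross terms in $n\ne m$ vanish by the $\chi$-integration regardless of $\eta$ — yields
\[
\int_{\mathbb{T}^\infty}\!\int_{\mathbb{R}} |f_\chi'(\sigma+it)|^2 \,d\eta(t)\,dm(\chi) = \sum_{n\ge 2} |a_n|^2 (\log n)^2 n^{-2\sigma}.
\]
Multiplying by $4\beta_h(\sigma)$, integrating in $\sigma$, and invoking the identity above term by term reproduces $\sum_{n\ge 2}|a_n|^2 w_h(n)$, which completes the polynomial case after adding back $|f(+\infty)|^2=|a_1|^2$.

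The main obstacle, and the only genuinely delicate point, is the passage from polynomials to general $f\in\mathcal{A}_{\mu}^2$: one must justify that both sides are continuous along an approximating sequence of polynomials. The right-hand side requires controlling $\int_{\mathbb{T}^\infty}\int_0^{+\infty}\int_{\mathbb{R}}\beta_h(\sigma)|f_\chi'(\sigma+it)|^2$, i.e. a Fatou/monotone-convergence argument using that $\beta_h(\sigma)\gtrsim\sigma^2$ near $0$ and the a.e.-on-$\chi$ convergence of $f_\chi$ on $\mathbb{C}_+$ from Proposition (\cite{bailleullefevre2013}); one localizes to $\Re s>\varepsilon$, uses Fubini (all integrands nonnegative) and the polynomial identity on the partial sums $S_N f$, then lets $N\to\infty$ and $\varepsilon\to 0$ monotonically. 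I would also remark that the formula is independent of the choice of probability measure $\eta$ on $\mathbb{R}$, which is exactly the phenomenon that on $\mathbb{T}^\infty$ the vertical-translation averaging is already subsumed by the Haar integration over $\chi$; stating this explicitly clarifies why $\eta$ appears as a free parameter.
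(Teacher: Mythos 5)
This theorem is not proved in the paper at all: it is quoted verbatim from \cite{bailleullefevre2013}, so there is no in-paper argument to compare yours against. On its own merits, your proof is essentially correct and is the natural coefficientwise derivation. The central identity is right: for $n\ge 2$, by Fubini (no integration by parts or continuity of $h$ needed),
\[
4(\log n)^2\int_0^{+\infty} n^{-2\sigma}\beta_h(\sigma)\,d\sigma
=4(\log n)^2\int_0^{+\infty}h(u)\,n^{-2u}\Big(\int_0^{+\infty}v\,n^{-2v}\,dv\Big)du
=\int_0^{+\infty}n^{-2u}h(u)\,du=w_h(n),
\]
and the orthonormality of the characters $\chi\mapsto\chi(n)$ in $L^2(\mathbb{T}^\infty,m)$ gives
$\int_{\mathbb{T}^\infty}\int_{\mathbb{R}}|f_\chi'(\sigma+it)|^2\,d\eta(t)\,dm(\chi)=\sum_{n\ge2}|a_n|^2(\log n)^2n^{-2\sigma}$, independent of $t$, which is exactly why $\eta$ is a free parameter (this is the lemma the paper later quotes from \cite{bayart2003compact}). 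Multiplying by $4\beta_h(\sigma)$ and integrating in $\sigma$ closes the polynomial case.

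Two remarks on your limiting step. First, your aside that $\beta_h(\sigma)\gtrsim\sigma^2$ near $0$ is false in general: for $\mu_\alpha$ with $\alpha>0$ one has $\beta_\alpha(\sigma)\approx\sigma^{\alpha+2}\ll\sigma^2$; fortunately no lower bound on $\beta_h$ is needed. Second, Fatou applied to $S_Nf$ only yields one inequality, so as sketched the argument is lopsided; but the fix is simpler than a density argument. For a general $f=\sum a_n\e_n\in\mathcal{A}_\mu^2$ and each fixed $\sigma>0$, the series $\sum_{n\ge2}a_n\chi(n)(\log n)n^{-\sigma-it}$ converges in $L^2(m\otimes\eta)$ because $w_h(n)\ge C_\varepsilon n^{-\varepsilon}$ (the weight decays more slowly than any negative power), so Parseval gives the displayed identity exactly, not just in the limit; the a.e. identification of this $L^2$ sum with $f_\chi'(\sigma+it)$ uses the quoted proposition that $f_\chi$ converges on $\mathbb{C}_+$ for a.e.\ $\chi$ (locally uniform convergence of Dirichlet series beyond the abscissa of convergence). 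Then Tonelli in $\sigma$, together with the $w_h$ identity above, gives the theorem for all $f\in\mathcal{A}_\mu^2$ in one stroke. With those adjustments your proof is complete.
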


\begin{rem}
For a Dirichlet series $f$ with expansion $(1)$, $f( + \infty)$ stands for $a_1$, the constant coefficient.
\end{rem}

\begin{nota}
We shall call a $c_0$-symbol any analytic function $\Phi : \mathbb{C}_{1/2} \rightarrow  \mathbb{C}_{1/2}$ of the form $\Phi(s)=c_0s + \varphi(s)$ where $c_0$ is a nonnegative integer and $\varphi \in \mathcal{D}$ such that $C_{\Phi}$ is bounded on $\mathcal{A}_{\mu}^2$. When $c_0 \geq 1$ it is equivalent to the fact that $\varphi$ converges uniformly on $\mathbb{C}_{\varepsilon}$ for every $\varepsilon>0$ and $\varphi( \mathbb{C}_+) \subset \mathbb{C}_+$. We point out that in the sequel, the hypothesis "$c_0 \geq 1$" will be crucial.
\end{nota}

\begin{defin}
We define the generalized Nevanlinna counting function associated to a function $\beta :(0, + \infty) \rightarrow (0, + \infty)$ by

\[
N_{\beta, \Phi}(s)=
\left\lbrace
\begin{array}{clc}
\biindice{ \sum}{ a \in \mathbb{C}_+}{ \Phi(a)=s} \beta(\Re(a)) & \mbox{if} & s \in \Phi( \mathbb{C}_+),\\
\hbox{  } 0 & \mbox{else.} & 
\end{array}\right.
\]
When $\beta( \sigma)= \sigma$, we just denote $N_{\Phi}$ this function.
\end{defin}

\begin{rems}
$\hbox{ }$ 

\begin{enumerate}[(i)]
\item
$N_{\Phi}$ is associated to the space $\mathcal{H}^2$, it was already defined in \cite{bayart2003compact}.
\item
$N_{\beta, \Phi}$ is the equivalent in our framework of Dirichlet series of the generalized Nevanlinna counting function defined in \cite{kellay2012compact} in the framework of analytic functions on the unit disk.
\end{enumerate}
\end{rems}

\begin{lemme}{\label{nevanlinnageneralisee}}
Let $\Phi$ be a $c_0$-symbol with $c_0 \geq 1$. For every $s \in \mathbb{C}_+$, we have
\[ N_{\beta_h, \Phi} (s) = \int_{0}^{ \Re(s)} N_{ \Phi_u}(s) h(u) du\]
where $\Phi_u$ is defined by $\Phi_u(s):= \Phi(s+u)$.
\end{lemme}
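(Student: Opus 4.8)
The plan is to reduce the identity to a statement about the ordinary Nevanlinna counting function $N_{\Phi_u}$ by unwinding the definition of $\beta_h$ and applying Fubini's theorem. Recall that $\beta_h(\sigma)=\int_0^\sigma (\sigma-u)h(u)\,du$. So for a fixed $s\in\mathbb{C}_+$,
\[
N_{\beta_h,\Phi}(s)=\sum_{\substack{a\in\mathbb{C}_+\\ \Phi(a)=s}} \beta_h(\Re(a))
=\sum_{\substack{a\in\mathbb{C}_+\\ \Phi(a)=s}} \int_0^{\Re(a)} (\Re(a)-u)\,h(u)\,du.
\]
The first step is to interchange the sum over preimages with the integral in $u$ (both are non-negative, so Tonelli applies; the sum may be infinite but that is harmless). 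This gives
\[
N_{\beta_h,\Phi}(s)=\int_0^{+\infty} h(u)\Bigl(\sum_{\substack{a\in\mathbb{C}_+,\ \Re(a)>u\\ \Phi(a)=s}} (\Re(a)-u)\Bigr)\,du,
\]
where the condition $\Re(a)>u$ is what remains of the upper limit $\Re(a)$ once we pull the $u$-integral outside.

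The second, and key, step is to recognize the inner sum as $N_{\Phi_u}(s)$. Here I would use the change of variable $a=u+b$: since $\Phi_u(b)=\Phi(u+b)$, the condition $\Phi(a)=s$ with $\Re(a)>u$ is equivalent to $\Phi_u(b)=s$ with $b\in\mathbb{C}_+$, and $\Re(a)-u=\Re(b)$. Hence
\[
\sum_{\substack{a\in\mathbb{C}_+,\ \Re(a)>u\\ \Phi(a)=s}} (\Re(a)-u)
=\sum_{\substack{b\in\mathbb{C}_+\\ \Phi_u(b)=s}} \Re(b)=N_{\Phi_u}(s),
\]
provided $s\in\Phi_u(\mathbb{C}_+)$, and both sides are $0$ otherwise, consistently with the convention in the definition of $N_\Phi$. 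Substituting back yields
\[
N_{\beta_h,\Phi}(s)=\int_0^{+\infty} N_{\Phi_u}(s)\,h(u)\,du.
\]
Finally, since $N_{\Phi_u}(s)$ counts only preimages $b$ with $\Re(b)>0$, i.e. $a$ with $\Re(a)>u$, it vanishes as soon as $u\geq \Re(s)$: indeed $\Phi$ is a $c_0$-symbol with $c_0\geq 1$, so $\Re(\Phi(a))\geq c_0\Re(a)+\Re(\varphi(a))> \Re(a)$ (using $\varphi(\mathbb{C}_+)\subset\mathbb{C}_+$ and $c_0\geq1$), hence $\Phi(a)=s$ with $\Re(a)>u$ forces $\Re(s)>u$. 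Therefore the integral may be truncated at $\Re(s)$, giving exactly the claimed formula $N_{\beta_h,\Phi}(s)=\int_0^{\Re(s)} N_{\Phi_u}(s)\,h(u)\,du$.

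The main obstacle I anticipate is purely bookkeeping: making sure that the set of preimages of $s$ under $\Phi$ lying in $\mathbb{C}_+$ is correctly matched, under the translation $a\mapsto a-u$, with the set of preimages under $\Phi_u$, and that the truncation at $\Re(s)$ is rigorously justified via the "Schwarz-type" inequality $\Re(\Phi(a))>\Re(a)$ for $c_0$-symbols with $c_0\geq1$ noted in the remark after Theorem \ref{Continuitécoplusgrandque1}. One should also remark that $\Phi_u$ is itself a $c_0$-symbol (this is exactly the computation $\widetilde\varphi(s)=\varphi(s+u)+(c_0-1)u$ from the proof of Theorem \ref{Continuitécoplusgrandque1}), so that $N_{\Phi_u}$ is well-defined; apart from that, the Tonelli interchange and the linear change of variables are routine.
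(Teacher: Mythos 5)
Your proposal is correct and follows essentially the same route as the paper: unwind $\beta_h$, interchange sum and integral by Fubini/Tonelli, identify the inner sum with $N_{\Phi_u}(s)$ via the translation $a\mapsto a-u$, and use the Schwarz-type property of $c_0$-symbols ($\Re(\Phi(a))\geq\Re(a)$) to restrict to $u<\Re(s)$. The only difference is cosmetic: the paper truncates the sum to $\Re(a)\leq\Re(s)$ before applying Fubini, while you integrate over all $u>0$ first and truncate at the end.
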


\begin{proof}
Let $s \in \mathbb{C}_+$.
\[ \begin{array}{ccl}
	N_{ \beta_h, \Phi}(s) & = &\displaystyle{\biindice{ \sum}{ a \in \mathbb{C}_+}{ \Phi(a)=s}} \beta_h( \Re(a))  \\ 
                          & = & \displaystyle{\biindice{ \sum}{ a \in \mathbb{C}_+}{ \Phi(a)=s} \int_{0}^{ \Re(a)}} ( \Re(a)-u) h(u) du \\
                          & = & \displaystyle{\biindice{ \sum}{ \Re(a) \leq \Re(s)}{ \Phi(a)=s} \int_{0}^{ \Re(a)}} ( \Re(a)-u) h(u) du \\
                          \end{array}\]
thanks to the "Schwarz's lemma": indeed $\Phi( \mathbb{C}_{\Re(a)}) \subset  \mathbb{C}_{\Re(a)}$ and then $\Re(a)> \Re(s)$ implies $\Phi(a) \neq s$. Now by the Fubini's theorem:
\[ N_{ \beta_h, \Phi}(s) = \int_{0}^{ \Re(s)} \biindice{ \sum}{ u< \Re(a) \leq \Re(s)}{ \Phi(a)=s} ( \Re(a)-u) h(u) \, du. \]
Now it suffices to remark that 
\[  \begin{array}{ccl}
\biindice{ \sum}{ u< \Re(a) \leq \Re(s)}{ \Phi(a)=s} (\Re(a)-u) & = & \biindice{ \sum}{ u< \Re(a)}{ \Phi_u(a-u)=s} ( \Re(a)-u) \\
  & = & \biindice{ \sum}{ 0<\Re(a')}{ \Phi_u(a')=s}  \Re(a') \\
  & = &N_{ \Phi_u}(s). \\
  \end{array}\] 
\end{proof}

\begin{proposition}{\label{Inégalitédelittlewood}}
Let $\Phi$ be a $c_0$-symbol with $c_0 \geq 1$. For every $s \in \mathbb{C}_+$, we have
\[ N_{ \beta_h ,\Phi}(s) \leq \frac{\beta_h( \Re(s))}{c_0}. \]
\end{proposition}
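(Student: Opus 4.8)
The plan is to combine Lemma~\ref{nevanlinnageneralisee} with the classical Littlewood-type inequality for the ordinary Nevanlinna counting function $N_\Phi$ on $\mathcal{H}^2$, which follows from the Schwarz lemma (the statement $\Phi_u - u : \mathbb{C}_+ \to \mathbb{C}_+$ recorded in the remark after Theorem~\ref{Continuitécoplusgrandque1}). First I would recall, or establish, the bound
\[ N_{\Phi}(s) \leq \frac{\Re(s)}{c_0} \qquad \text{for every } s \in \mathbb{C}_+, \]
valid for any $c_0$-symbol $\Phi$ with $c_0 \geq 1$. This is the analogue of the Littlewood inequality $N_\varphi(w) \leq \log(1/|w|)$ for self-maps of the disk; in the present half-plane/Dirichlet setting it is proved in \cite{bayart2003compact}, using that $a \mapsto \Phi(a)$ is a self-map of $\mathbb{C}_+$ that "contracts" in the sense $\Re(\Phi(a)) \geq c_0 \Re(a)$, together with a Jensen/Littlewood argument: writing $\Phi = c_0 s + \varphi$ with $\varphi(\mathbb{C}_+)\subset\mathbb{C}_+$, the map $\Psi := \Phi/c_0$ satisfies $\Re(\Psi(a)) \geq \Re(a)$, and summing $\Re(a)$ over $\Phi(a) = s$ is controlled by $\Re(s)/c_0$. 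I would cite this as known (it is the "$\beta(\sigma)=\sigma$" case, associated to $\mathcal{H}^2$, referenced in the remarks) rather than reprove it.

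Given that inequality, the proposition is almost immediate. Fix $s \in \mathbb{C}_+$ and apply Lemma~\ref{nevanlinnageneralisee}:
\[ N_{\beta_h,\Phi}(s) = \int_0^{\Re(s)} N_{\Phi_u}(s)\, h(u)\, du. \]
Now $\Phi_u(z) = \Phi(z+u) = c_0(z+u) + \varphi(z+u) = c_0 z + \big(\varphi_u(z) + c_0 u\big)$, so $\Phi_u$ is again a $c_0$-symbol with $c_0 \geq 1$ (its "$\varphi$-part" $\varphi_u + c_0 u$ still converges uniformly on every $\mathbb{C}_\varepsilon$ and maps $\mathbb{C}_+$ into $\mathbb{C}_+$ since $\varphi$ does and $c_0 u > 0$). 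Hence the Littlewood inequality applies to $\Phi_u$ and gives $N_{\Phi_u}(s) \leq \Re(s)/c_0$ for each $u$ in $(0,\Re(s))$. Actually one can do slightly better and track the shift: since $\Phi_u$ maps $\mathbb{C}_+$ into $\mathbb{C}_{c_0 u}$ and, more to the point, $\Phi_u(a') = s$ forces $\Re(a') \leq (\Re(s) - c_0 u)/c_0 \cdot(\text{something})$; but the clean route is to note that by the Schwarz-type estimate already used in the proof of Lemma~\ref{nevanlinnageneralisee}, $\Phi(a) = s$ with $\Re(a) > u$ requires $\Re(a) \leq \Re(s)$, and a direct application of Littlewood to $\Phi_u$ yields the bound $N_{\Phi_u}(s) \le \Re(s)/c_0$ uniformly in $u$.

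Substituting this into the integral,
\[ N_{\beta_h,\Phi}(s) \leq \frac{\Re(s)}{c_0} \int_0^{\Re(s)} h(u)\, du = \frac{1}{c_0}\int_0^{\Re(s)}\Re(s)\, h(u)\,du. \]
To land exactly on $\beta_h(\Re(s))/c_0 = \frac{1}{c_0}\int_0^{\Re(s)}(\Re(s)-u)h(u)\,du$ rather than the weaker $\frac{\Re(s)}{c_0}\int_0^{\Re(s)} h$, I would use the sharper form $N_{\Phi_u}(s) \leq \Re(s) - u$ divided by $c_0$ — equivalently, keep the factor $\Re(a') = \Re(a) - u$ from the last display in the proof of Lemma~\ref{nevanlinnageneralisee} and apply Littlewood's inequality to $\Phi_u$ in the form $N_{\Phi_u}(s) \le \Re(s)/c_0$ but noting $\Re(a')\le (\Re(s))/c_0 - u$ is not quite right; cleanly, Littlewood for $\Phi_u$ says $N_{\Phi_u}(s) \le \Re(s)/c_0$, yet since every $a'$ contributing satisfies also $\Re(a') \le \Re(s) - c_0 u$ by the Schwarz lemma for the original $\Phi$ ($\Phi(a)=s$, $a = a'+u$, $\Re(\Phi(a))\ge c_0\Re(a)$ gives $\Re(s) \ge c_0(\Re(a') + u)$, i.e. $\Re(a') \le \Re(s)/c_0 - u$), we get $N_{\Phi_u}(s) \le \Re(s)/c_0 - u \le (\Re(s)-u)/c_0$ when $c_0\ge 1$. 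Plugging this refined bound in gives
\[ N_{\beta_h,\Phi}(s) \le \frac{1}{c_0}\int_0^{\Re(s)}(\Re(s)-u)\,h(u)\,du = \frac{\beta_h(\Re(s))}{c_0}, \]
which is the claim.

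**Main obstacle.** The only genuine content is the Littlewood inequality $N_{\Phi}(s) \le \Re(s)/c_0$ for $c_0$-symbols, which I would quote from \cite{bayart2003compact}; everything else is bookkeeping with Lemma~\ref{nevanlinnageneralisee} and the Schwarz-lemma inequality $\Re(\Phi(a)) \ge c_0\,\Re(a)$. The one point to be careful about is getting the sharp constant: one must use the $u$-dependent improvement $N_{\Phi_u}(s) \le (\Re(s)-u)/c_0$, obtained by feeding the constraint $\Re(a') \le \Re(s)/c_0 - u$ (itself a consequence of $\Re(\Phi(a)) \ge c_0\Re(a)$ applied to $a = a'+u$) into the sum, rather than the crude uniform bound $\Re(s)/c_0$, in order to recover $\beta_h(\Re(s))/c_0$ exactly rather than the weaker $\frac{\Re(s)}{c_0}\beta_h'(\Re(s))$.
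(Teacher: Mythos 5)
Your overall strategy is the paper's: write $N_{\beta_h,\Phi}(s)=\int_0^{\Re(s)}N_{\Phi_u}(s)\,h(u)\,du$ by Lemma~\ref{nevanlinnageneralisee}, bound $N_{\Phi_u}(s)$ by $(\Re(s)-u)/c_0$ via Littlewood's inequality, and integrate. The gap is in how you justify that refined bound. You argue: every preimage $a'$ with $\Phi_u(a')=s$ satisfies $\Re(a')\le \Re(s)/c_0-u$, hence $N_{\Phi_u}(s)\le \Re(s)/c_0-u$. That inference is invalid: $N_{\Phi_u}(s)$ is the \emph{sum} of $\Re(a')$ over all preimages, and a bound on each individual term does not bound the sum (with $n$ preimages your Schwarz estimate only yields $N_{\Phi_u}(s)\le n\,(\Re(s)/c_0-u)$, and $n$ is not controlled). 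The crude bound $N_{\Phi_u}(s)\le\Re(s)/c_0$, which you do justify correctly by applying Littlewood to the $c_0$-symbol $\Phi_u$, is, as you yourself note, not enough to reach $\beta_h(\Re(s))/c_0$.

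The inequality you want is nevertheless true, and the fix is exactly the paper's one-line trick: apply the quoted Littlewood inequality not to $\Phi_u$ but to the translated symbol $\Phi_u-u$. By the Schwarz-lemma remark, $\Phi_u-u$ is again a $c_0$-symbol mapping $\mathbb{C}_+$ into $\mathbb{C}_+$ (its $\varphi$-part is $\varphi(\cdot+u)+(c_0-1)u$), and trivially $N_{\Phi_u}(s)=N_{\Phi_u-u}(s-u)$ for $\Re(s)>u>0$, since both sums run over the same set of preimages. Littlewood at the point $s-u$ then gives $N_{\Phi_u}(s)\le(\Re(s)-u)/c_0$ uniformly in $u\in(0,\Re(s))$, and integrating against $h(u)\,du$ yields $\beta_h(\Re(s))/c_0$. (Alternatively, apply Littlewood to $\Phi_u-c_0u$, whose $\varphi$-part is $\varphi_u$, at the point $s-c_0u$, treating separately the trivial case $\Re(s)\le c_0u$ where $N_{\Phi_u}(s)=0$; this even gives the stronger bound $\Re(s)/c_0-u$ you were aiming for.) With that replacement, your argument coincides with the paper's proof.
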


%
\begin{proof}
By the previous lemma, we know that
\[  N_{ \beta_h, \Phi} (s) = \int_{0}^{ \Re(s)} N_{ \Phi_u}(s) h(u) du. \]
We already pointed out that $\Phi_u -u: \mathbb{C}_+ \rightarrow \mathbb{C}_+$ and then by Littlewood's inequality (\cite{bayart2003compact},Prop.3):
\[ N_{ \Phi_u -u}(s') \leq \frac{\Re(s')}{c_0} \quad \forall s' \in \mathbb{C}_+.\]
But $ N_{ \Phi_u}(s) = N_{ \Phi_u-u}(s-u)$ when $\Re(s)>u>0$. Finally 
\[ N_{ \beta_h, \Phi} (s) \leq \int_{0}^{ \Re(s)} \frac{(\Re(s)-u) h(u)}{c_0}\, du = \frac{\beta_h( \Re(s))}{c_0}. \]
\end{proof}

In order to obtain an upper bound for the essential norm of $C_{\Phi}$, we need to obtain some estimates on $N_{\beta_h, \Phi}$. We follow ideas from \cite{bayart2003compact}.

\begin{proposition}{\label{estimationPhichi}}
Let $\Phi : \mathbb{C}_{+} \rightarrow \mathbb{C}_+$ and $\Phi_{k} : \mathbb{C}_{+} \rightarrow  \mathbb{C}_{+}$ where $k \geq 0$, some analytic functions. Assume that $(\Phi_k)_{k \geq 0}$ converges uniformly to $\Phi$ on every compact subset of $\mathbb{C}_+$, then for every $s \in \mathbb{C}_+$,
\[ N_{\beta_h, \Phi}(s) \leq \liminf_{k \rightarrow + \infty} N_{\beta_h,\Phi_k}(s). \]
\end{proposition}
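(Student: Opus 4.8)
The plan is to adapt the classical semicontinuity argument for Nevanlinna counting functions (as in Shapiro's work and in \cite{bayart2003compact}) to the generalized counting function $N_{\beta_h,\Phi}$. The key point is that $N_{\beta_h,\Phi}(s)$ is, up to the weight $\beta_h$ on the real parts, a sum over the preimages $\Phi^{-1}(\{s\})$, and preimages of nearby symbols accumulate near preimages of the limit symbol by Hurwitz-type arguments. So the strategy is: fix $s \in \mathbb{C}_+$; if $s \notin \Phi(\mathbb{C}_+)$ there is nothing to prove since the left-hand side is $0$; otherwise enumerate the (at most countably many) solutions $a_1, a_2, \dots$ of $\Phi(a)=s$ in $\mathbb{C}_+$, and show that for each finite subcollection $a_1,\dots,a_m$ and each small $\varepsilon$, the perturbed symbols $\Phi_k$ for large $k$ have, near each $a_j$, at least as many solutions (counted with multiplicity) of $\Phi_k(a)=s$ as $\Phi$ has at $a_j$.

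First I would reduce to a local statement. Around each $a_j$ pick disjoint closed disks $\overline{D(a_j,r_j)} \subset \mathbb{C}_+$ on whose boundary $\Phi - s$ does not vanish; let $\delta_j = \min_{|a-a_j|=r_j} |\Phi(a)-s| > 0$. Since $\Phi_k \to \Phi$ uniformly on the compact set $\bigcup_j \partial D(a_j,r_j)$, for $k$ large we have $|\Phi_k - \Phi| < \delta_j$ there, so by Rouché's theorem $\Phi_k - s$ and $\Phi - s$ have the same number of zeros (with multiplicity) in $D(a_j,r_j)$. Call these zeros, for $\Phi_k$, the points $a_j^{(k,1)},\dots,a_j^{(k,\ell_j)}$ (where $\ell_j$ is the multiplicity of $a_j$ as a zero of $\Phi-s$), each lying in $D(a_j,r_j)$. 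Then
\[
N_{\beta_h,\Phi_k}(s) \;\ge\; \sum_{j=1}^{m}\sum_{i=1}^{\ell_j} \beta_h\bigl(\Re(a_j^{(k,i)})\bigr),
\]
because these are genuine solutions of $\Phi_k(a)=s$ and $\beta_h \ge 0$ (here I would note $N_{\beta_h,\Phi_k}(s)$ is defined as $0$ outside $\Phi_k(\mathbb{C}_+)$, but $s$ does lie in $\Phi_k(\mathbb{C}_+)$ once the disks contain solutions, so the inequality is valid). Now let $k \to \infty$: since each $a_j^{(k,i)} \in D(a_j,r_j)$ and $\beta_h$ is continuous, $\liminf_k \sum_i \beta_h(\Re(a_j^{(k,i)})) \ge \ell_j\,\beta_h(\Re(a_j)) - (\text{something controlled by } r_j)$; more cleanly, first shrink $r_j$ so that $\beta_h(\Re(a)) \ge \beta_h(\Re(a_j)) - \varepsilon/2^j$ on $D(a_j,r_j)$ (using continuity of $\beta_h$), giving $\liminf_k N_{\beta_h,\Phi_k}(s) \ge \sum_{j=1}^m \ell_j\,\beta_h(\Re(a_j)) - \varepsilon$. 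Letting $\varepsilon \to 0$ and then $m \to \infty$ (monotone convergence over the enumeration, since all terms $\beta_h(\Re(a_j)) \ge 0$ and each appears $\ell_j$ times) yields $\liminf_k N_{\beta_h,\Phi_k}(s) \ge \sum_{\Phi(a)=s} \beta_h(\Re(a)) = N_{\beta_h,\Phi}(s)$.

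The main obstacle I anticipate is justifying that the solutions of $\Phi_k(a)=s$ stay in a compact subset of $\mathbb{C}_+$ — i.e. that no mass of the counting function escapes to the boundary line $\Re(a)=0$ or to infinity — but in fact this is exactly why the inequality goes in the stated (one-sided) direction and why we only need a \emph{lower} bound on $N_{\beta_h,\Phi_k}$: we never claim the perturbed symbols have \emph{no more} solutions, only \emph{at least} those that are forced by Rouché near the fixed solutions of the limit symbol. A secondary technical care point is the possibility that $\Phi - s$ has infinitely many zeros in $\mathbb{C}_+$ or zeros accumulating at the boundary; this is handled by the exhaustion argument above, taking finite subcollections and disks small enough to be disjoint and compactly contained, and by the fact that $N_{\beta_h,\Phi}(s)$ is defined as a (possibly infinite) sum of nonnegative terms, so the supremum over finite subcollections recovers it. If one prefers, the whole argument can be localized further by intersecting with $\{\Re(a) > \varepsilon_0\}$ for $\varepsilon_0 \to 0$, using the Schwarz-type bound $\Re(a) \le \Re(s)$ of Lemma~\ref{nevanlinnageneralisee}/Proposition~\ref{Inégalitédelittlewood} when $c_0 \ge 1$ to keep the relevant region bounded, though the statement as phrased does not assume $c_0 \ge 1$ and the Rouché argument does not need it.
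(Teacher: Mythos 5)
Your proposal is correct and follows essentially the same route as the paper: locate, via Rouch\'e/Hurwitz and uniform convergence on small disjoint disks, solutions of $\Phi_k(a)=s$ near each chosen solution of $\Phi(a)=s$, lower-bound $\beta_h$ at the perturbed points (you invoke continuity of $\beta_h$, the paper gets the same $2\delta$-type estimate directly from the integral formula and $\Vert h\Vert_1=1$), and then let the error and the finite subcollection of preimages exhaust the sum. The multiplicity bookkeeping you add is harmless and not needed for the stated inequality.
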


\begin{proof}
Let us fix $s \in \Phi ( \mathbb{C}_+)$ (if $s \notin \Phi( \mathbb{C}_+)$ there is nothing to prove) and $\varepsilon>0$. Let $a_1, \, \dots , \, a_n \in \mathbb{C}_+$ satisfying $\Phi(a_i)=s$ for every $i=1, \, \dots , \, n$ and $\delta>0$ such that $2n\delta< \varepsilon$. $(\Phi_k)_{k \geq 0}$ converges uniformly to $\Phi$ on every compact set of $\mathbb{C}_+$ and so particularly on each disk $D(a_i, \delta)$ of center $a_i$ and radius $\delta$. Then by the Hurwitz's lemma, there exists $K \geq 0$ such that for any $k \geq K$, $s \in \Phi_k(D(a_i, \delta))$ for every $i=1, \, \dots , \, n$. 

Now let us fix $k \geq K$, there exist $a_1^k \in D(a_1, \delta)$, ... , $a_n^k \in D(a_n, \delta)$ such that $\Phi(a_i^k)=s$ for every $i=1, \, \dots , \, n$. By definition of $\beta_h$, we have

\[ \begin{array}{ccl}
\beta_h(\Re( a_i^k)) & = & \displaystyle{\int_{0}^{ \Re( a_i^k)} ( \Re( a_i^k)-u) h(u) du} \\
& \geq & \displaystyle{\int_{0}^{ \Re( a_i)- \delta}} ( \Re( a_i) - \delta -u) h(u) du \\
& = & \beta_h( \Re(a_i)) - \bigg{(} \delta \displaystyle{\int_{0}^{ \Re( a_i) - \delta} h(u)du} + \displaystyle{\int_{ \Re( a_i)- \delta}^{ \Re(a_i)}} ( \Re(a_i)-u) h(u) du \bigg{)} \\
& \geq & \beta_h( \Re(a_i)) - 2 \delta \\
\end{array}\]
because $ \Vert h \Vert_{1}=1$. Summing up all the terms 
\[ \beta_h( \Re(a_1^k)) + \cdots + \beta_h( \Re(a_n^k)) \geq  \beta(\Re(a_1))+ \cdots + \beta( \Re(a_n)) - 2n\delta .\]
Then for every $k \geq K$, we get
\[ \begin{array}{ccl}
\beta(\Re(a_1))+ \cdots + \beta( \Re(a_n)) & \leq  & 2n \delta + \beta_h( \Re(a_1^k)) + \cdots + \beta_h( \Re(a_n^k)) \\
& \leq &  \varepsilon + N_{\Phi_k}(s). \\
\end{array} \]
So
\[ \beta(\Re(a_1))+ \cdots + \beta( \Re(a_n)) \leq \varepsilon + \liminf_{k \rightarrow + \infty} N_{\Phi_k}(s) \]
and since $n$ and $\varepsilon$ are arbitrary, the result is proved.
\end{proof}

\begin{cor}
Let $\Phi$ a $c_0$-symbol with $c_0 \geq 1$. Assume that
\[ \sup_{ \Re(s)>0} \frac{N_{\beta, \Phi}(s)}{\beta(\Re(s))} = C < + \infty, \]
then
\[ \sup_{\chi \in \mathbb{T}^{\infty}} \sup_{ \Re(s)>0} \frac{N_{\beta, \Phi_{\chi}}(s)}{\beta(\Re(s))} = C.\]
\end{cor}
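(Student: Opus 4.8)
The plan is to prove the two inequalities separately. The lower bound $\sup_{\chi}\sup_{\Re(s)>0}N_{\beta,\Phi_{\chi}}(s)/\beta(\Re(s))\ge C$ costs nothing: the constant character $\mathbf{1}\in\mathbb{T}^{\infty}$ (i.e. $\mathbf{1}(n)=1$ for all $n$) gives $\varphi_{\mathbf{1}}=\varphi$, hence $\Phi_{\mathbf{1}}=\Phi$, so restricting the double supremum to $\chi=\mathbf{1}$ already recovers $C$. All the work is therefore in the reverse inequality, which amounts to showing $N_{\beta,\Phi_{\chi}}(s)\le C\,\beta(\Re(s))$ for \emph{every} $\chi\in\mathbb{T}^{\infty}$ and every $s\in\mathbb{C}_{+}$.

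Fix such a $\chi$. The first step is a change of variables relating the counting functions of the vertical translates $\Phi_{\tau}(s)=c_{0}s+\varphi(s+i\tau)$ to that of $\Phi$. If $\Phi_{\tau}(a)=s$, put $b=a+i\tau$; then $\Re(b)=\Re(a)$ and $c_{0}b+\varphi(b)=s+ic_{0}\tau$, i.e. $\Phi(b)=s+ic_{0}\tau$, and $a\mapsto b$ is a bijection from $\{a\in\mathbb{C}_{+}:\Phi_{\tau}(a)=s\}$ onto $\{b\in\mathbb{C}_{+}:\Phi(b)=s+ic_{0}\tau\}$. Hence $N_{\beta,\Phi_{\tau}}(s)=N_{\beta,\Phi}(s+ic_{0}\tau)$ for all $s\in\mathbb{C}_{+}$ and all $\tau\in\mathbb{R}$, and since $\Re(s+ic_{0}\tau)=\Re(s)$ the hypothesis yields
\[ N_{\beta,\Phi_{\tau}}(s)\le C\,\beta(\Re(s))\qquad(s\in\mathbb{C}_{+},\ \tau\in\mathbb{R}). \]

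Next I would pass to the vertical limit. Since $\varphi$, and therefore $\varphi_{\chi}$, converges uniformly on each $\mathbb{C}_{\varepsilon}$, the functions $\Phi_{\tau}$ and $\Phi_{\chi}$ are all analytic on $\mathbb{C}_{+}$ and map $\mathbb{C}_{+}$ into $\mathbb{C}_{+}$ (for nonconstant $\varphi$ one has $\Re\varphi_{\chi}>0$ on $\mathbb{C}_{+}$ by the open mapping theorem; the constant case is trivial), and by the description of vertical limits recalled in Section~2 there is a sequence $(\tau_{k})\subset\mathbb{R}$ with $\Phi_{\tau_{k}}\to\Phi_{\chi}$ uniformly on compact subsets of $\mathbb{C}_{+}$. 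Applying Proposition~\ref{estimationPhichi} with $\Phi=\Phi_{\chi}$ and $\Phi_{k}=\Phi_{\tau_{k}}$ (its proof carries over verbatim to the continuous nondecreasing $\beta$ considered here, the normalization $\Vert h\Vert_{1}=1$ being used there only to bound $\beta_{h}(\Re(a_{i}^{k}))$ from below, which now follows from continuity of $\beta$) gives
\[ N_{\beta,\Phi_{\chi}}(s)\le\liminf_{k\to+\infty}N_{\beta,\Phi_{\tau_{k}}}(s)\le C\,\beta(\Re(s)). \]
Taking the supremum over $s\in\mathbb{C}_{+}$ and then over $\chi\in\mathbb{T}^{\infty}$, and combining with the lower bound, finishes the proof.

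The argument is short once the ingredients are assembled; the only points that require genuine attention are checking that all the $\Phi_{\tau}$ and the limit $\Phi_{\chi}$ do map $\mathbb{C}_{+}$ into $\mathbb{C}_{+}$ (so that Proposition~\ref{estimationPhichi} is applicable) and observing that that proposition, although stated for $\beta_{h}$, holds for the general $\beta$ appearing in the statement. I do not expect a serious obstacle beyond this bookkeeping.
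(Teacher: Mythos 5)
Your proof is correct and follows essentially the same route as the paper: the translation identity $N_{\beta,\Phi_{\tau}}(s)=N_{\beta,\Phi}(s+ic_0\tau)$, Proposition \ref{estimationPhichi} applied to a sequence $(\Phi_{\tau_k})$ converging to the vertical limit $\Phi_{\chi}$, and the trivial character $\chi=(1,1,\dots)$ for the lower bound. Your extra checks (that the $\Phi_{\tau}$ and $\Phi_{\chi}$ map $\mathbb{C}_+$ into $\mathbb{C}_+$, and the bijection behind the translation identity) are just details the paper leaves implicit.
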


\begin{proof}
For every $\tau \in \mathbb{R}$, it is easy to see that $N_{ \beta_h, \Phi_{\tau}}(s) = N_{ \beta_h, \Phi}(s+ic_0 \tau)$ and so for every $\tau \in \mathbb{R}$ we have
\[ \sup_{ \Re(s)>0} \frac{N_{\beta, \Phi_{\tau}}(s)}{\beta(\Re(s))} = C. \]
Now for $\chi \in \mathbb{T}^{\infty}$, $\Phi_{\chi}$ is a vertical limit of a sequence $(\Phi_{\tau_n})_{n \geq 0}$ where $(\tau_n)_{n \geq 0} \subset \mathbb{R}$ and then it suffices to apply Proposition \ref{estimationPhichi} to obtain that 
\[ \sup_{ \Re(s)>0} \frac{N_{\beta, \Phi_{\chi}}(s)}{\beta(\Re(s))} \leq C \]
The other inequality is true because $\Phi=\Phi_{\chi}$ with $\chi=(1, 1, \dots)$.
\end{proof}

%
\begin{rem}
In the previous corollary, we can take the supremum on a band of the form $\lbrace s \in \mathbb{C}_+, \Re(s)< \theta \rbrace$ where $\theta \in \mathbb{R}$.
\end{rem}

Let $H$ a Hilbert space and $ \mathcal{K}(H)$ the space of all compact operators on $H$. Let $T$ be a bounded operator on $H$, we recall that its essential norm is defined by 
\[ \Vert T \Vert_{H,e} = \inf \lbrace \Vert T-K \Vert, \, K \in \mathcal{K}(H) \rbrace. \]
Clearly, the essential norm of $T$ is zero if and only if $T \in \mathcal{K}(H)$. The next result will be very useful.

\begin{prop}{(\cite{shapiro1987essential},Prop5.1)}
Let $T$ be a bounded operator on $H$ and $(K_n)_{n \geq 1}$ a sequence of self-adjoint compact operators on $H$. Let $R_n= I-K_n$. Assume that $\Vert R_n \Vert=1$ for every $n \geq 1$ and that $(R_n)_{n \geq 1}$ converges pointwise to zero. Then
\[ \Vert T \Vert_{H,e} = \lim_{n \rightarrow + \infty} \Vert T R_n \Vert.\]
\end{prop}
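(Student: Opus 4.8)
The plan is to establish the two inequalities $\limsup_{n} \Vert TR_n \Vert \leq \Vert T \Vert_{H,e}$ and $\Vert T \Vert_{H,e} \leq \inf_n \Vert TR_n\Vert$; together they force $\lim_n \Vert TR_n\Vert$ to exist and to coincide with $\Vert T\Vert_{H,e}$.

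For the lower bound, I would simply note that for each fixed $n$ the operator $TK_n$ is compact, being the product of the bounded operator $T$ and the compact operator $K_n$. Hence $TR_n = T - TK_n$ differs from $T$ by a compact operator, so $\Vert T\Vert_{H,e} \leq \Vert T - TK_n\Vert = \Vert TR_n\Vert$ for every $n$, which gives $\Vert T\Vert_{H,e} \leq \inf_n \Vert TR_n\Vert$. The hypothesis $\Vert R_n\Vert = 1$ and the self-adjointness of the $K_n$ play no role in this half.

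For the upper bound, I would fix an arbitrary compact operator $K$ on $H$ and write $\Vert TR_n\Vert \leq \Vert T-K\Vert\,\Vert R_n\Vert + \Vert KR_n\Vert = \Vert T-K\Vert + \Vert KR_n\Vert$, using $\Vert R_n\Vert = 1$. The crux is then to prove $\Vert KR_n\Vert \to 0$. Since each $K_n$ is self-adjoint, so is $R_n = I - K_n$, and by hypothesis $(R_n)$ is a uniformly bounded sequence converging to $0$ in the strong operator topology. I would invoke the standard fact that if $S_n \to 0$ strongly with $\sup_n\Vert S_n\Vert < \infty$ and $L$ is compact, then $\Vert S_n L\Vert \to 0$: indeed $\overline{L(B_H)}$ is a compact subset of $H$, and a uniformly bounded sequence converging pointwise to $0$ on a compact set converges uniformly there (a finite-net / $\varepsilon/3$ argument), so $\sup_{x \in B_H}\Vert S_n L x\Vert \to 0$. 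Applying this with $S_n = R_n$ and $L = K^*$ gives $\Vert R_n K^*\Vert \to 0$; and since $\Vert KR_n\Vert = \Vert (KR_n)^*\Vert = \Vert R_n^* K^*\Vert = \Vert R_n K^*\Vert$ — where self-adjointness of $R_n$ is exactly what lets the compact factor cross the adjoint — we conclude $\Vert KR_n\Vert \to 0$. Therefore $\limsup_n \Vert TR_n\Vert \leq \Vert T - K\Vert$, and taking the infimum over all compact $K$ yields $\limsup_n \Vert TR_n\Vert \leq \Vert T\Vert_{H,e}$.

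Combining the two bounds finishes the proof. The single genuinely delicate point — everything else being formal — is the norm-convergence $\Vert KR_n\Vert \to 0$: pointwise (or weak) convergence of $R_n$ to $0$ is not by itself sufficient, and it is precisely here that the compactness of $K$, the uniform bound $\Vert R_n\Vert = 1$, and the self-adjointness of the $K_n$ are all used at once.
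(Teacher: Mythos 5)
Your proof is correct. The paper does not actually prove this proposition --- it is quoted verbatim from \cite{shapiro1987essential}, Prop.~5.1 --- and your argument (lower bound because $TK_n$ is compact, upper bound by writing $\Vert TR_n\Vert \leq \Vert T-K\Vert + \Vert KR_n\Vert$ and showing $\Vert KR_n\Vert = \Vert R_nK^*\Vert \to 0$ via self-adjointness, the uniform bound $\Vert R_n\Vert =1$, and uniform convergence on the compact set $\overline{K^*(B_H)}$) is essentially Shapiro's original proof, so there is nothing to add.
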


\begin{theoreme}{\label{Thprincipal}}
Let $\Phi$ a $c_0$-symbol with $c_0 \geq 1$. Assume that $Im( \varphi)$ is bounded on $ \mathbb{C}_+$, then 
\[ \Vert C_{ \varphi} \Vert_{\mathcal{A}_{\mu}^2,e} \leq \bigg{(}2 \sup_{ \mathbb{C}_+} \vert Im( \varphi) \vert+c_0 \bigg{)} \cdot \limsup_{ \Re(s) \rightarrow 0} \frac{N_{\beta_h, \Phi}(s)}{\beta_h(Re(s))} \]
and
\[ \Vert C_{ \varphi} \Vert_{\mathcal{H}^2,e} \leq \bigg{(}2 \sup_{ \mathbb{C}_+} \vert Im( \varphi) \vert+ c_0 \bigg{)} \cdot \limsup_{ \Re(s) \rightarrow 0} \frac{N_{ \Phi}(s)}{\Re(s)}. \]
\end{theoreme}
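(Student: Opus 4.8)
The plan is to adapt Shapiro's essential-norm machinery, exactly as in \cite{bayart2003compact}, using Proposition (\cite{shapiro1987essential},Prop5.1) together with the Littlewood--Paley formula for $\mathcal{A}_{\mu}^2$ and the generalized Nevanlinna counting function $N_{\beta_h,\Phi}$. First I would construct a suitable sequence of compact operators: for $r>0$ let $K_r$ be (up to normalization) the operator that truncates a Dirichlet series $f=\sum a_n\e_n$ to the partial sum over those $n$ with $\e_n$ ``small'', i.e. multiplication by an indicator in the $\chi$-variable, or more precisely the operators built from the partial Littlewood--Paley integral restricted to $\{\Re(s)\ge r\}$. One checks $R_r=I-K_r$ has norm $1$ and converges pointwise (strongly) to $0$ as $r\to 0$, so that $\|C_\Phi\|_{\mathcal{A}_\mu^2,e}=\lim_{r\to 0}\|C_\Phi R_r\|$. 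The point of the restriction is that $\|C_\Phi R_r f\|_{\mathcal{A}_\mu^2}^2$ will, via the change of variables $s\mapsto\Phi(s)$ in the Littlewood--Paley integral, be controlled by $\|f\|^2$ times a quantity involving $\sup N_{\beta_h,\Phi}(s)/\beta_h(\Re(s))$ taken over a shrinking region near the boundary $\Re(s)=0$.

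Concretely, writing the Littlewood--Paley formula for $g=C_\Phi f=f\circ\Phi$, one has $g_\chi'=(f_{\chi^{c_0}}'\circ\Phi_\chi)\cdot\Phi_\chi'$ by $(\Delta)$, so $\|C_\Phi f\|^2$ equals $|f(+\infty)|^2$ plus $4\int_{\mathbb{T}^\infty}\int\int \beta_h(\sigma)|f_{\chi^{c_0}}'(\Phi_\chi(\sigma+it))|^2|\Phi_\chi'(\sigma+it)|^2$. Performing the non-univalent change of variables $w=\Phi_\chi(\sigma+it)$ converts the $\beta_h(\sigma)$ weight into the counting function $N_{\beta_h,\Phi_\chi}(w)$, giving a bound of the form $\int_{\mathbb{T}^\infty}\int\int |f_{\chi^{c_0}}'(w)|^2 N_{\beta_h,\Phi_\chi}(w)$. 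The corollary following Proposition \ref{estimationPhichi} lets me replace $\sup_\chi \sup_s N_{\beta_h,\Phi_\chi}(s)/\beta_h(\Re(s))$ by the same supremum for $\Phi$ itself; and after cutting off to $\Re(w)$ small (which is what $R_r$ buys us, modulo a harmless shift argument to handle the part of the image of $\{\Re(s)\ge r\}$) one gets the factor $\limsup_{\Re(s)\to 0}N_{\beta_h,\Phi}(s)/\beta_h(\Re(s))$. The $\mathcal{H}^2$ statement is the same argument with $\beta_h(\sigma)=\sigma$ and $N_\Phi$ in place of $N_{\beta_h,\Phi}$, using the classical Littlewood--Paley identity for $\mathcal{H}^2$.

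The constant $2\sup_{\mathbb{C}_+}|\mathrm{Im}(\varphi)|+c_0$ is where the hypothesis on $\mathrm{Im}(\varphi)$ and the Schwarz-type lemma $\Phi_u-u:\mathbb{C}_+\to\mathbb{C}_+$ enter, and I expect this bookkeeping to be the main obstacle. The issue is that applying $R_r$ restricts $f$ to $\Re(s)\ge r$, but $\Phi$ does not map $\{\Re(s)\ge r\}$ into $\{\Re(w)\ge r\}$ — it maps into $\{\Re(w)\ge c_0 r + \inf\Re\varphi\}$ only in the ``good'' direction, while the imaginary part of $\Phi$ can wander. One must therefore compare $N_{\beta_h,\Phi}$ evaluated on a genuinely boundary-hugging region against $\beta_h$ of the relevant abscissa, and the factor $2\sup|\mathrm{Im}(\varphi)|$ comes from estimating how far $\Phi(s)$ can be pushed in the imaginary direction (hence how large a vertical window of the target must be swept), while the $+c_0$ accounts for the dilation $c_0 s$ in the real part; one packages this by using the sub-mean-value / monotonicity properties of $N_{\beta_h,\Phi}$ together with Proposition \ref{Inégalitédelittlewood} to absorb the ``escaping'' contribution. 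Once this geometric comparison is set up correctly, taking $r\to 0$ and invoking Shapiro's proposition yields the two displayed inequalities.
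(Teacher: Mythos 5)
Your overall skeleton does match the paper's: Shapiro's proposition, the Littlewood--Paley formula for $\mathcal{A}_\mu^2$, the non-univalent change of variables producing $N_{\beta_h,\Phi_\chi}$, the corollary that lets one replace the supremum over the vertical limits $\Phi_\chi$ by the one for $\Phi$ itself, and the role of the boundedness of $\mathrm{Im}(\varphi)$ in confining the image to a horizontal strip (in the paper: $\mathrm{Im}\,\Phi_\chi(s)=c_0\,\mathrm{Im}(s)+\mathrm{Im}\,\varphi_\chi(s)$ with $\mathrm{Im}(s)\in(0,1)$, so the image lies in $(0,\infty)\times(-A,A+c_0)$, and the factor $2A+c_0$ appears when the Lebesgue measure on that interval is renormalized in the Littlewood--Paley formula --- not, as you suggest, from the real-part dilation). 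The first genuine gap is your choice of compact operators: ``multiplication by an indicator in the $\chi$-variable'' does not map the space to itself, and ``the partial Littlewood--Paley integral restricted to $\{\Re(s)\ge r\}$'' is a quadratic form, not an operator; in neither case do you verify self-adjointness, compactness, $\Vert I-K_r\Vert=1$, or pointwise convergence of $R_r$ to $0$, which Shapiro's proposition requires. The paper simply takes $K_n(f)=\sum_{k\le n}a_k\e_k$, so $R_n(f)=\sum_{k>n}a_k\e_k$ is the coefficient tail.

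The second, decisive gap is the region away from the boundary, which you wave off as a ``harmless shift argument'' whose contribution is to be ``absorbed''. Note first that your worry in the real direction is misplaced: by the Schwarz-type lemma, $\Phi$ does map $\mathbb{C}_r$ into $\mathbb{C}_r$. The real problem is that $R_nf$ (or your $R_rf$) is in no way localized near $\Re(s)=0$, so after the change of variables one still integrates over the whole strip; splitting at $\Re(s)=\theta$, the near-boundary piece gives the factor $\gamma_\theta=\sup_{0<\Re(s)<\theta}N_{\beta_h,\Phi}(s)/\beta_h(\Re(s))$, but one must then show the far piece tends to $0$ as $n\to\infty$ for fixed $\theta$. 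This is exactly where the coefficient-tail choice of $R_n$ pays off in the paper: by Littlewood's inequality $N_{\beta_h,\Phi_\chi}(s)\le\beta_h(\Re(s))/c_0$ and the identity $\int_{\mathbb{T}^\infty}\int_{\mathbb{R}}\vert f_\chi'(\sigma+it)\vert^2\,d\eta(t)\,dm(\chi)=\sum_{k\ge2}\vert a_k\vert^2k^{-2\sigma}\log^2 k$, the far contribution for $R_nf$ is $\sum_{k>n}\vert a_k\vert^2\int_\theta^{\infty}\beta_h(\sigma)k^{-2\sigma}\log^2 k\,d\sigma$, and since $w_h(k)$ decays more slowly than any negative power of $k$ (McCarthy), this is at most $\varepsilon\sum_{k>n}\vert a_k\vert^2w_h(k)\le\varepsilon$ once $n$ is large. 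Your proposal contains no mechanism playing this role (the appeal to sub-mean-value properties of $N_{\beta_h,\Phi}$ is not developed and does not involve the cut-off parameter), so as written it would only bound the operator norm by the global supremum of $N_{\beta_h,\Phi}(s)/\beta_h(\Re(s))$, not the essential norm by the $\limsup$ as $\Re(s)\to0$.
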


\begin{cor}{\label{Corprincipal}}
Assume that $Im( \varphi)$ is bounded on $ \mathbb{C}_+$ then:

\begin{enumerate}[(i)]
\item
If $N_{\beta_h,\Phi}(s) = o(\beta(\Re(s))$ when $\Re(s) \rightarrow 0$ then $C_{\Phi}$ is compact on $ \mathcal{A}_{\mu}^2$.
\item
If $N_{\Phi}(s) = o(\Re(s))$ when $\Re(s) \rightarrow 0$ then $C_{\Phi}$ is compact on $\mathcal{H}^2$.
\end{enumerate}
\end{cor}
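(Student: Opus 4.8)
The plan is to prove Corollary \ref{Corprincipal} as an immediate consequence of Theorem \ref{Thprincipal}. Both parts have the same structure, so I will describe part (i); part (ii) is identical with $\beta_h$ replaced by the identity function and $\mathcal{A}_{\mu}^2$ replaced by $\mathcal{H}^2$.

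\medskip

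First, I note that the hypothesis $\mathrm{Im}(\varphi)$ bounded on $\mathbb{C}_+$ is exactly what is needed to apply Theorem \ref{Thprincipal}, so the estimate
\[ \Vert C_{\Phi} \Vert_{\mathcal{A}_{\mu}^2,e} \leq \Bigl(2 \sup_{\mathbb{C}_+} \vert \mathrm{Im}(\varphi) \vert + c_0 \Bigr) \cdot \limsup_{\Re(s) \rightarrow 0} \frac{N_{\beta_h,\Phi}(s)}{\beta_h(\Re(s))} \]
is available. The quantity $2\sup_{\mathbb{C}_+}\vert \mathrm{Im}(\varphi)\vert + c_0$ is a finite constant independent of $s$. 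Hence if the hypothesis $N_{\beta_h,\Phi}(s) = o(\beta_h(\Re(s)))$ as $\Re(s)\to 0$ holds, then $\limsup_{\Re(s)\to 0} N_{\beta_h,\Phi}(s)/\beta_h(\Re(s)) = 0$, and the right-hand side of the displayed inequality is zero. Therefore $\Vert C_{\Phi}\Vert_{\mathcal{A}_{\mu}^2,e} = 0$.

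\medskip

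Second, I invoke the standard fact recalled just before Proposition \ref{estimationPhichi}'s context in the excerpt: for a bounded operator $T$ on a Hilbert space $H$, one has $\Vert T\Vert_{H,e}=0$ if and only if $T\in\mathcal{K}(H)$, i.e. $T$ is compact. Applying this with $T=C_{\Phi}$ and $H=\mathcal{A}_{\mu}^2$ gives that $C_{\Phi}$ is compact on $\mathcal{A}_{\mu}^2$, which is the assertion of (i). The same argument with $H=\mathcal{H}^2$ and the second inequality of Theorem \ref{Thprincipal} gives (ii).

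\medskip

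I do not anticipate any obstacle here: this corollary is purely a matter of reading off the consequence "essential norm $=0$ $\Rightarrow$ compact" from the quantitative bound in Theorem \ref{Thprincipal}. All the substantive work (the generalized Nevanlinna counting function estimates, the Littlewood--Paley formula, Proposition \ref{estimationPhichi}, and the Shapiro-type essential norm formula) has already been carried out in establishing Theorem \ref{Thprincipal}.
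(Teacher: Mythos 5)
Your proposal is correct and coincides with the paper's intent: the corollary is stated as an immediate consequence of Theorem \ref{Thprincipal}, and your argument — the hypothesis makes the $\limsup$ vanish, so the essential norm is zero, and zero essential norm is equivalent to compactness (as recalled in the paper just before the Shapiro proposition) — is exactly the reading-off the paper has in mind.
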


\begin{rem}
$Cor2.(ii)$ is already known (see \cite{bayart2003compact}, Th2).
\end{rem}

\begin{proof}[Proof of Th.\ref{Thprincipal}]
We only give the proof in the case of $\mathcal{A}_{\mu}^2$, the proof in the case $ \mathcal{H}^2$ is essentially the same.
 
For $n \geq 1$ and $f \in \mathcal{A}_{\mu}^2$ which has form $(1)$ we define
\[\left\lbrace    
\begin{array}{cccc}
K_n(f) &=& \displaystyle{\sum_{k=1}^{n}} a_k e_k& \hbox{ } \\
 R_n(f)  &=& (I-K_n)(f) & = \displaystyle{\sum_{k=n+1}^{+ \infty}} a_k e_k. \end{array} \right. \]

Each $K_n$ is a self-adjoint compact operator on $\mathcal{A}_{\mu}^2$ with norm $1$ because $K_n$ is a projection on a finite linear subspace and $(R_n)$ clearly converges pointwise to zero. Then by the previous proposition

\[ \Vert C_{ \Phi} \Vert_{A_{\mu}^2,e} = \lim_{n \rightarrow + \infty} \Vert C_{ \Phi} R_n \Vert = \lim_{n \rightarrow + \infty} \bigg{\lbrace} \sup_{ \Vert f \Vert_{ \mathcal{A}_{\mu}^2} \leq 1} \Vert C_{ \Phi} (R_n(f))  \Vert_{ \mathcal{A}_{\mu}^2} \bigg{\rbrace}. \]

In the sequel, we shall work on $\Omega = (0, + \infty)\times (0, 1)$ with the area \linebreak measure: we apply the Littlewood-Paley formula for $\mathcal{A}_{\mu}^2$ with the Lebesgue measure on $(0,1)$:
\[\begin{array}{ccl}
\Vert C_{ \Phi} (R_n(f))  \Vert_{\mathcal{A}_{\mu}^2}^2 & = & \vert R_n(f)\circ \Phi(+ \infty) \vert^2 \\
                                                        & + & 4 \displaystyle{\int_{ \mathbb{T}^{ \infty}}} \iint_{\Omega} \beta_h(\Re(s)) \vert (R_n(f)\circ \Phi)_{\chi}^{'}(s) \vert^2   \, ds dm( \chi). \\
                                                        \end{array} \]

We can point out that $R_n(f) \circ \Phi( + \infty) =0$ for every $n \geq 2$ because $c_0 \geq 1$ and then $\Phi( + \infty) = + \infty$. Now by Proposition 4.3 from \cite{gordon1999composition}:

\[  \Vert C_{ \Phi} (R_n(f))  \Vert_{\mathcal{A}_{\mu}^2}^2 =   4 \int_{ \mathbb{T}^{ \infty}}  \iint_{\Omega} \beta_h(\Re(s)) \vert R_n(f)^{'}_{\chi^{c_0}}( \Phi_{ \chi}(s) ) \vert^2  \vert  \Phi_{ \chi}^{'}(s) \vert^2 \, ds dm( \chi). \]

Now we make the following "change of variables": $w = \Phi_{ \chi}(s)$, $\Phi_{\chi}$ is not necessarily injective but we use the generalized formula of change of variable which involves the generalized Nevanlinna counting function (see \cite{federer1996geometric} for example). We get:
\[\begin{array}{cl}
 & \displaystyle{\iint_{\Omega}} \beta_h(\Re(s)) \vert R_n(f)^{'}_{\chi^{c_0}}( \Phi_{ \chi}(s) ) \vert^2    \vert \Phi_{ \chi}^{'}(s) \vert^2 \, ds \\
 & \\
  =& \displaystyle{ \iint_{ \Phi_{\chi}(\Omega)}}  \vert R_n(f)^{'}_{\chi^{c_0}}(s) \vert^2   N_{\beta_h, \Phi_{ \chi}}(s) \, ds. 
  \end{array} \]

$Im( \varphi)$ is bounded then there exists $A>0$ such that $\vert Im( \varphi) \vert < A$ and the same inequality holds for $\varphi_{\chi}$ for every $\chi \in \mathbb{T}^{\infty}$ instead of $\varphi$: indeed $\varphi_{\chi}$ is a vertical limit of $\varphi$. Now, when $s \in \Omega$, we have $Im(s) \in (0,1)$ and $w= \Phi_{\chi}(s)$ verifies $Im(w) = c_0 Im(s) + Im( \varphi_{\chi}(s))$ so $Im(w) \in [-A,A+c_0]$. Finally if we denote $Q= (0, + \infty) \times (-A, A+c_0)$, we get

\[\begin{array}{cl}
 & \displaystyle{\int_{\Omega}} \beta_h(\Re(s)) \vert R_n(f)^{'}_{\chi^{c_0}}( \Phi_{ \chi}(s) ) \vert^2    \vert \Phi_{ \chi}^{'}(s) \vert^2 \, ds \\
 & \\
  \leq & \displaystyle{ \iint_{Q}}  \vert R_n(f)^{'}_{\chi^{c_0}}(s) \vert^2   N_{\beta_h, \Phi_{ \chi}}(s) \, ds. 
  \end{array} \]


Let us fix $ \theta>0$, we write $Q=Q_{\theta} \cup \tilde{Q}_{\theta}$ where $Q_{\theta}= (0, \theta) \times (-A,A+c_0)$ and $\tilde{Q}_{\theta}= (\theta, + \infty) \times (-A,A+c_0)$. We shall cut the previous integral in two parts. Let $\gamma_{ \theta}$ defined by 
\[ \gamma_{ \theta} = \sup_{ \chi \in \mathbb{T}^{ \infty}} \bigg{\lbrace} \, \sup_{ 0< Re(s) < \theta} \frac{N_{\beta_h, \Phi_{ \chi}}(s)}{\beta_h(\Re(s))} \bigg{ \rbrace } =  \sup_{ 0< Re(s) < \theta} \frac{N_{\beta_h, \Phi}(s)}{\beta_h(\Re(s))}.  \]
The second equality is true by Corollary 1. We have

\[  \begin{array}{cl}
&  \displaystyle{\int_{\mathbb{T}^{\infty}} \iint_{ Q_{\theta}}}   \vert R_n(f)^{'}_{\chi^{c_0}}(s ) \vert^2   N_{ \beta_h,\Phi_{ \chi}}(s) \, ds dm( \chi) \\ 

 \leq &\gamma_{ \theta} \,   \displaystyle{ \int_{\mathbb{T}^{\infty}} \iint_{ Q_{\theta}}}   \vert R_n(f)^{'}_{\chi^{c_0}}(s ) \vert^2  \beta_h(\Re(s)) \, ds dm( \chi).  \end{array}\]

Now we take the supremum:
\[ \sup_{ \Vert f \Vert_{ \mathcal{A}_{\mu}^2} \leq 1 } \bigg{ \lbrace}   \displaystyle{\int_{\mathbb{T}^{\infty}} \iint_{ Q_{\theta}}}    \vert R_n(f)^{'}_{\chi^{c_0}}(s ) \vert^2   N_{\beta_h, \Phi_{ \chi}}(s) \, ds dm( \chi) \bigg{ \rbrace} \]
\[ \, \, \leq \gamma_{ \theta} \sup_{ \Vert f \Vert_{ \mathcal{A}_{\mu}^2} \leq 1} \, \bigg{ \lbrace} \displaystyle{ \int_{\mathbb{T}^{\infty}} \iint_{ Q_{\theta}}}    \vert R_n(f)^{'}_{\chi^{c_0}}(s ) \vert^2  \beta_h(\Re(s)) \,ds dm( \chi)  \bigg{ \rbrace} \]
\[ \leq \gamma_{ \theta} \sup_{ \Vert f \Vert_{ \mathcal{A}_{\mu}^2} \leq 1} \bigg{ \lbrace} \displaystyle{ \int_{\mathbb{T}^{\infty}}\iint_{ Q_{\theta}}}    \vert f^{'}_{\chi^{c_0}}(s ) \vert^2  \beta_h(\Re(s)) \, ds dm( \chi)\bigg{ \rbrace}. \]

In the last inequality, we use the fact that if $f$ belongs to the unit ball of $ \mathcal{A}_{\mu}^2$, $R_n(f)$ too. Finally, by using again the Littlewood-Paley formula with the normalized Lebesgue measure on $(-A, A+c_0)$ and the fact that the map $ \chi \rightarrow \chi^{c_0}$ is measure preserving we obtain:

\[ \sup_{ \Vert f \Vert_{ \mathcal{A}_{\mu}^2} \leq 1 } \bigg{ \lbrace} 4 \int_{\mathbb{T}^{\infty}} \iint_{ Q_{\theta}} \vert R_n(f)^{'}_{\chi^{c_0}}(s ) \vert^2   N_{\beta_h, \Phi_{ \chi}}(s) \, ds dm( \chi) \bigg{ \rbrace} \leq (2A+c_0)\gamma_{\theta}. \]

Now we obtain an upper bound for the second integral. By Proposition \ref{Inégalitédelittlewood}:

\[ \begin{array}{clr}
& \displaystyle{\int_{ \mathbb{T}^{ \infty}}   \iint_{\tilde{Q_{\theta}}}}  \vert R_n(f)^{'}_{\chi^{c_0}}(s ) \vert^2   N_{\beta_h, \Phi_{ \chi}}(s) \, ds dm( \chi)& \\
 \leq & \displaystyle{\int_{ \mathbb{T}^{ \infty}}    \iint_{\tilde{Q_{\theta}}}}  \vert R_n(f)^{'}_{\chi^{c_0}}(s ) \vert^2  \frac{\beta_h( \Re(s))}{c_0} \, ds dm( \chi)& \quad \qquad (\Delta)
 \end{array}\]

Recall (see \cite{bayart2003compact}, lem2) that for every $f \in \mathcal{H}^2$ with form $(1)$ and every probability measure $\eta$ on $\mathbb{R}$ we have for every $\sigma>0$:
\[ \int_{ \mathbb{T}^{ \infty}} \int_{ \mathbb{R}} \vert f_{ \chi}^{'} ( \sigma + it) \vert^2 d \eta(t)  dm( \chi) = \sum_{k=2}^{ + \infty} \vert a_k \vert^2 k^{-2 \sigma} log^2(k), \quad \quad \forall \sigma>0. \]

Hence, we get:
\[ (\Delta) = \frac{2A+c_0}{c_0} \, \int_{ \theta}^{ + \infty}  \sum_{k=n+1}^{ + \infty} \beta_h(\sigma) \vert a_k \vert^2 k^{-2 \sigma} log^2(k) d \sigma.  \]

Let us fix $\varepsilon>0$. We claim that there exists $K=K_{\theta}>0$ such that for every $k \geq K$:
\[ \int_{ \theta}^{ + \infty} \beta_h(\sigma)  k^{-2 \sigma} log^2(k) d \sigma \leq \varepsilon w_h(k). \]

Indeed, we know by \cite{mccarthy2004hilbert} that $(w_h(k))$ is a decreasing sequence that decays more slowly than any negative power, so there exists $C=C_{\theta}>0$ such that:

\[ w_h(k) \geq C k^{- \theta} \quad \forall k \geq 1. \]
Then
\[ \int_{ \theta}^{ + \infty} \beta_h(\sigma)  \frac{k^{-2 \sigma} log^2(k)}{w_h(k)} d \sigma \leq \int_{ \theta}^{ + \infty} \sigma k^{\theta - 2 \sigma} C^{-1} log^2(k) d \sigma \]
because $\beta_h( \sigma) \leq \sigma$ by definition of $\beta_h$. Clearly the right-hand side of the inequality goes to 0 when $k$ goes to infinity. \\

Finally when $n \geq K$, we obtain
\[\frac{2A+c_0}{c_0} \, \int_{ \theta}^{ + \infty}  \sum_{k=n+1}^{ + \infty} \beta_h(\sigma) \vert a_k \vert^2 k^{-2 \sigma} log^2(k) d \sigma \leq \frac{2A+c_0}{c_0} \times \varepsilon \]
because we work with functions in $ \mathcal{A}_{\mu}^2$ with norm less than $1$. Now summing up the two integrals, we obtain that for $n \geq K$,
\[
 \Vert C_{ \Phi} R_n \Vert \leq (2A+c_0)\gamma_{\theta} +  \frac{2A+c_0}{c_0} \times \varepsilon \quad \quad \quad \quad \quad \forall n \geq K.\]
At last,
\[  \begin{array}{ccl}
\Vert C_{ \Phi} \Vert_{A_{\mu}^2,e}& = & \displaystyle{\lim_{n \rightarrow + \infty}} \Vert C_{ \Phi} R_n \Vert \\
& \leq & \displaystyle{(2A+c_0)\gamma_{\theta} +  \frac{2A+c_0}{c_0} \times \varepsilon}. \\
\end{array} \]
Now since $\varepsilon$ and $\theta$ are arbitrary, we obtain the result.
\end{proof}

\subsection{Necessary conditions of compactness}
First, we recall some facts from \cite{bailleullefevre2013}. The reproducing kernel at $s \in \mathbb{C}_{1/2}$ associated to $\mathcal{A}_{\mu}^2$ is defined for every $w \in \mathbb{C}_{1/2}$ by
\[ K_{\mu,s}(w):= \sum_{n=1}^{+ \infty} \frac{n^{- \overline{s}-w}}{w_h(n)}. \]

\begin{ex}
Let $\alpha>-1$. The reproducing kernel at $s \in \mathbb{C}_{1/2}$ associated to the space $\mathcal{A}_{\alpha}^2$ is denoted by $K_{\alpha,s}$ instead of $K_{\mu_{\alpha},s}$ and we have for every $w \in \mathbb{C}_{1/2}$,
\[ K_{\alpha,s}(w) = \sum_{n=1}^{ + \infty} (1+ \log(n))^{\alpha + 1} n^{-  \overline{s} - w}. \]
\end{ex}

We saw in the previous section that the compactness is linked with the behaviour of $\Phi$ near to the imaginary axis. Then, as in \cite{bayart2003compact}, we shall work with partial reproducing kernels which are defined on $ \mathbb{C}_+$ and not only on $\mathbb{C}_{1/2}$.

\begin{defi}
Let $s \in \mathbb{C}_{+}$ and $l \geq 1$. On $ \mathcal{A}_{\mu}^2$, we define the partial reproducing kernel of order $l$ at $s \in \mathbb{C}_{+}$ by
\[ K_{\mu,s}^{l}(w):= \biindice{\sum}{n \geq 1}{p^{+}(n) \leq p_l} \frac{n^{- \overline{s}-w}}{w_h(n)}\]
where $p^{+}(n)$ is the greatest prime divisor of $n$.
\end{defi}

We claim that these partial reproducing kernels are defined on $\mathbb{C}_+$: indeed we know by \cite{mccarthy2004hilbert} that the weight $(w_h(n))$ decreases more slowly than any negative power of $n$ so if $\varepsilon>0$, there exists $C>0$ such that $w_h(n)>Cn^{- \varepsilon}$ for every $n \geq 1$. Then 
\[  \biindice{\sum}{n \geq 1}{p^{+}(n) \leq l} \frac{n^{- \Re(s)- \Re(w)}}{w_h(n)} \leq \frac{1}{C}  \biindice{\sum}{n \geq 1}{p^{+}(n) \leq l} \frac{1}{n^{\Re(s)+ \Re(w)- \varepsilon}} = \frac{1}{C} \prod_{i=1}^{l} \big{(} 1-p_i^{ -\Re(s)- \Re(w)+ \varepsilon } \big{)}^{-1}. \]
For $s \in  \mathbb{C}_+$, this quantity is then bounded for any $w \in \mathbb{C}_{ \varepsilon}$ and any $ \varepsilon>0$ and so the claim is proved.

We shall need the following proposition which follows easily from \cite{bayart2003compact},Prop.5.

\begin{proposition}{\label{Noyauxreproduisantspartiels}}
Let $\Phi$ a $c_0$-symbol with $c_0 \geq 1$ and $l \geq 1$. Assume that $ \Phi(s)=c_0s + \displaystyle{\sum_{n=1}^{+ \infty} c_n n^{-s}}$ where $c_n=0$ when $p^{+}(n)>l$. Then $C_{ \Phi}^{*}(K_{\mu,s}^{l}) = K_{\mu, \Phi(s)}^{l}$.
\end{proposition}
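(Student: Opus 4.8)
The plan is to verify the adjoint identity directly on the orthonormal basis, or equivalently to check the defining reproducing property of $K^l_{\mu,s}$ is compatible with composition by $\Phi$. First I would recall what $K^l_{\mu,s}$ does: writing $E_l$ for the set of positive integers $n$ with $p^+(n)\le p_l$ (an $l$-generated multiplicative semigroup containing $1$), the function $K^l_{\mu,s}=\sum_{n\in E_l} \overline{n^{-\overline s}}\,\e_n^{\mu}\big/\sqrt{w_h(n)}$ reproduces, against any $g\in\mathcal{A}_\mu^2$, the ``$E_l$-truncation'' of $g$ at $s$; that is, $\langle g, K^l_{\mu,s}\rangle = \sum_{n\in E_l} b_n n^{-s}$ if $g=\sum b_n \e_n$. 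So the identity $C_\Phi^*(K^l_{\mu,s})=K^l_{\mu,\Phi(s)}$ is equivalent to the statement that for every $g\in\mathcal{A}_\mu^2$ (or just every Dirichlet polynomial, by density and continuity of point evaluation),
\[
\sum_{n\in E_l}(g\circ\Phi)_n(s)\,\text{''} = \sum_{n\in E_l} g_n(\Phi(s)),
\]
where $(h)_n$ denotes the $n$-th Dirichlet coefficient of $h$; more precisely $\langle C_\Phi g, K^l_{\mu,s}\rangle = \langle g, K^l_{\mu,\Phi(s)}\rangle$.

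The key arithmetic point — this is exactly the content imported from \cite{bayart2003compact}, Prop.~5 — is the following: under the hypothesis that $\Phi(s)=c_0 s+\sum_{n}c_n n^{-s}$ with $c_n=0$ whenever $p^+(n)>l$, composing a single monomial $\e_m$ with $\Phi$ produces a Dirichlet series $m^{-\Phi(s)}=m^{-c_0 s}\exp\!\big(-\log m\sum_n c_n n^{-s}\big)$ whose support (the set of $n$ appearing with nonzero coefficient) consists only of integers of the form $m^{c_0}\cdot q$ with $q\in E_l$, i.e. of integers whose prime factors $>p_l$ come only from $m^{c_0}$ with the same multiplicities (scaled by $c_0$) as in $m$. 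In particular, an integer $N$ in the support of $\e_m\circ\Phi$ lies in $E_l$ if and only if $m\in E_l$ and $c_0\ge 1$ forces... actually more carefully: $N\in E_l$ iff the ``large-prime part'' of $m^{c_0}$ is trivial, i.e. iff $m\in E_l$ (using $c_0\ge1$). This gives a clean splitting: the $E_l$-truncation of $g\circ\Phi$ depends only on the $E_l$-truncation of $g$, and on that part $\Phi$ acts as an honest composition. I would make this precise by first treating $g$ a Dirichlet monomial $\e_m$, then extending by linearity to Dirichlet polynomials, then to all of $\mathcal{A}_\mu^2$ by density together with boundedness of $C_\Phi$ and of the point-evaluations $\delta_s$ on $\mathcal{A}_\mu^2$ (\cite{bailleullefevre2013}, Th.1), exactly as in the density argument in the proof of Theorem \ref{Continuitécoplusgrandque1}.

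Concretely, the chain of equalities I would write is: for a Dirichlet polynomial $g=\sum_m b_m \e_m$,
\[
\langle C_\Phi g,\,K^l_{\mu,s}\rangle_{\mathcal{A}_\mu^2}
= \sum_{N\in E_l}\big(g\circ\Phi\big)_N\,N^{-s}
= \sum_{m\in E_l} b_m\!\!\sum_{N\in E_l}(\e_m\circ\Phi)_N\,N^{-s}
= \sum_{m\in E_l} b_m\,m^{-\Phi(s)}
= \langle g,\,K^l_{\mu,\Phi(s)}\rangle_{\mathcal{A}_\mu^2},
\]
where the second equality uses that only $m\in E_l$ contribute (support lemma above), the third uses that when $m\in E_l$ the whole series $m^{-\Phi(s)}=\e_m\circ\Phi$ is already supported in $E_l$ so the inner truncation is the identity, and the last is just the reproducing property of $K^l_{\mu,\Phi(s)}$ (valid since $\Phi(s)\in\mathbb{C}_{1/2}$ and $K^l_{\mu,w}$ is defined for $w\in\mathbb{C}_+\supset\mathbb{C}_{1/2}$, as shown above). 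Finally, pass from polynomials to general $g$ by density, using that $\langle C_\Phi g, K^l_{\mu,s}\rangle$ and $\langle g, K^l_{\mu,\Phi(s)}\rangle$ are both continuous in $g$ (the first because $C_\Phi$ is bounded, the second trivially), to conclude $C_\Phi^*(K^l_{\mu,s})=K^l_{\mu,\Phi(s)}$.

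The main obstacle is the support lemma: one must check carefully that composing a monomial $\e_m$ with $\Phi(s)=c_0 s+\varphi(s)$, where $\varphi=\sum c_n n^{-s}$ is supported on $E_l$, yields a Dirichlet series supported on $m^{c_0}E_l$, and that $m^{c_0}E_l\subset E_l$ precisely when $m\in E_l$ (here $c_0\ge1$ is used to control the large-prime part and to ensure $m^{-c_0 s}$ contributes $m^{c_0}$, not $1$, to the ``base''). This is where convergence of $\varphi$ must be invoked to legitimately expand $\exp(-\log m\cdot\varphi(s))$ as a convergent Dirichlet series with the claimed support, and it is exactly the place where \cite{bayart2003compact}, Prop.~5 does the work in the $\mathcal{H}^2$ case; the transfer to $\mathcal{A}_\mu^2$ is formal once one replaces the $\mathcal{H}^2$ inner product by the weighted one, since the weight $w_h(n)$ only rescales the basis and does not interact with the arithmetic of supports.
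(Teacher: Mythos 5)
Your proof is correct and is essentially the paper's approach: the paper gives no independent argument, simply remarking that the proposition ``follows easily from'' Proposition~5 of \cite{bayart2003compact}, and the details you supply (the support lemma for $\e_m\circ\Phi$, with $c_0\geq 1$ guaranteeing that $m^{c_0}q\in E_l$ for $q\in E_l$ exactly when $m\in E_l$, then linearity, then density with boundedness of $C_\Phi$ and of point evaluations) are precisely that adaptation, the weight $w_h(n)$ playing no arithmetic role. One minor slip: the coefficient of $\e_n^{\mu}$ in $K_{\mu,s}^{l}$ is $n^{-\overline{s}}/\sqrt{w_h(n)}$ rather than $\overline{n^{-\overline{s}}}/\sqrt{w_h(n)}$, but this does not affect your argument since you only ever use the (correctly stated) reproducing identity $\langle g,K_{\mu,s}^{l}\rangle=\sum_{n\in E_l}b_n n^{-s}$.
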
 

\begin{lemme}
On the unit ball $B_{\mathcal{A}_{\mu}^2}$ of $\mathcal{A}_{\mu}^2$, the weak topology is the topology of uniform convergence on every half-plane $\mathbb{C}_{1/2+ \varepsilon}$ with $ \varepsilon>0$.
\end{lemme}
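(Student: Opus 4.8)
The plan is to prove both inclusions between the two topologies on $B_{\mathcal{A}_\mu^2}$ using the reproducing kernel structure. First I would show that uniform convergence on every $\mathbb{C}_{1/2+\varepsilon}$ implies weak convergence. Let $(f_k)$ be a sequence in $B_{\mathcal{A}_\mu^2}$ converging uniformly on each half-plane $\mathbb{C}_{1/2+\varepsilon}$ to some $f$; note $f\in B_{\mathcal{A}_\mu^2}$ by the Fatou-property of $\mathcal{A}_\mu^2$ (in the sense of \cite{lefevre2010composition}, Prop~4.8, as used earlier in the paper). Since the linear span of the reproducing kernels $\{K_{\mu,s}:s\in\mathbb{C}_{1/2}\}$ is dense in $\mathcal{A}_\mu^2$, and $\langle f_k,K_{\mu,s}\rangle=f_k(s)\to f(s)=\langle f,K_{\mu,s}\rangle$ by pointwise (indeed locally uniform) convergence, we get $\langle f_k,g\rangle\to\langle f,g\rangle$ first for $g$ in this dense span, and then for all $g\in\mathcal{A}_\mu^2$ by the uniform bound $\sup_k\|f_k\|\le 1$ together with a standard $3\varepsilon$-argument.

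For the converse, suppose $(f_k)\subset B_{\mathcal{A}_\mu^2}$ converges weakly to $f$. Testing against the reproducing kernels gives $f_k(s)=\langle f_k,K_{\mu,s}\rangle\to\langle f,K_{\mu,s}\rangle=f(s)$ for every $s\in\mathbb{C}_{1/2}$, so we have pointwise convergence on $\mathbb{C}_{1/2}$. To upgrade this to uniform convergence on each $\mathbb{C}_{1/2+\varepsilon}$, I would use a normal-families argument: the functions $f_k$ are analytic on $\mathbb{C}_{1/2}$ and the continuity of point evaluation on $\mathcal{A}_\mu^2$ (see \cite{bailleullefevre2013}, Th.1) gives, for $s\in\mathbb{C}_{1/2+\varepsilon}$, a bound $|f_k(s)|\le\|\delta_s\|_{(\mathcal{A}_\mu^2)^*}\le\sup_{\Re(s)\ge 1/2+\varepsilon}\|\delta_s\|$, which is finite (one can estimate $\|\delta_s\|^2=K_{\mu,s}(s)=\sum_n n^{-2\Re(s)}/w_h(n)$, convergent and locally bounded on $\mathbb{C}_{1/2}$ since $w_h(n)$ decays slower than any negative power by \cite{mccarthy2004hilbert}). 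Hence $(f_k)$ is locally uniformly bounded on $\mathbb{C}_{1/2}$, so by Montel's theorem it is a normal family; any locally uniform limit of a subsequence must agree with $f$ by the already-established pointwise convergence, and a standard argument then forces the whole sequence to converge to $f$ uniformly on compact subsets, in particular — after a further elementary estimate controlling the behaviour as $|\Im(s)|\to\infty$ — uniformly on all of $\mathbb{C}_{1/2+\varepsilon}$.

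The decay control as $|\Im(s)|\to\infty$ on the strip $\Re(s)\ge 1/2+\varepsilon$ is the one point that needs a little care beyond the bounded-region Montel argument: here one uses that $f_k(s)=\langle f_k, K_{\mu,s}\rangle$ with $\|K_{\mu,s}\|$ depending only on $\Re(s)$, combined with weak convergence being uniform on the relatively compact (in the weak topology) ball, or alternatively one reduces to a fixed vertical strip of finite height by vertical-translation invariance of the norm. I expect this tail estimate, and the bookkeeping needed to pass from "uniform on compacta" to "uniform on the full half-plane $\mathbb{C}_{1/2+\varepsilon}$", to be the main technical obstacle; everything else is the routine interplay of reproducing kernels, density, and the uniform norm bound on $B_{\mathcal{A}_\mu^2}$.
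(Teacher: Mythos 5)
Your first direction (uniform convergence on the half-planes $\Rightarrow$ weak convergence) is fine, and in fact a slight variant of the paper's argument: the paper extracts a weakly convergent subsequence from the ball and identifies its limit through the bounded point evaluations, while you use density of the span of reproducing kernels plus the uniform norm bound; both are routine and correct.

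The gap is in the converse direction, and it is exactly at the point you flag. Classical Montel gives you a subsequence converging uniformly only on \emph{compact} subsets of $\mathbb{C}_{1/2}$, and neither of your two suggested fixes bridges the passage to uniform convergence on the full unbounded half-plane $\mathbb{C}_{1/2+\varepsilon}$. The phrase ``weak convergence being uniform on the relatively compact (in the weak topology) ball'' is not a usable principle: weak convergence of $(f_k)$ gives $\langle f_k,K_{\mu,s}\rangle\to 0$ for each fixed $s$, but uniformity in $s$ over $\{\Re(s)\ge 1/2+\varepsilon\}$ would require norm-precompactness of the normalized kernels $K_{\mu,s}/\Vert K_{\mu,s}\Vert$ over that set, which fails precisely because of the vertical translations $s\mapsto s+i\tau$. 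The vertical-translation reduction fails for the same reason: if $|f_{n_k}(s_k)|\ge\delta$ with $\Im(s_k)\to\infty$, the translated functions $g_k=f_{n_k}(\cdot+i\Im(s_k))$ do lie in the ball, but $g_k(s)=\langle f_{n_k},K_{\mu,s+i\Im(s_k)}\rangle$ involves a \emph{moving} kernel, so weak convergence of $(f_{n_k})$ to $0$ gives no information about the pointwise behaviour of $(g_k)$, and you cannot identify the limit of a weakly convergent subsequence of $(g_k)$ as $0$. The missing ingredient is the Montel-type theorem for Dirichlet series (Lemma 18 of \cite{bayart2002hardy}, used in \cite{bayart2003compact}), which the paper invokes: a sequence of Dirichlet series uniformly bounded on a half-plane $\mathbb{C}_{\theta}$ admits a subsequence converging uniformly on every $\mathbb{C}_{\theta+\varepsilon}$. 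This statement exploits the Dirichlet-series structure (coefficient bounds/Bohr's theorem) and is precisely what rules out ``bumps escaping to $i\infty$''; once you have it, the paper's route is short: the point-evaluation estimate of \cite{bailleullefevre2013} gives uniform boundedness of $(f_k)$ on each $\mathbb{C}_{1/2+\varepsilon}$, the Dirichlet Montel theorem gives subsequences converging uniformly on half-planes, and weak convergence forces every such cluster point to be $0$, hence the whole sequence converges uniformly to $0$ on each $\mathbb{C}_{1/2+\varepsilon}$. Without citing or reproving that lemma, your argument does not close.
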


\begin{proof}
Let $(f_n) \subset B_{\mathcal{A}_{\mu}^2}$ which converges weakly to $0$. Let $\varepsilon>0$, by the result on the point evaluation from \cite{bailleullefevre2013}, there exists a constant $C_{\varepsilon}$ such that 
\[ \vert f_{n}(s) \vert \leq C_{ \varepsilon} \Vert f_n \Vert \quad \hbox{ for every s } \in \mathbb{C}_{1/2+ \varepsilon} \]
and then the sequence $(f_n)$ is uniformly bounded on $ \mathbb{C}_{1/2+ \varepsilon}$. Now by the Montel's theorem for $\mathcal{H}^{\infty}$ (see \cite{bayart2002hardy},Lem18), there exists a subsequence which converges uniformly on $\mathbb{C}_{1/2+ \varepsilon}$. It suffices to show now that $0$ is the unique cluster point, but the point evaluation is bounded and $(f_n)$ converges weakly to $0$ and so $0$ is the unique cluster point. \\

Conversely let $(f_n) \subset B_{\mathcal{A}_{\mu}^2}$ which converges uniformly to $0$ on $ \mathbb{C}_{1/2+ \varepsilon}$ for every $ \varepsilon>0$. There exists at least a subsequence which converges weakly to $f \in \mathcal{A}_{ \mu}^2$. But the point evaluation is bounded for every $s \in \mathbb{C}_{1/2}$ and $(f_n)$ converges uniformly to $0$ on $ \mathbb{C}_{1/2+ \varepsilon}$ for every $ \varepsilon>0$ so $f = 0$.
\end{proof}

\begin{theoreme}
Let $\Phi$ a $c_0$-symbol with $c_0 \geq 1$ and $l \geq 1$. Assume that $ \Phi(s)=c_0s + \displaystyle{\sum_{n=1}^{+ \infty} c_n n^{-s}}$ where $c_n=0$ when $p^{+}(n)>l$. We have:
\[ \Vert C_{ \Phi} \Vert_{\mathcal{A}_{\mu}^2,e} \geq  \limsup_{ \Re(s) \rightarrow 0} \frac{\Vert K_{\mu,\Phi(s)}^l \Vert_{ \mathcal{A}_{\mu}^2}}{\Vert K_{\mu,s}^l \Vert_{ \mathcal{A}_{\mu}^2}} . \]
\end{theoreme}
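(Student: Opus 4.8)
The plan is to lower-bound the essential norm by testing $C_\Phi - K$ against the normalized partial reproducing kernels and letting $\Re(s)\to 0$. For $s\in\mathbb{C}_+$ and $l\geq 1$, set $k_s^l := K_{\mu,s}^l / \Vert K_{\mu,s}^l\Vert_{\mathcal{A}_\mu^2}$, so that $\Vert k_s^l\Vert = 1$. The first step is to show that $k_s^l \to 0$ weakly in $\mathcal{A}_\mu^2$ as $\Re(s)\to 0$. By the Lemma characterizing the weak topology on the unit ball of $\mathcal{A}_\mu^2$, it suffices to check that $k_s^l \to 0$ uniformly on every half-plane $\mathbb{C}_{1/2+\varepsilon}$. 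For this I would estimate $\vert K_{\mu,s}^l(w)\vert$ from above for $w\in\mathbb{C}_{1/2+\varepsilon}$, using that $w_h(n)$ decays more slowly than any negative power (so the defining sum of $K_{\mu,s}^l$ is dominated by a convergent Euler product over the primes $p_1,\dots,p_l$, uniformly bounded on $\mathbb{C}_{1/2+\varepsilon}$), while $\Vert K_{\mu,s}^l\Vert_{\mathcal{A}_\mu^2}^2 = K_{\mu,s}^l(\overline{s}) = \sum_{p^+(n)\leq p_l} n^{-2\Re(s)}/w_h(n) \to +\infty$ as $\Re(s)\to 0$ (the term $n=1$ already shows nothing blows up, but letting $\Re(s)\to 0$ the product $\prod_{i=1}^l (1-p_i^{-2\Re(s)})^{-1}$-type lower bound diverges). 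Hence $k_s^l(w)\to 0$ locally uniformly, giving weak convergence to $0$.

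The second step is the standard essential-norm inequality: for any compact operator $K$ on $\mathcal{A}_\mu^2$ and any sequence $s_j$ with $\Re(s_j)\to 0$, since $k_{s_j}^l \rightharpoonup 0$ weakly we have $\Vert K k_{s_j}^l\Vert \to 0$, so
\[
\Vert C_\Phi - K\Vert \geq \limsup_{j} \Vert (C_\Phi - K) k_{s_j}^l\Vert \geq \limsup_{j} \Vert C_\Phi k_{s_j}^l\Vert - \limsup_j \Vert K k_{s_j}^l\Vert = \limsup_{j} \Vert C_\Phi k_{s_j}^l\Vert.
\]
Taking the infimum over compact $K$ and the supremum over all such sequences yields $\Vert C_\Phi\Vert_{\mathcal{A}_\mu^2,e} \geq \limsup_{\Re(s)\to 0}\Vert C_\Phi k_s^l\Vert_{\mathcal{A}_\mu^2}$.

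The third step is to compute $\Vert C_\Phi k_s^l\Vert$ from below using the adjoint. Under the arithmetic hypothesis on $\Phi$ (namely $c_n=0$ when $p^+(n)>l$), Proposition \ref{Noyauxreproduisantspartiels} gives $C_\Phi^*(K_{\mu,s}^l) = K_{\mu,\Phi(s)}^l$. Therefore
\[
\Vert C_\Phi k_s^l\Vert_{\mathcal{A}_\mu^2} \geq \frac{\langle C_\Phi k_s^l, k_s^l\rangle \text{-type bound}}{}\,;
\]
more precisely, $\Vert C_\Phi\Vert \cdot \Vert k_s^l\Vert \geq \Vert C_\Phi k_s^l\Vert$ and by duality $\Vert C_\Phi k_s^l\Vert = \sup_{\Vert g\Vert\leq 1}\vert\langle C_\Phi k_s^l, g\rangle\vert \geq \vert\langle k_s^l, C_\Phi^* k_s^l\rangle\vert / \Vert k_s^l\Vert$ is not quite what I want; instead I use $\Vert C_\Phi k_s^l\Vert = \Vert C_\Phi\Vert$-free lower bound via $\Vert C_\Phi k_s^l \Vert \geq \Vert C_\Phi^* k_s^l\Vert$ is false in general, so the correct route is: $\Vert C_\Phi k_s^l\Vert_{\mathcal{A}_\mu^2} \geq \dfrac{\vert\langle C_\Phi k_s^l, k_{\Phi(s)}^l\rangle\vert}{\Vert k_{\Phi(s)}^l\Vert} = \dfrac{\vert\langle k_s^l, C_\Phi^* k_{\Phi(s)}^l\rangle\vert}{1}$ — but cleanest is to observe directly that for any unit vector $u$, $\Vert C_\Phi u\Vert \geq \vert\langle C_\Phi u, v\rangle\vert$ for any unit $v$, and choosing $u = k_s^l$, $v = k_{\Phi(s)}^l$ we get $\Vert C_\Phi k_s^l\Vert \geq \vert\langle k_s^l, C_\Phi^* k_{\Phi(s)}^l\rangle\vert$. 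Since $C_\Phi^* K_{\mu,\Phi(s)}^l = K_{\mu,\Phi(\Phi(s))}^l$ this is awkward; the genuinely clean choice is to test $C_\Phi$ on $k_s^l$ and pair with $k_s^l$ after noting $\langle C_\Phi K_{\mu,s}^l, K_{\mu,s}^l\rangle = \langle K_{\mu,s}^l, C_\Phi^* K_{\mu,s}^l\rangle = \langle K_{\mu,s}^l, K_{\mu,\Phi(s)}^l\rangle = K_{\mu,\Phi(s)}^l(\overline{s})$. Hence
\[
\Vert C_\Phi k_s^l\Vert_{\mathcal{A}_\mu^2} \geq \frac{\vert K_{\mu,\Phi(s)}^l(\overline{s})\vert}{\Vert K_{\mu,s}^l\Vert^2}.
\]
The final step is the arithmetic identity $K_{\mu,\Phi(s)}^l(\overline{s}) = \sum_{p^+(n)\leq p_l} n^{-\overline{\Phi(s)}-\overline{s}}/w_h(n)$, whose modulus I must bound below by a constant multiple of $\Vert K_{\mu,\Phi(s)}^l\Vert \cdot \Vert K_{\mu,s}^l\Vert$; this is where I expect the main obstacle. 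The point is that $\Phi(s) = c_0 s + \varphi(s)$ with $c_0\geq 1$, so $\Re(\Phi(s)) \to 0$ cannot happen while $\Re(s)$ stays bounded away from $0$ — in fact if $\Re(s)\to 0$ then $\Re(\Phi(s))\to 0$ as well by the Schwarz-type lemma, and one expects $\Re(\Phi(s)) \asymp \Re(s)$ in the relevant regime; combined with a Cauchy–Schwarz / positivity argument on the positive-coefficient kernels (each term $n^{-\Re(s)-\Re(\Phi(s))}/w_h(n)$ is positive, and matching it against $\sqrt{n^{-2\Re(s)}/w_h(n)}\cdot\sqrt{n^{-2\Re(\Phi(s))}/w_h(n)}$) one gets $K_{\mu,\Phi(s)}^l(\overline{s}) \geq \Vert K_{\mu,\Phi(s)}^l\Vert\cdot\Vert K_{\mu,s}^l\Vert \cdot (1+o(1))$ or at least the right ratio in the $\limsup$. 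Dividing through and taking $\limsup_{\Re(s)\to 0}$ then yields
\[
\Vert C_\Phi\Vert_{\mathcal{A}_\mu^2,e} \geq \limsup_{\Re(s)\to 0} \frac{\Vert K_{\mu,\Phi(s)}^l\Vert_{\mathcal{A}_\mu^2}}{\Vert K_{\mu,s}^l\Vert_{\mathcal{A}_\mu^2}},
\]
as claimed. The delicate bookkeeping is controlling the cross term $K_{\mu,\Phi(s)}^l(\overline{s})$ relative to the product of norms uniformly as $\Re(s)\to 0$; I would handle it exactly as in \cite{bayart2003compact}, using the $c_0\geq 1$ hypothesis to ensure $\overline{\Phi(s)}$ and $\overline{s}$ have comparable real parts, so the mixed Dirichlet sum is squeezed between the two "diagonal" sums.
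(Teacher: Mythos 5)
Your first two steps (the normalized partial kernels $k_s^l$ form a weakly null family in the unit ball as $\Re(s)\to 0$, hence $\Vert C_\Phi - K\Vert \geq \limsup_{\Re(s)\to 0} \Vert C_\Phi k_s^l\Vert$ for every compact $K$) are fine and essentially what the paper does. The gap is in your third step. By pairing $C_\Phi k_s^l$ with unit vectors you only reach the quantity $\vert\langle C_\Phi K_{\mu,s}^l, K_{\mu,s}^l\rangle\vert/\Vert K_{\mu,s}^l\Vert^2 = \vert K_{\mu,s}^l(\Phi(s))\vert/\Vert K_{\mu,s}^l\Vert^2$, and to deduce the stated theorem you would need the reverse Cauchy--Schwarz estimate $\vert K_{\mu,s}^l(\Phi(s))\vert \gtrsim \Vert K_{\mu,\Phi(s)}^l\Vert\,\Vert K_{\mu,s}^l\Vert$. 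This is false in general, and it fails precisely in the regime that makes the theorem interesting: the Schwarz-type lemma only gives $\Re(\Phi(s))\geq c_0\Re(s)$, not $\Re(\Phi(s))\asymp\Re(s)$ (take $\Phi(s)=s+1$), and if $\Re(\Phi(s))$ stays bounded away from $0$ while $\Re(s)\to 0$ the mixed sum $\sum_{p^+(n)\leq p_l} n^{-\Re(s)-\Re(\Phi(s))}/w_h(n)$ stays bounded while $\Vert K_{\mu,\Phi(s)}^l\Vert\,\Vert K_{\mu,s}^l\Vert\to\infty$; your bound then decays like $1/\Vert K_{\mu,s}^l\Vert^2$ instead of the required $\Vert K_{\mu,\Phi(s)}^l\Vert/\Vert K_{\mu,s}^l\Vert\sim 1/\Vert K_{\mu,s}^l\Vert$, i.e.\ you prove a strictly weaker inequality. (Indeed the Corollary draws its conclusion exactly when this ratio tends to $0$, so ``$\Re(\Phi(s))$ comparable to $\Re(s)$'' is the opposite of what may be assumed.) Moreover the terms $n^{-\overline{\Phi(s)}-\overline{s}}$ are not positive, so oscillation of the imaginary parts can shrink the cross term further; positivity of moduli does not rescue the argument.

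The missing idea --- which your own remark that ``$\Vert C_\Phi k_s^l\Vert\geq\Vert C_\Phi^* k_s^l\Vert$ is false in general'' nearly points to --- is to run the essential-norm argument on the adjoint from the start: since $\Vert C_\Phi - K\Vert = \Vert C_\Phi^* - K^*\Vert$ and $K^*$ is compact, the same weak-null family gives $\Vert C_\Phi - K\Vert \geq \Vert C_\Phi^*(k_s^l)\Vert - \Vert K^*(k_s^l)\Vert$ and hence $\Vert C_\Phi\Vert_{\mathcal{A}_\mu^2,e}\geq \limsup_{\Re(s)\to 0}\Vert C_\Phi^*(k_s^l)\Vert$. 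Proposition \ref{Noyauxreproduisantspartiels} then computes this norm exactly, $\Vert C_\Phi^*(k_s^l)\Vert = \Vert K_{\mu,\Phi(s)}^l\Vert/\Vert K_{\mu,s}^l\Vert$, with no cross term to estimate. With that substitution your steps 1 and 2 finish the proof; as written, step 3 does not close.
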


\begin{proof}
Let $l \geq 1$. For $s \in \mathbb{C}_+$, we define the following family of functions:
\[ k_{s}(w):= \frac{K_{\mu,s}^{l}(w)}{\Vert K_{\mu,s}^{l} \Vert_{\mathcal{A}_{\mu}^2}}, \qquad \hbox{ where } w \in \mathbb{C}_+. \]

This family is contained in $B_{ \mathcal{A}_{\mu}^2}$ and converges uniformly to $0$ on $ \mathbb{C}_{1/2+ \varepsilon}$ for every $\varepsilon>0$: indeed we have
\[ \lim_{\Re(s) \rightarrow 0 } \Vert K_{\mu,s}^{l} \Vert_{\mathcal{A}_{\mu}^2}^2 = \lim_{\Re(s) \rightarrow 0} K_{\mu,s}^{l}(s) = + \infty \]
and for every $w \in \mathbb{C}_{1/2+ \varepsilon}$:
\[ \vert K_{\mu,s}^{l}(w) \vert \leq \biindice{\sum}{n \geq 1}{p^+(n) \leq l} \frac{n^{-1/2- \varepsilon}}{w_h(n)} < + \infty \]
because the partial reproducing kernels are defined on $ \mathbb{C}_+$. 

Let $K \in \mathcal{K}( \mathcal{A}_{\mu}^2)$, we point out that $K^* \in \mathcal{K}( \mathcal{A}_{\mu}^2)$ as well and by the previous lemma we know that $(k_{s})$ converges weakly to $0$ so $K^*( k_{s})$ is norm convergent to $0$. Then 
\[ \Vert C_{ \Phi} - K \Vert = \Vert C_{\Phi}^{*} - K^{*} \Vert \geq \Vert C_{ \Phi}^{*}(k_{s}) - K^{*}(k_{s}) \Vert_{ \mathcal{A}_{\mu}^2} \geq  \Vert C_{ \Phi}^{*}(k_{s}) \Vert_{ \mathcal{A}_{\mu}^2 }- \Vert K^{*}(k_{s}) \Vert_{ \mathcal{A}_{\mu}^2}\]
and then
\[  \Vert C_{ \Phi} - K \Vert \geq \limsup_{ \Re(s) \rightarrow 0} \Vert C_{ \Phi}^{*}(k_{s}) \Vert_{ \mathcal{A}_{\mu}^2 }.\]
Now with Proposition \ref{Noyauxreproduisantspartiels}, and the fact that the inequality is true for every $K \in \mathcal{K}( \mathcal{A}_{\mu}^2)$, we obtain the result.
\end{proof}

\begin{cor}{\label{condnecessaire}}
Let $\Phi$ a $c_0$-symbol with $c_0 \geq 1$ and $l \geq 1$. Assume that $ \Phi(s)=c_0s + \displaystyle{\sum_{n=1}^{+ \infty} c_n n^{-s}}$ where $c_n=0$ when $p^{+}(n)>l$. Assume that $C_{\Phi} \in \mathcal{K}( \mathcal{A}_{\mu}^2)$, then:
\[ \lim_{ \Re(s) \rightarrow 0} \frac{\Vert K_{\mu, \Phi(s)}^l \Vert_{ \mathcal{A}_{\mu}^2}}{\Vert K_{\mu, s}^l \Vert_{ \mathcal{A}_{\mu}^2}} = 0. \]
In particular if $C_{\Phi} \in \mathcal{A}_{\alpha}^2$ with $\alpha>-1$, it implies that
\[ \lim_{\Re(s) \rightarrow 0} \frac{\Re(s)}{\Re( \Phi(s))} = 0. \]
\end{cor}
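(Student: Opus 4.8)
The plan is to obtain the first limit almost for free from the theorem preceding this corollary, and then to feed into it sharp two–sided estimates for the norms of the partial reproducing kernels of $\mathcal{A}_\alpha^2$. For the first assertion, if $C_\Phi\in\mathcal{K}(\mathcal{A}_\mu^2)$ then $\Vert C_\Phi\Vert_{\mathcal{A}_\mu^2,e}=0$, so the lower bound for the essential norm established just above forces $\limsup_{\Re(s)\to 0}\Vert K^l_{\mu,\Phi(s)}\Vert_{\mathcal{A}_\mu^2}\big/\Vert K^l_{\mu,s}\Vert_{\mathcal{A}_\mu^2}\le 0$; since the ratio is nonnegative, the full limit is $0$.

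For the second assertion I specialise to $\mu=\mu_\alpha$. From the reproducing property $\Vert K^l_{\alpha,s}\Vert_{\mathcal{A}_\alpha^2}^2=K^l_{\alpha,s}(s)$ and $w^\alpha_n=(1+\log n)^{-(\alpha+1)}$ one gets
\[ \Vert K^l_{\alpha,s}\Vert_{\mathcal{A}_\alpha^2}^2 = \biindice{\sum}{n\ge 1}{p^+(n)\le p_l}(1+\log n)^{\alpha+1}\,n^{-2\Re(s)} , \]
and the heart of the matter is to show that
\[ \Vert K^l_{\alpha,s}\Vert_{\mathcal{A}_\alpha^2}^2\ \asymp\ \Re(s)^{-(\alpha+l+1)}\qquad\text{as }\Re(s)\to 0 , \]
with implied constants depending only on $\alpha$ and $l$.

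To prove this estimate, write $\sigma=\Re(s)$ and let $N(t)=\#\{n\ge 1:\ p^+(n)\le p_l,\ \log n\le t\}$; since $N(t)$ counts the points of $\mathbb{Z}_{\ge 0}^l$ in the simplex $\{x\ge 0:\ \sum_i x_i\log p_i\le t\}$, one has $N(t)\asymp t^l$ for $t\ge 1$. For the lower bound I would keep only the integers with $p^+(n)\le p_l$ and $\frac1{2\sigma}\le\log n\le\frac1\sigma$: there are $\gtrsim\sigma^{-l}$ of them, and each contributes a factor $(1+\log n)^{\alpha+1}\gtrsim\sigma^{-(\alpha+1)}$ together with $n^{-2\sigma}\ge e^{-2}$. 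For the upper bound I would group the terms according to $k=\lfloor 2\sigma\log n\rfloor$; using $N(t)\lesssim t^l$, the $k$-th block contributes $\lesssim\sigma^{-(\alpha+l+1)}(k+1)^{\alpha+l+1}e^{-k}$, and $\sum_{k\ge 0}(k+1)^{\alpha+l+1}e^{-k}<\infty$. (Equivalently one can rewrite the sum as $\int_0^\infty(1+t)^{\alpha+1}e^{-2\sigma t}\,dN(t)$ and apply dominated convergence after the substitution $u=2\sigma t$.)

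Finally I would conclude by contraposition. Note that $\Re(\Phi(s))=c_0\Re(s)+\Re(\varphi(s))>\Re(s)$ on $\mathbb{C}_+$, since $c_0\ge 1$ and $\varphi(\mathbb{C}_+)\subset\mathbb{C}_+$ (this is the analogue of Schwarz's lemma used before). If $\Re(s)/\Re(\Phi(s))$ did not tend to $0$, there would exist $\varepsilon_0>0$ and a sequence $(s_k)$ with $\Re(s_k)\to 0$ and $\Re(s_k)/\Re(\Phi(s_k))\ge\varepsilon_0$; together with $\Re(\Phi(s_k))\ge\Re(s_k)$ this forces $\Re(\Phi(s_k))\to 0$ and $\Re(s_k)\le\Re(\Phi(s_k))\le\varepsilon_0^{-1}\Re(s_k)$. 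Plugging the kernel estimate into numerator and denominator then gives $\Vert K^l_{\alpha,\Phi(s_k)}\Vert_{\mathcal{A}_\alpha^2}\big/\Vert K^l_{\alpha,s_k}\Vert_{\mathcal{A}_\alpha^2}\asymp\big(\Re(s_k)/\Re(\Phi(s_k))\big)^{(\alpha+l+1)/2}\ge c\,\varepsilon_0^{(\alpha+l+1)/2}>0$, which contradicts the first assertion. The only genuine obstacle here is the two–sided kernel estimate: the matching lower bound uniform as $\Re(s)\to 0$ requires a real count of $p_l$-smooth integers in the relevant window, whereas everything else is either immediate from the preceding theorem or a routine limiting argument.
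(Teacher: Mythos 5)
Your proof is correct and follows the route the paper intends: the first limit is exactly the statement that the essential-norm lower bound of the preceding theorem vanishes when $C_{\Phi}$ is compact, and the second follows from the two-sided asymptotics $\Vert K^l_{\alpha,s}\Vert_{\mathcal{A}_\alpha^2}^2\asymp \Re(s)^{-(\alpha+l+1)}$ applied along a sequence where the ratio would stay bounded below. The paper itself gives no written proof (the $\mathcal{A}_\alpha^2$ consequence is simply cited as known from Bayart), so your smooth-number count supplying that kernel estimate is a correct filling-in of the step the paper delegates to the reference.
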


\begin{rem}
The result concerning $\mathcal{A}_{\alpha}^2$ is already known (see \cite{bayart2003compact}).
\end{rem}
\subsection{A criterion of compactness}
In this section, we still work on $\mathcal{A}_{\mu}^2$ where $\mu$ is a probability measure such that $d \mu( \sigma) = h d \sigma$ with $h$ a positive continuous function. Recall that 
\[ \beta_h( \sigma) = \int_{0}^{ \sigma} (\sigma-u) h( u) du. \]
In the spirit of \cite{kellay2012compact} (see p12) we introduce a new condition: we say that a weight $\beta$ verifies condition $(\kappa)$ if the function $G=G_{\beta}$ defined by $G(\sigma)= \beta(\sigma)/ \sigma$ (where $\sigma>0$) is such that
\[ \lim_{\eta \rightarrow 0^+} \limsup_{\sigma \rightarrow 0^+} \frac{G(\eta \sigma)}{G(\sigma)} = 0. \]

\begin{exs}
$\hbox{ }$
\begin{enumerate}[(i)]
\item
When $h$ is a non decreasing function then $(\kappa)$ is fullfilled for $\beta_h$: indeed in this case we extend continuously $G$ at $0$ by $G(0)=0$ and it suffices to check that $G$ is a convex function but it is easy to see that
\[ G'( \sigma) = \frac{\displaystyle{\int_{0}^{ \sigma} u h(u) du}}{\sigma^2} \]
and
\[ G''( \sigma) = \frac{2\displaystyle{\int_{0}^{ \sigma}} u(h(\sigma)-h(u))du}{\sigma^3}. \] 
\item
Let $\alpha>-1$, $\beta_{\alpha}$ verifies $(\kappa)$ because $G(\sigma) \approx \sigma^{\alpha+1}$ when $\sigma$ is small enough.
\end{enumerate}
\end{exs}

\begin{proposition}{\label{Kappa}}
Let $\Phi$ a $c_0$-symbol with $c_0 \geq 1$. Assume that $\beta_h$ verifies $(\kappa)$ then if
\[ \lim_{\Re(s) \rightarrow 0} \frac{\Re(s)}{\Re(\Phi(s))} = 0 \]
then
\[  N_{\beta_h, \Phi}(s)= o(\beta_h(\Re(s))) \hbox{ when } \Re(s) \rightarrow 0, \]
hence $C_{\Phi}$ is compact on $\mathcal{A}_{\mu}^2$ if $Im( \varphi)$ is bounded.
\end{proposition}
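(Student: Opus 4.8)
The plan is to bound the generalized Nevanlinna counting function $N_{\beta_h,\Phi}(s)$ by splitting the integral representation from Lemma~\ref{nevanlinnageneralisee} according to whether the translation parameter $u$ is a small fraction of $\Re(s)$ or not. Fix $\eta \in (0,1)$ small and write, for $s \in \mathbb{C}_+$,
\[
N_{\beta_h,\Phi}(s) = \int_0^{\eta \Re(s)} N_{\Phi_u}(s) h(u)\, du + \int_{\eta \Re(s)}^{\Re(s)} N_{\Phi_u}(s) h(u)\, du.
\]
On the first piece I would use the crude bound $N_{\Phi_u}(s) = N_{\Phi_u - u}(s-u) \le \frac{\Re(s)-u}{c_0} \le \frac{\Re(s)}{c_0}$ coming from Littlewood's inequality (as in the proof of Proposition~\ref{Inégalitédelittlewood}), so that the first integral is at most $\frac{\Re(s)}{c_0}\int_0^{\eta\Re(s)} h(u)\,du = \frac{\Re(s)}{c_0}\beta_h'(\eta\Re(s))$; and since $\beta_h'(\eta\Re(s)) = \beta_h'(\eta\Re(s))$ relates to $G$ via $\beta_h(\eta\Re(s)) = \eta\Re(s)\,G(\eta\Re(s))$ together with $\beta_h' $ being increasing, I would in fact rather write the first integral directly as $\le \frac{1}{c_0}\int_0^{\eta\Re(s)}(\Re(s)-u)h(u)\,du \le \frac{1}{c_0}\cdot\frac{\Re(s)}{\eta\Re(s)}\int_0^{\eta\Re(s)}(\eta\Re(s)-u+u)\cdots$, i.e.\ bound it by a multiple of $\beta_h(\eta\Re(s))/\eta$ up to lower-order terms, which is $o(\beta_h(\Re(s)))$ as $\Re(s)\to 0$ once we let $\eta \to 0$, by condition $(\kappa)$ applied to $G = \beta_h/\mathrm{id}$.

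For the second piece, where $u \ge \eta\Re(s)$, the key is to exploit the hypothesis $\Re(s)/\Re(\Phi(s)) \to 0$. Here I would use the sharper form of Littlewood's inequality: if $\Phi(a) = s$ with $\Re(a) = \sigma > u$, then because $\Phi_u - u$ maps $\mathbb{C}_+ \to \mathbb{C}_+$ and moreover $\Re(\Phi(a)) \ge \Re(\Phi_u$-type lower bounds, one has a Schwarz–Pick type estimate comparing $\Re(a) - u$ with $\Re(s)$ and $\Re(\Phi(\cdot))$. More precisely, applying the Littlewood inequality to the self-map $\Phi_u - u$ of $\mathbb{C}_+$ at the point $s - u$ gives $N_{\Phi_u}(s) \le \frac{\Re(s) - u}{c_0}$, but one can do better: using that $C_\Phi$ bounded forces $\Re(\Phi(w)) \ge c_0\Re(w)$ (the Schwarz lemma for $c_0$-symbols noted in the Remark after Theorem~\ref{Continuitécoplusgrandque1}), any $a$ with $\Phi(a) = s$ satisfies $\Re(a) \le \Re(s)/c_0$, and combining with the hypothesis one shows $N_{\Phi_u}(s) \le \frac{\Re(s)-u}{c_0}\cdot \varepsilon(\Re(s))$ where $\varepsilon(\Re(s)) \to 0$; integrating against $h(u)\,du$ over $(\eta\Re(s),\Re(s))$ then yields a bound $\le \frac{\varepsilon(\Re(s))}{c_0}\beta_h(\Re(s))$, which is $o(\beta_h(\Re(s)))$. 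Adding the two pieces and sending first $\Re(s)\to 0$ and then $\eta\to 0$ gives $N_{\beta_h,\Phi}(s) = o(\beta_h(\Re(s)))$.

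Once the counting-function estimate is established, the compactness conclusion is immediate: by Corollary~\ref{Corprincipal}(i), if $Im(\varphi)$ is bounded on $\mathbb{C}_+$ and $N_{\beta_h,\Phi}(s) = o(\beta_h(\Re(s)))$ as $\Re(s)\to 0$, then $C_\Phi$ is compact on $\mathcal{A}_\mu^2$. So the whole content of the proposition is the asymptotic bound on $N_{\beta_h,\Phi}$, and the rest is a citation.

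I expect the main obstacle to be the second integral: making precise how the quantitative hypothesis $\Re(s)/\Re(\Phi(s)) \to 0$ feeds into a gain in Littlewood's inequality uniformly over the relevant range of $u$. The subtlety is that for $u$ comparable to $\Re(s)$ the point $s - u$ is close to the boundary, so one must track the ratio $\Re(s-u)/\Re(\Phi_u(\cdot))$ and argue that it too tends to $0$; this should follow from the hypothesis together with the identity $N_{\Phi_u}(s) = N_{\Phi_u - u}(s-u)$ and the fact that $\Re(\Phi_u(a')) = \Re(\Phi(a'+u))$ grows at least like $c_0(\Re(a')+u) \ge c_0\Re(a')$, but keeping the estimates uniform and correctly placing the $o(1)$ factors is the delicate bookkeeping. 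The first integral, by contrast, is handled purely by condition $(\kappa)$ and is routine once the splitting point $\eta\Re(s)$ is chosen.
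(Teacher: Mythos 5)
Your overall plan (split the integral representation of Lemma \ref{nevanlinnageneralisee} at $u=\eta\Re(s)$, then cite Corollary \ref{Corprincipal}) is a different route from the paper's, and the final citation is exactly right, but the second piece contains a genuine gap. The estimate you rely on there, $N_{\Phi_u}(s)\le \frac{\Re(s)-u}{c_0}\,\varepsilon(\Re(s))$, is asserted (``one shows'') rather than proved: there is no ``sharper form of Littlewood's inequality'' in which the hypothesis $\Re(s)/\Re(\Phi(s))\to 0$ enters as a multiplicative $o(1)$ gain, and you yourself flag this step as the unresolved obstacle. The missing idea is to transfer the hypothesis from $s$ to the \emph{preimages}: if $\Phi(a)=s$, the Schwarz-type lemma gives $\Re(a)\le \Re(s)/c_0$, so $\Re(a)\to 0$ as $\Re(s)\to 0$, and applying the hypothesis at the point $a$ (whose image is $s$) gives $\Re(a)\le \varepsilon'\Re(s)$ for \emph{every} preimage, uniformly, once $\Re(s)$ is small. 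With that observation your second integral is not merely small, it vanishes: for $u\ge \eta\Re(s)$ one has $N_{\Phi_u}(s)=\sum_{\Phi(a)=s,\ \Re(a)>u}(\Re(a)-u)$, and there are no terms as soon as $\varepsilon'<\eta$. (Your first piece can be salvaged from condition $(\kappa)$, e.g.\ via $\beta_h(2\eta\sigma)\ge \eta\sigma\,\beta_h'(\eta\sigma)$, giving a bound of the form $\frac{2}{c_0}\,\beta_h(\Re(s))\,G(2\eta\Re(s))/G(\Re(s))$; your inline computation there is garbled but the idea is sound.)

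Once the preimage step is made explicit, the splitting is unnecessary and the paper's proof is a short direct computation: write $N_{\beta_h,\Phi}(s)=\sum_{\Phi(a)=s}G(\Re(a))\,\Re(a)$ with $G(\sigma)=\beta_h(\sigma)/\sigma$; by the hypothesis applied at $a$ together with $(\kappa)$ (Schwarz guaranteeing $\Re(a)<\delta$ whenever $\Re(s)<\delta$), one gets $G(\Re(a))\le \varepsilon\,G(\Re(\Phi(a)))=\varepsilon\,G(\Re(s))$ for $\Re(s)$ small; then $\sum_{\Phi(a)=s}\Re(a)=N_{\Phi}(s)\le \Re(s)/c_0$ by Littlewood's inequality, whence $N_{\beta_h,\Phi}(s)\le \varepsilon\,\beta_h(\Re(s))/c_0$. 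So either repair your second piece by inserting the preimage argument, or adopt this direct route; as it stands, the key quantitative step of your proof is not justified.
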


\begin{proof}
Let $s \in \mathbb{C}_+$, we have 
\[ \begin{array}{ccl}
N_{\beta_h, \Phi}(s) & = & \displaystyle{\biindice{\sum}{a \in \mathbb{C}_+}{\Phi(a)=s} \beta_h(\Re(a))} \\
& = & \displaystyle{\biindice{\sum}{a \in \mathbb{C}_+}{\Phi(a)=s} G(\Re(a)) \Re(a)}. \\
\end{array} \]
Let $\varepsilon>0$, we know that 
\[ \lim_{\Re(s) \rightarrow 0} \frac{\Re(s)}{\Re(\Phi(s))} = 0 \]
and then by Condition $(\kappa)$, there exists $\delta>0$ such that
\[G(\Re(s)) \leq \varepsilon G(\Re( \Phi(s))) \]
when $\Re(s) < \delta$. Now, thanks to the Schwarz's lemma if $\Phi(a)=s$ and $\Re(s) < \delta$ then $\Re(a)< \delta$ too. Consequently
\[ \begin{array}{ccl}
N_{\beta_h, \Phi}(s) & = & \displaystyle{\biindice{\sum}{a \in \mathbb{C}_+}{\Phi(a)=s} G(\Re(a)) \Re(a)} \\
& \leq & \varepsilon \displaystyle{\biindice{\sum}{a \in \mathbb{C}_+}{\Phi(a)=s} G(\Re(\Phi(a))) \Re(a)} \\
& = & \varepsilon G(\Re(s)) \displaystyle{\biindice{\sum}{a \in \mathbb{C}_+}{\Phi(a)=s} \Re(a)} \\
& = & \varepsilon G(\Re(s)) N_{\Phi}(s) \\
\end{array} \]
for every $s \in \mathbb{C}_+$ such that $\Re(s) < \delta$. Now by Littlewood's inequality we get:
\[N_{\beta_h, \Phi}(s) \leq \frac{\varepsilon G(\Re(s)) \Re(s)}{c_0} = \frac{\varepsilon \beta_h(\Re(s))}{c_0} \]
when $\Re(s) < \delta$ and this proves the result.
\end{proof}

\begin{theoreme}
Let $\alpha>-1$, $\Phi$ a $c_0$-symbol with $c_0 \geq 1$ and $l \geq 1$. Assume that $ \Phi(s)=c_0s + \displaystyle{\sum_{n=1}^{+ \infty} c_n n^{-s}}$ where $c_n=0$ when $p^{+}(n)>l$ and that $Im( \varphi)$ is bounded, then the following assertions are equivalent:
\begin{enumerate}[(i)]
\item
$C_{\Phi}$ is compact on $\mathcal{A}_{\alpha}^2$.
\item
$\displaystyle{\lim_{\Re(s) \rightarrow 0} \frac{\Re(s)}{\Re(\Phi(s))} = 0.}$
\item
$N_{\alpha, \Phi}(s) = o((\Re(s))^{\alpha+2})$ when $\Re(s)$ goes to $0$.
\end{enumerate}
\end{theoreme}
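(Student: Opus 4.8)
The plan is to prove the three equivalences by establishing the chain $(i) \Rightarrow (ii) \Rightarrow (iii) \Rightarrow (i)$, invoking the results already developed in the excerpt for the case of the classical weighted Bergman spaces $\mathcal{A}_\alpha^2$. Throughout I use that for $\mathcal{A}_\alpha^2$ one has $\beta_h = \beta_\alpha$ with $\beta_\alpha(\sigma) \approx \sigma^{\alpha+2}$ near $0$, that the weight $w_n^\alpha = (1+\log n)^{-(\alpha+1)}$ decays more slowly than any negative power, and that $\beta_\alpha$ satisfies condition $(\kappa)$ since $G_{\beta_\alpha}(\sigma) = \beta_\alpha(\sigma)/\sigma \approx \sigma^{\alpha+1}$ for small $\sigma$ (this is exactly the second item of the Examples preceding Proposition \ref{Kappa}).

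First I would prove $(i) \Rightarrow (ii)$. This is a direct application of Corollary \ref{condnecessaire}: the hypotheses on $\Phi$ (a $c_0$-symbol with $c_0 \geq 1$ whose Dirichlet coefficients are supported on integers with $p^+(n) \leq l$) are precisely those required there, and the corollary states that $C_\Phi \in \mathcal{K}(\mathcal{A}_\alpha^2)$ forces $\lim_{\Re(s) \to 0} \Re(s)/\Re(\Phi(s)) = 0$. Next, $(ii) \Rightarrow (iii)$: since $\beta_\alpha$ verifies $(\kappa)$, Proposition \ref{Kappa} applies verbatim and yields $N_{\beta_\alpha, \Phi}(s) = o(\beta_\alpha(\Re(s)))$ as $\Re(s) \to 0$; rewriting $\beta_\alpha(\Re(s)) \approx (\Re(s))^{\alpha+2}$ gives exactly assertion $(iii)$, i.e. $N_{\alpha,\Phi}(s) = o((\Re(s))^{\alpha+2})$. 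Note this implication does not even need the hypothesis that $\mathrm{Im}(\varphi)$ is bounded. Finally, $(iii) \Rightarrow (i)$: assertion $(iii)$ says $N_{\beta_\alpha,\Phi}(s) = o(\beta_\alpha(\Re(s)))$, which is exactly hypothesis $(i)$ of Corollary \ref{Corprincipal}; since $\mathrm{Im}(\varphi)$ is assumed bounded, that corollary gives that $C_\Phi$ is compact on $\mathcal{A}_\alpha^2$, closing the loop.

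The implications themselves are essentially bookkeeping once the right earlier results are cited; the only genuine point requiring care is the translation between the two equivalent formulations of the Nevanlinna condition, namely that $N_{\beta_\alpha,\Phi}(s) = o(\beta_\alpha(\Re(s)))$ and $N_{\alpha,\Phi}(s) = o((\Re(s))^{\alpha+2})$ say the same thing. This follows from the two-sided estimate $\beta_\alpha(\sigma) \approx \sigma^{\alpha+2}$ valid for $\sigma$ close to $0$ (Remark following the definition of $\beta_h$), which is all that is needed since the $o(\cdot)$ statements only concern the regime $\Re(s) \to 0$. I would state this comparison explicitly at the start of the proof so that $(iii)$ can be freely used interchangeably with the hypothesis of Corollary \ref{Corprincipal} and with the conclusion of Proposition \ref{Kappa}.

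The main obstacle, such as it is, lies not in any single step but in verifying that the hypotheses line up across the three cited results: Corollary \ref{condnecessaire} and the necessary-condition theorem require the arithmetic restriction $c_n = 0$ for $p^+(n) > l$ (used to make the partial reproducing kernels $K_{\mu,s}^l$ transform correctly under $C_\Phi^*$), whereas Proposition \ref{Kappa} and Corollary \ref{Corprincipal} require instead that $\mathrm{Im}(\varphi)$ be bounded and that $\beta_h$ satisfy $(\kappa)$. Since the theorem imposes all of these simultaneously, each implication has exactly the inputs it needs, and for the specific weight $\mu_\alpha$ condition $(\kappa)$ is automatic. Thus the proof is just the concatenation of the three arrows above, with the $\beta_\alpha(\sigma) \approx \sigma^{\alpha+2}$ comparison inserted to pass between the $N_{\alpha,\Phi}$ and $N_{\beta_\alpha,\Phi}$ formulations.
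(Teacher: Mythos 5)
Your proposal is correct and follows exactly the paper's own argument, which simply concatenates Corollary \ref{condnecessaire} for $(i)\Rightarrow(ii)$, Proposition \ref{Kappa} (with $\beta_\alpha$ satisfying $(\kappa)$) for $(ii)\Rightarrow(iii)$, and Corollary \ref{Corprincipal} for $(iii)\Rightarrow(i)$, using $\beta_\alpha(\sigma)\approx\sigma^{\alpha+2}$ to identify the two formulations of the Nevanlinna condition. Your added remarks (that $Im(\varphi)$ bounded is not needed for $(ii)\Rightarrow(iii)$, and the explicit hypothesis bookkeeping) are accurate refinements of the same route.
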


\begin{proof}
It suffices to use Corollary \ref{condnecessaire}, Proposition \ref{Kappa} and Corollary \ref{Corprincipal}.
\end{proof}

\begin{rem}
We point out that the condition $(ii)$ does not depend on $\alpha$.
\end{rem}

\begin{defi}
Let $\Phi : \mathbb{C}_+ \rightarrow \mathbb{C}_+$ be an analytic function and $k \in \mathbb{N}$, we say that $\Phi$ is $k-$valent if for every $w \in \mathbb{C}_+$, there exists at most $k$ solutions to $\Phi(s)=w$. If $\Phi$ is $k-$valent for some $k \in \mathbb{N}$ then we shall say that $\Phi$ is finite-valent.
\end{defi}

\begin{theoreme}
Let $l \geq 1$ and $\Phi$ be a $c_0$-symbol with $c_0 \geq 1$ such that $\Phi(s)=c_0s + \sum_{n=1}^{+ \infty} c_n n^{-s}$ where $c_n=0$ if $p^+(n) >l$. Assume that $Im( \varphi)$ is bounded and that $\Phi$ is finite-valent, then the following assertions are equivalent:
\begin{enumerate}[(i)]
\item
$C_{\Phi}$ is compact on $\mathcal{H}^2$.
\item
$\displaystyle{\lim_{\Re(s) \rightarrow 0} \frac{\Re(s)}{\Re(\Phi(s))} = 0.}$
\item
$N_{\Phi}(s) = o(\Re(s))$ when $\Re(s)$ goes to $0$.
\end{enumerate}
\end{theoreme}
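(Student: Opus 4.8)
The plan is to establish the cycle of implications $(iii)\Rightarrow(i)\Rightarrow(ii)\Rightarrow(iii)$, following the scheme of the proof of the corresponding equivalence for $\mathcal{A}_\alpha^2$, but with the finite-valence assumption taking over the role that condition $(\kappa)$ played there (condition $(\kappa)$ being unavailable for $\mathcal{H}^2$). The implication $(iii)\Rightarrow(i)$ is immediate: since $Im(\varphi)$ is bounded on $\mathbb{C}_+$ and $N_\Phi(s)=o(\Re(s))$ as $\Re(s)\to0$, Corollary \ref{Corprincipal}$(ii)$ gives that $C_\Phi$ is compact on $\mathcal{H}^2$.

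For $(i)\Rightarrow(ii)$, I would transcribe the partial-reproducing-kernel argument of the previous subsection to the space $\mathcal{H}^2$. For $s\in\mathbb{C}_+$ and $l\geq1$, set $K_s^l(w)=\sum_{p^+(n)\leq p_l}n^{-\overline{s}-w}$; this is the reproducing kernel of the closed subspace of $\mathcal{H}^2$ spanned by $\{\e_n:\,p^+(n)\leq p_l\}$, and it is well defined on $\mathbb{C}_+$. The analogue of Proposition \ref{Noyauxreproduisantspartiels}, which is again a direct consequence of \cite{bayart2003compact}, Prop.5, gives $C_\Phi^{*}(K_s^l)=K_{\Phi(s)}^l$ under the stated arithmetic restriction on $\Phi$. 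The normalized vectors $k_s=K_s^l/\Vert K_s^l\Vert_{\mathcal{H}^2}$ lie in the unit ball of $\mathcal{H}^2$ and tend to $0$ uniformly on each $\mathbb{C}_{1/2+\varepsilon}$, hence weakly to $0$; testing $C_\Phi-K$ against $k_s$ for $K$ compact yields, exactly as in the $\mathcal{A}_\mu^2$ case,
\[ \Vert C_\Phi\Vert_{\mathcal{H}^2,e}\geq\limsup_{\Re(s)\to0}\frac{\Vert K_{\Phi(s)}^l\Vert_{\mathcal{H}^2}}{\Vert K_s^l\Vert_{\mathcal{H}^2}}. \]
Since $\Vert K_s^l\Vert_{\mathcal{H}^2}^2=K_s^l(s)=\prod_{i=1}^{l}(1-p_i^{-2\Re(s)})^{-1}\sim\prod_{i=1}^{l}(2\Re(s)\log p_i)^{-1}$ as $\Re(s)\to0$, the above ratio tends to $0$ if and only if $\Re(s)/\Re(\Phi(s))\to0$, and since compactness of $C_\Phi$ makes the essential norm vanish, $(ii)$ follows. (In substance this implication is already contained in \cite{bayart2003compact}.)

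The heart of the statement is $(ii)\Rightarrow(iii)$, and this is exactly where the finite-valence hypothesis is needed: the weight $\beta(\sigma)=\sigma$ relevant to $\mathcal{H}^2$ satisfies $G(\sigma)=\beta(\sigma)/\sigma\equiv1$, so it fails condition $(\kappa)$ and Proposition \ref{Kappa} does not apply. Instead one argues directly on the counting function. Fix $\varepsilon>0$ and let $k\geq1$ be a valence bound for $\Phi$. By the "Schwarz's lemma" for $c_0$-symbols, any $a\in\mathbb{C}_+$ with $\Phi(a)=s$ satisfies $c_0\Re(a)\leq\Re(\Phi(a))=\Re(s)$, so $\Re(a)\leq\Re(s)/c_0\to0$ uniformly over such $a$ as $\Re(s)\to0$. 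Applying hypothesis $(ii)$ with parameter $\varepsilon/k$, we obtain $\delta>0$ such that every preimage $a$ of $s$ satisfies $\Re(a)<(\varepsilon/k)\Re(\Phi(a))=(\varepsilon/k)\Re(s)$ as soon as $\Re(s)<c_0\delta$; summing over the at most $k$ preimages,
\[ N_\Phi(s)=\sum_{\substack{a\in\mathbb{C}_+\\ \Phi(a)=s}}\Re(a)\leq k\cdot\frac{\varepsilon}{k}\,\Re(s)=\varepsilon\,\Re(s)\qquad\text{for }\Re(s)<c_0\delta, \]
which is precisely $N_\Phi(s)=o(\Re(s))$. The main obstacle is therefore conceptual rather than computational: recognizing that the $\mathcal{A}_\alpha^2$ route through condition $(\kappa)$ collapses for $\mathcal{H}^2$, and that finite-valence supplies exactly the uniform control over the (now finite) preimage set $\Phi^{-1}(s)\cap\mathbb{C}_+$ that makes the direct estimate work; everything else is a transcription of arguments already in the paper.
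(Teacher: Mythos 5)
Your proposal is correct and follows essentially the same route as the paper: the new content is the implication $(ii)\Rightarrow(iii)$, which you prove exactly as the paper does (Schwarz's lemma to force $\Re(a)$ small, then the hypothesis applied to each of the at most $k$ preimages, your $\varepsilon/k$ bookkeeping versus the paper's $\varepsilon k\,\Re(s)$ being immaterial), while $(iii)\Rightarrow(i)$ is Corollary~\ref{Corprincipal}(ii) and $(i)\Rightarrow(ii)$ is the partial-reproducing-kernel argument that the paper simply cites from \cite{bayart2003compact} but that you correctly reconstruct.
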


\begin{proof}
We only have to proof that $(ii) \Rightarrow (iii)$: the other implications have been proven in \cite{bayart2003compact}. We assume that $\Phi$ is $k$-valent for some $k \geq 0$. Let $\varepsilon>0$ and $s \in \mathbb{C}_+$, as in the proof of Proposition \ref{Kappa} we can show that there exists $\delta>0$ such that for every $\in \mathbb{C}_+$ with $\Re(s)< \delta$:
\[ N_{\Phi}(s) \leq \varepsilon \Re(s) \biindice{\sum}{a \in \mathbb{C}_+}{\Phi(a)=s} 1 \leq \varepsilon k \Re(s).\]

\end{proof}

\begin{rem}
Let $l \geq 1$ and $\Phi$ a $c_0$-symbol with $c_0 \geq 1$ such that $\Phi(s)=c_0s + \sum_{n=1}^{+ \infty} c_n n^{-s}$ where $c_n=0$ if $p^+(n) >l$. If $Im(\varphi)$ is bounded and $C_{\varphi}$ is compact on $\mathcal{H}^2$ then
\[ \lim_{\Re(s) \rightarrow 0} \frac{\Re(s)}{\Re(\Phi(s))} = 0 \]
and consequently $C_{\Phi}$ is compact on $\mathcal{A}_{\alpha}^2$ for any $\alpha>-1$.
\end{rem}

\begin{qu}
Is the previous implication true for general $c_0$-symbols?
\end{qu}  

\section{Compactness and Carleson measures}
In this section, we are going to obtain a sufficient condition of compactness for $C_{\Phi}$ with a "Carleson-measure" condition. We still work on $\mathcal{A}_{\mu}^2$ where $ \mu$ is a probability measure on $(0, + \infty)$ such that $0 \in Supp( \mu)$.

When $\Phi$ is a $c_0$-symbol with $c_0 \geq 1$, it is shown in \cite{bayart2002hardy} that $\Phi$ admits radial limits and so if $\lambda$ is the Lebesgue measure on $\mathbb{R}$,
\[ \Phi^{*}(it):= \lim_{ \sigma \rightarrow 0^+} \Phi( \sigma+it) \hbox{ exists } \lambda \hbox{-almost everywhere.} \]


\begin{nota}
Let $t \in \mathbb{R}$ and $h>0$. We define the Carleson window centered at $it$ and of size $h$ by $H(t,h):= \lbrace s \in \mathbb{C}_+, \, \vert s-it \vert<h \rbrace$.
\end{nota}

\begin{defin}
$\hbox{  }$
\begin{enumerate}[(i)]
\item
We denote $\lambda_{\Phi}$ the pullback measure of $\lambda$ by $\Phi^*$, \textit{ie}, for every open set $\Omega \subset \mathbb{C}_+$:
\[ \lambda_{\Phi}(\Omega):= \lambda( \lbrace t \in \mathbb{R}, \Phi^*(it) \in \Omega \rbrace) .\]
The Carleson function associated to $\lambda_{\Phi}$ is then defined by
\[ \rho_{\Phi}(h):= \sup_{t \in \mathbb{R}} \lambda_{\Phi}(H(t,h)).\]
\item
Let $\lambda_{\mu}= \lambda \otimes \mu$. We denote by $\lambda_{\mu,\Phi}$ the pullback measure  of $\lambda_{\mu}$ by $\Phi$, \textit{ie}, for every open set $\Omega \subset \mathbb{C}_+$:
\[ \lambda_{\mu,\Phi}(\Omega):= \lambda_{\mu}( \lbrace s \in \mathbb{C}_+, \Phi(s) \in \Omega \rbrace) .\]
The Carleson function associated to $\lambda_{\Phi}$ is then defined by
\[ \rho_{\mu,\Phi}(h):= \sup_{t \in \mathbb{R}} \lambda_{\mu,\Phi}(H(t,h)).\]
\end{enumerate}
\end{defin}

In the sequel we shall denote $\beta$ instead of $\beta_h$. The next theorem is the main result of this section. 

\begin{theoreme}{\label{CompNevanCarlDir}} 
Let $\Phi$ be a $c_0$-symbol with $c_0 \geq 1$. There exists $K>0$ (independent of $\Phi$) such that: 
\begin{enumerate}[(i)]
\item
\[\sup_{s \in H(t,h/2)} N_{ \Phi}(s) \leq K \cdot \lambda_{\Phi} \big{(} H(t,2c_0h) \big{)} \quad \hbox{ for every } h \hbox{ small enough.}\]

In particular: 
\[\biindice{\sup}{s \in \mathbb{C}_+}{\Re(s)<h/2} N_{ \Phi}(s) \leq K \cdot \rho_{\Phi} \big{(} 2c_0h\big{)} \quad \hbox{ for every } h \hbox{ small enough.}\]
\item
\[\sup_{s \in H(t,h/2)} N_{\beta, \Phi}(s) \leq K \cdot \lambda_{\mu,\Phi} \big{(} H(t,2c_0h) \big{)} \quad \hbox{ for every } h \hbox{ small enough.}\]

In particular: 
\[\biindice{\sup}{s \in \mathbb{C}_+}{\Re(s)<h/2} N_{ \beta, \Phi}(s) \leq K \cdot \rho_{\mu, \Phi} \big{(} 2c_0h\big{)} \quad \hbox{ for every } h \hbox{ small enough.}\]
\end{enumerate}
\end{theoreme}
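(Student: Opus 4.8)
The plan is to prove $(i)$ first, and then to deduce $(ii)$ from it; the ``in particular'' inequalities are then immediate upon taking the supremum over $t\in\mathbb R$. For the reduction of $(ii)$ to $(i)$: since $\beta_h(\sigma)=\int_0^\sigma(\sigma-u)h(u)\,du$, Lemma~\ref{nevanlinnageneralisee} gives $N_{\beta_h,\Phi}(s)=\int_0^{\Re(s)}N_{\Phi_u}(s)\,h(u)\,du$, and each translate $\Phi_u$ is again a $c_0$-symbol with the same $c_0\ge1$ (its symbol is $c_0s+\bigl(c_0u+\varphi_u(s)\bigr)$, which converges uniformly on every $\mathbb C_\varepsilon$ and maps $\mathbb C_+$ into $\mathbb C_+$). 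Applying $(i)$ to each $\Phi_u$ — the constant $K$ being independent of the symbol, and the threshold on $h$ being uniform since for $u\le\Re(s)$ small $\Phi_u$ is a small perturbation of $\Phi$ — gives $N_{\Phi_u}(s)\le K\,\lambda_{\Phi_u}\bigl(H(t,2c_0h)\bigr)$ for $s\in H(t,h/2)$; it remains to integrate against $h(u)\,du$ and to use the disintegration $\lambda_{\mu,\Phi}=\int_0^{+\infty}\lambda_{\Phi_u}\,h(u)\,du$ of the pull-back of $\lambda\otimes\mu$ (which holds because $\Phi_u^{*}(iv)=\Phi(u+iv)$).

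For $(i)$ the decisive structural remark — and this is where the hypothesis $c_0\ge1$ is crucial — is that the function $\widetilde N_\Phi$ equal to $N_\Phi$ on $\mathbb C_+$ and to $0$ on $\mathbb C\setminus\mathbb C_+$ is subharmonic on the whole plane. Indeed $N_\Phi$ is subharmonic on $\mathbb C_+$ with only possible exceptional point $\Phi(+\infty)=+\infty$, which lies outside $\overline{\mathbb C_+}$; by Littlewood's inequality (Proposition~\ref{Inégalitédelittlewood} with $\beta(\sigma)=\sigma$) one has $N_\Phi(s)\le\Re(s)/c_0\to0$ as $\Re(s)\to0^+$, so $\widetilde N_\Phi$ is continuous across $i\mathbb R$, and the sub-mean value inequality holds trivially at points of $i\mathbb R$ (where the value is $0$). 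Consequently, for every $\rho>0$,
\[
N_\Phi(s)=\widetilde N_\Phi(s)\ \le\ \frac{1}{\pi\rho^{2}}\iint_{D(s,\rho)\cap\mathbb C_+}N_\Phi(w)\,dA(w).
\]
Choosing $\rho$ of size comparable to $c_0h$ so that $D(s,\rho)\cap\mathbb C_+\subset H(t,2c_0h)$ whenever $s\in H(t,h/2)$ (with $t=\Im(s)$, $\Re(s)<h/2$), this reduces $(i)$ to an area estimate of the form
\[
\iint_{H(t,ch)}N_\Phi(w)\,dA(w)\ \le\ K'\,h^{2}\,\lambda_\Phi\bigl(H(t,2c_0h)\bigr),
\]
with $K'$ an absolute constant — the factor $h^{2}$ being exactly what cancels $1/\rho^{2}$.

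This last area estimate is the core of the proof, and the place where I expect the main difficulty to lie; the scheme is the analogue, in the half-plane/Dirichlet-series setting, of the comparison between the (generalized) Nevanlinna counting function and the Carleson function of an analytic self-map in the disk (cf.\ \cite{bayart2003compact},\cite{kellay2012compact}). Using the change-of-variables formula involving the counting function (\cite{federer1996geometric}, exactly as in the proof of Theorem~\ref{Thprincipal}), one rewrites
\[
\iint_{H(t,ch)}N_\Phi(w)\,dA(w)=\iint_{\Phi^{-1}(H(t,ch))}\Re(z)\,\bigl|\Phi'(z)\bigr|^{2}\,dA(z),
\]
and one estimates the right-hand side by a Green / Littlewood--Paley computation on the region $E=\Phi^{-1}(H(t,ch))$. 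The ``Schwarz's lemma'' $\Re(\Phi(z))>c_0\Re(z)$ confines $E$ to a strip $\{\Re(z)<ch/c_0\}$ near the imaginary axis and is responsible for the dilation by a multiple of $c_0$ between the two windows; the part of $\partial E$ lying on $i\mathbb R$ is essentially $\{\,iu:\Phi^{*}(iu)\in\overline{H(t,ch)}\,\}$, whose $\lambda$-measure is bounded by $\lambda_\Phi\bigl(H(t,2c_0h)\bigr)$, and it contributes precisely the term we want.

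The delicate point — the main obstacle — is the contribution of the part of $\partial E$ inside the open half-plane, where $\Phi$ maps onto the ``round'' part of $\partial H(t,ch)$: a plain application of Green's identity only returns this in terms of circular averages of $N_\Phi$ of comparable size, which is too weak on its own. One has to exploit that the weight $\Re(z)$ is of size $\lesssim h/c_0$ throughout $E$, and argue more carefully — iterating the inequality down the dyadic scales, or using the subharmonicity of $\widetilde N_\Phi$ on those circles — so that this inner-boundary contribution is genuinely absorbed into $K'h^{2}\lambda_\Phi$. Once the area estimate is secured, part $(i)$, its ``in particular'' form with $\rho_\Phi$ (by taking $\sup_t$), and then part $(ii)$ follow as explained above.
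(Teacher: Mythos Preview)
Your reduction of $(ii)$ to $(i)$ via Lemma~\ref{nevanlinnageneralisee} and Fubini coincides with the paper's argument and is correct (the paper likewise notes that the smallness threshold on $h$ can be taken uniformly in $u$).

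For $(i)$ your route differs from the paper's and contains a genuine gap. The paper does \emph{not} work directly in $\mathbb C_+$: it conjugates by the Cayley-type maps $\psi_\xi(s)=(s-\xi)/(s+\xi)$, sets $\Theta_\xi=\psi_{c_0\xi}\circ\Phi\circ\psi_\xi^{-1}:\mathbb D\to\mathbb D$, and invokes as a black box the \emph{disk} comparison of Lef\`evre--Li--Queff\'elec--Rodr\'iguez-Piazza (the inequality labelled $\circledast\circledast$ just before the proof), namely $\sup_{z\in S(\xi,h)}N_\phi(z)\le C_\beta\,m_\phi(S(\xi,\beta h))$. One then unwinds $m_{\Theta_\xi}(S(-1,2h/\xi))$ by the change of variable $iu=\psi_\xi^{-1}(e^{i\theta})$, obtaining an integral against the Poisson weight $\xi/\pi(\xi^2+u^2)$ over $\{u:\Phi^*(iu)\in\psi_{c_0\xi}^{-1}(S(-1,2h/\xi))\}$; multiplying through by $\xi$ and letting $\xi\to+\infty$ kills the Poisson kernel and leaves $\lambda_\Phi(H(0,2c_0h))$. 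The general $t$ is reduced to $t=0$ by the translation $\tilde\Phi(s)=\Phi(s+it)-ic_0t$.

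Your subharmonicity-plus-area scheme is the right \emph{kind} of argument --- it is essentially how $\circledast\circledast$ itself is proved in the disk --- but you explicitly leave the core step open: the ``delicate point'' (the contribution of the inner boundary $\Phi^{-1}(\partial H)\cap\mathbb C_+$ in your Green/area computation) \emph{is} the whole difficulty, and ``iterating down dyadic scales'' or ``using subharmonicity on those circles'' is a hope, not a proof. As written, nothing prevents that inner contribution from being as large as the quantity you are trying to bound. Either you carry this step out in full --- which amounts to reproving the Lef\`evre et al.\ theorem directly in the half-plane, with all the attendant work --- or you should conjugate to the disk and quote it, as the paper does.
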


\begin{cor}{\label{Corcarleson}}
Let $\Phi$ be a $c_0$-symbol with $c_0 \geq 1$ such that $Im( \varphi)$ is bounded on $\mathbb{C}_+$. Then there exists $C>0$ such that:

\begin{enumerate}[(i)]
\item
\[ \Vert C_{\Phi} \Vert_{\mathcal{H}^2,e} \leq C \limsup_{h \rightarrow 0} \frac{\rho_{\Phi}(h)}{h}. \]
In particular if $\rho_{\Phi}(h) = o(h)$ when $h \rightarrow 0$ then  $C_{\Phi}$ is compact on $\mathcal{H}^2$.
\item
\[ \Vert C_{\Phi} \Vert_{\mathcal{A}_{\mu}^2,e} \leq C \limsup_{h \rightarrow 0} \frac{\rho_{\mu,\Phi}(h)}{\beta(h)}. \]
In particular if $\rho_{\mu,\Phi}(h) = o(\beta(h))$ when $h \rightarrow 0$ then  $C_{\Phi}$ is compact on $\mathcal{A}_{\mu}^2$.
\end{enumerate}
\end{cor}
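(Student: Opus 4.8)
The plan is to combine Theorem \ref{CompNevanCarlDir} with the essential-norm estimates of Theorem \ref{Thprincipal}, the point being simply to re-express the $\limsup$ of the (generalized) Nevanlinna counting function near the imaginary axis in terms of the Carleson function. I will treat case $(ii)$; case $(i)$ is identical after replacing $N_{\beta,\Phi}$, $\beta$, $\rho_{\mu,\Phi}$ by $N_\Phi$, $\Re(\cdot)$, $\rho_\Phi$ and using the second displayed estimate of Theorem \ref{Thprincipal}.

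First I would unwind the $\limsup$ over $\Re(s)\to 0$ appearing in Theorem \ref{Thprincipal}. By the "In particular" part of Theorem \ref{CompNevanCarlDir}(ii), for every $h$ small enough one has
\[ \sup_{\substack{s \in \mathbb{C}_+ \\ \Re(s)<h/2}} N_{\beta,\Phi}(s) \leq K \cdot \rho_{\mu,\Phi}(2c_0 h). \]
Dividing by $\beta(h)$ and then taking $h\to 0$ gives
\[ \limsup_{\Re(s)\to 0} \frac{N_{\beta,\Phi}(s)}{\beta(\Re(s))} \leq \limsup_{h\to 0} \frac{K\,\rho_{\mu,\Phi}(2c_0 h)}{\beta(h)}, \]
where I also use that $\beta=\beta_h$ is increasing, so $N_{\beta,\Phi}(s)/\beta(\Re(s)) \le \big(\sup_{\Re(s')<h/2}N_{\beta,\Phi}(s')\big)/\beta(\Re(s))$ is not quite what I want — more carefully, for $\Re(s)<h/2$ I bound the numerator by $K\rho_{\mu,\Phi}(2c_0 h)$ and the denominator from below, so I must compare $\beta(\Re(s))$ with $\beta(h)$. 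Here the cleanest route is: fix $h$, let $\Re(s)$ range in $(0,h/2)$, and note $\beta(\Re(s))$ can be small, so instead I take $s$ with $\Re(s)$ comparable to $h$, i.e. I only need the estimate along a sequence $\Re(s_n)=h_n/2$. Since $\beta(h/2)\ge c\,\beta(h)$ for $h$ small (using $\beta(\sigma)\approx$ a regularly varying function near $0$, or directly $\beta(h/2)/\beta(h)\to 1/4$ in the model case and $\ge$ a positive constant in general because $\beta$ is convex with $\beta(0)=0$, whence $\beta(h/2)\ge \tfrac12\beta(h)$ is false in general but $\beta(h/2)\ge c\beta(h)$ holds by convexity giving $\beta(h/2)\le\tfrac12\beta(h)$... ) — this is the one technical point to get right: by convexity of $\beta$ and $\beta(0)=0$ we have $\beta(h/2)\ge \tfrac12\beta(h)$ is WRONG; convexity gives $\beta(h/2)\le\tfrac12\beta(h)$, but a lower bound $\beta(h/2)\ge c\beta(h)$ with $c>0$ follows from the fact that $\beta'(h/2)h/2 \le \beta(h/2)$ combined with $\beta(h)\le \beta(h/2)+\beta'(h)\cdot h/2 \le \beta(h/2)+ (h/2)\|h\|_1 \beta'(h)/\beta'(h)$... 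In any case $\beta(h)/\beta(h/2)$ is bounded above as $h\to 0$ (e.g. because $\beta(h)\le h\,\beta'(h)\le 2\,\beta'(h/2)\cdot h \le \ldots$, or simply $\beta_\alpha(h)\approx h^{\alpha+2}$ and in general $\beta(h)\le h\int_0^h h(u)du \le h\cdot 2\int_0^{h/2}\ldots$). I would state this as a one-line lemma: there is $C_0>0$ with $\beta(h)\le C_0\,\beta(h/2)$ for all small $h$.

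Granting that, the displayed chain gives
\[ \limsup_{\Re(s)\to 0}\frac{N_{\beta,\Phi}(s)}{\beta(\Re(s))} \;\le\; C_0 K \limsup_{h\to 0}\frac{\rho_{\mu,\Phi}(2c_0 h)}{\beta(2c_0 h)}\cdot\frac{\beta(2c_0 h)}{\beta(h)} \;\le\; C' \limsup_{h\to 0}\frac{\rho_{\mu,\Phi}(h)}{\beta(h)}, \]
using once more that $\beta(2c_0 h)/\beta(h)$ stays bounded as $h\to 0$ (same convexity/regular-variation argument, now an upper bound, which is the easy direction) and a change of variable $h\mapsto 2c_0 h$ in the $\limsup$. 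Plugging this into Theorem \ref{Thprincipal} yields
\[ \|C_\Phi\|_{\mathcal{A}_\mu^2,e} \le \Big(2\sup_{\mathbb{C}_+}|Im(\varphi)|+c_0\Big)\cdot C'\limsup_{h\to 0}\frac{\rho_{\mu,\Phi}(h)}{\beta(h)} =: C\limsup_{h\to 0}\frac{\rho_{\mu,\Phi}(h)}{\beta(h)}, \]
with $C$ depending only on $c_0$ and $\sup|Im(\varphi)|$ (both finite by hypothesis). Finally, if $\rho_{\mu,\Phi}(h)=o(\beta(h))$ the right-hand side is $0$, so $\|C_\Phi\|_{\mathcal{A}_\mu^2,e}=0$ and $C_\Phi$ is compact; this is exactly Corollary \ref{Corprincipal}(i) fed with the new hypothesis. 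The main obstacle, such as it is, is the bookkeeping with the dilation factors $2$ and $2c_0$ inside $\beta$ and $\rho_{\mu,\Phi}$ — i.e. verifying that $\beta$ is "doubling near $0$" so that these constants can be absorbed; everything else is a direct substitution of Theorem \ref{CompNevanCarlDir} into Theorem \ref{Thprincipal}.
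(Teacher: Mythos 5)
Your overall strategy is the same as the paper's: feed the Carleson-window estimate of Theorem \ref{CompNevanCarlDir} into the essential-norm bound of Theorem \ref{Thprincipal}. For part (i) your argument works, and the paper's own proof shows how to avoid the bookkeeping you struggle with: instead of the global ``in particular'' form, it applies the windowed estimate pointwise, taking for each $s$ with $\Re(s)$ small $t=\Im(s)$ and $h=4\Re(s)$, so that $N_{\Phi}(s)\le\sup_{s'\in H(\Im(s),2\Re(s))}N_{\Phi}(s')\le K\,\rho_{\Phi}\bigl(8c_0\Re(s)\bigr)$, and then divides by $\Re(s)$; since the denominator is linear, the factor $8c_0$ exits exactly and no comparison lemma is needed.

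For part (ii) your proposal has a genuine gap, and it sits exactly at the point you flag. The ``one-line lemma'' $\beta(h)\le C_0\,\beta(h/2)$ for small $h$ is never proved: your justification oscillates between contradictory convexity claims, and convexity together with $\beta(0)=0$ actually gives $\beta(h/2)\le\tfrac12\beta(h)$, i.e.\ the wrong direction. Worse, the lemma is false for general admissible $\mu$: for a density proportional to $e^{-1/u-u}$ (positive, continuous, integrable, with $0$ in the support) one has $\beta(t)=e^{-(1+o(1))/t}$, so $\beta(h)/\beta(h/2)\to+\infty$ as $h\to0$. Hence the step passing from $\limsup_{h\to0}\rho_{\mu,\Phi}(2c_0h)/\beta(h)$ to $C'\limsup_{h\to0}\rho_{\mu,\Phi}(h)/\beta(h)$ cannot be justified in the generality you claim; it is valid only under a doubling-type condition on $\beta$ near $0$, e.g.\ in the $\mu_\alpha$ case where $\beta_\alpha(\sigma)\approx\sigma^{\alpha+2}$. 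In fairness, the paper writes out only the proof of (i), and its statement of (ii) implicitly needs the same comparability of $\beta$ at scales $\Re(s)$ and $8c_0\Re(s)$ (the pointwise choice $h\approx4\Re(s)$ does not remove this in (ii)), so you have correctly located the delicate point; but your resolution of it, asserting the doubling lemma for every admissible weight, is incorrect as written, and the argument should either add a doubling/regularity hypothesis on $\beta$ or be restricted to weights of polynomial type such as $\beta_\alpha$. A minor further point: taking $\Re(s_n)=h_n/2$ exactly is incompatible with the strict inequality $\Re(s)<h/2$ in the ``in particular'' statement; use $h=(2+\varepsilon)\Re(s)$ or the windowed form as in the paper.
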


\begin{ex}
When $\mu=\mu_{\alpha}$, we denote $N_{\alpha}$ (resp. $\rho_{\alpha, \Phi}$) the corresponding generalized Nevanlinna counting function (resp. the Carleson function). We know that in that case, $\beta_{\alpha}(\sigma) \approx \sigma^{\alpha+2}$ when $\sigma$ is small and then if $Im( \varphi)$ is bounded:
\[ \Vert C_{\Phi} \Vert_{e, \mathcal{A}_{\alpha}^2} \leq C \limsup_{h \rightarrow 0} \frac{\rho_{\alpha,\Phi}(h)}{h^{\alpha+2}}. \]
\end{ex}

\begin{proof}[Proof of Cor.\ref{Corcarleson}]
We only give the proof of $(i)$. Let $s \in \mathbb{C}_{+}$ with $\Re(s)$ small enough, we have
\[ N_{\Phi}(s) \leq \sup_{s' \in H(Im(s),2\Re(s))} N_{ \Phi}(s') \leq K \cdot \rho_{\Phi} \big{(} 8c_0\Re(s)\big{)} \]
by theorem \ref{CompNevanCarlDir}. Then we obtain
\[ \frac{ N_{\Phi}(s)}{\Re(s)} \leq 8c_0 K \cdot \frac{\rho_{\Phi} \big{(} 8c_0 \Re(s) \big{)}}{8c_0 \Re(s)}\]
and so the result.
\end{proof}

In order to prove Theorem \ref{CompNevanCarlDir}, we need to recall some facts about Carleson measures and the classical Nevanlinna counting function on the unit disk.

Let $\phi: \mathbb{D} \rightarrow \mathbb{D}$ be an analytic self-map. The Nevanlinna counting function of $\phi$ is defined for $w \in \phi( \mathbb{D})$ by
\[ N_{ \phi}(w) = \biindice{\sum}{ z \in \mathbb{D}}{ \phi(z) = w} \log(1/\vert z \vert). \]
When $w \notin \phi( \mathbb{D})$, $N_{ \phi}(w)=0$ by convention.

We denote by $m_{ \phi}$ the pullback measure of the Lebesgue measure on the unit circle by $ \phi^{*}$, the boundary values function of $ \phi$ (which exists because $\phi \in H^{\infty}( \mathbb{D})$). By abusing the notation, we still denote $\phi$ instead of $\phi^*$.

For $\eta \in \mathbb{T}$ and $h>0$, the Carleson window $S(\eta ,h)$ is defined by
\[ S( \eta,h):= \lbrace z \in \mathbb{D}, \, \vert z- \eta \vert <h \rbrace. \]

We shall make a crucial use of the following result (see \cite{lefevre2011nevanlinna},Th1.1):

Let $ \phi: \mathbb{D} \rightarrow \mathbb{D}$ an analytic self-map. For every $\beta>1$, there exists a universal constant $C_{ \beta}>0$ such that:
\[ (1/C_{\beta}) m_{ \phi}(S( \xi, h)) \leq \sup_{z \in S( \xi,h)} N_{ \phi}(z) \leq C_{ \beta} m_{ \phi}(S( \xi, \beta h)) \quad \quad \quad \circledast \circledast\]
for every $0<h< (1- \vert \phi(0) \vert)/ \beta$ and every $\xi \in \mathbb{T}$.

\begin{proof}[Proof of Th.\ref{CompNevanCarlDir}(i)] $\hbox{  }$

\noindent $\rhd$ Step one: We prove the result when $t=0$.

We shall use a family of functions $(\psi_{\xi})_{\xi>0}$ where $\psi_{\xi}$ is defined by
 \begin{equation*}
\left\lbrace
\begin{array}{cccc}
\psi_{\xi}:  &\mathbb{C}_+ & \longrightarrow & \mathbb{D}\\
\hbox{  }     & s           & \longmapsto     & \displaystyle{\frac{s-\xi}{s+\xi}}.
\end{array}\right.
\end{equation*}

\begin{claim}
For every $h>0$, $H(0,h/2) \subset \psi_{c_0 \xi}^{-1}( S( -1, h/ \xi))$.

Let us prove this fact:
\[ \begin{array}{ccccc}
s \in  \psi_{c_0 \xi}^{-1}( S( -1, h/ \xi)) & \Leftrightarrow &\psi_{c_0 \xi}(s) \in S( -1, h/ \xi) & \Leftrightarrow & \displaystyle{\bigg{ \vert } \frac{s-c_0 \xi}{s+c_0 \xi} + 1 \bigg{ \vert} < \frac{h}{\xi}} \\
&&&& \\ 
& \Leftrightarrow & \displaystyle{\bigg{ \vert } \frac{2s}{s+c_0 \xi}\bigg{ \vert} < \frac{h}{\xi}} &  \Leftrightarrow & \vert s \vert < \displaystyle{\frac{h}{2} \bigg{\vert} \frac{s+c_0\xi}{\xi} \bigg{ \vert}}. \\ 
\end{array} \]
Now if $s \in H(0,h/2)$ then $\vert s \vert < \frac{h}{2}$ and it suffices to remark that for every $\xi>0$,
\[ \frac{\vert s+c_0 \xi \vert}{\xi} \geq \frac{\Re(s+c_0 \xi)}{\xi} \geq c_0 \geq 1,\]
and this justifies the claim.
\end{claim}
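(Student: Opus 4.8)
The plan is to reduce the set-membership $s \in \psi_{c_0\xi}^{-1}(S(-1,h/\xi))$ to an explicit inequality on $s$ and then verify that this inequality holds throughout $H(0,h/2)$. First I would unwind the definitions of the preimage and of the Carleson window: by construction $s \in \psi_{c_0\xi}^{-1}(S(-1,h/\xi))$ means $\psi_{c_0\xi}(s) \in S(-1,h/\xi)$, which, since $S(-1,h/\xi) = \lbrace z \in \mathbb{D},\ \vert z+1\vert < h/\xi \rbrace$, is exactly the condition $\vert \psi_{c_0\xi}(s) + 1 \vert < h/\xi$ (the requirement $\psi_{c_0\xi}(s)\in\mathbb{D}$ being automatic because $\psi_{c_0\xi}$ maps $\mathbb{C}_+$ into $\mathbb{D}$).

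Next I would compute $\psi_{c_0\xi}(s) + 1$ directly from $\psi_{c_0\xi}(s) = (s - c_0\xi)/(s + c_0\xi)$, which gives $\psi_{c_0\xi}(s) + 1 = 2s/(s + c_0\xi)$. Thus the membership condition becomes $2\vert s\vert / \vert s + c_0\xi\vert < h/\xi$, equivalently $\vert s \vert < \tfrac{h}{2}\cdot \vert s + c_0\xi\vert/\xi$. At this stage the claim has been turned into a purely elementary inequality to be checked on $H(0,h/2)$.

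Finally I would take $s \in H(0,h/2)$, so that $\vert s\vert < h/2$, and show it satisfies the displayed inequality. It suffices to bound the factor $\vert s + c_0\xi\vert/\xi$ from below by $1$: since $s \in \mathbb{C}_+$ has $\Re(s) > 0$ and $c_0 \geq 1$, one has $\vert s + c_0\xi\vert \geq \Re(s + c_0\xi) = \Re(s) + c_0\xi \geq c_0\xi \geq \xi$, whence $\vert s+c_0\xi\vert/\xi \geq c_0 \geq 1$. Combining, $\vert s\vert < h/2 \leq \tfrac{h}{2}\cdot \vert s+c_0\xi\vert/\xi$, which is precisely the required inequality and establishes the inclusion.

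I do not expect a serious obstacle here: the computation is elementary, and the only place where the hypotheses enter is the lower estimate on $\vert s + c_0\xi\vert/\xi$, where both the half-plane condition $\Re(s) > 0$ defining $\mathbb{C}_+$ and the assumption $c_0 \geq 1$ are used to force this factor to be at least $1$. Conceptually, the inner radius $h/2$ of the window $H(0,h/2)$ is calibrated exactly so that this factor, being no smaller than $1$, compensates for the coefficient $h/2$ and delivers membership in $S(-1,h/\xi)$; the later steps of the theorem will re-center this at an arbitrary $it$ via a vertical translation.
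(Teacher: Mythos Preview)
Your argument is correct and is essentially identical to the paper's own proof: both unwind the definition of $\psi_{c_0\xi}^{-1}(S(-1,h/\xi))$ to the inequality $\vert s\vert < \tfrac{h}{2}\,\vert s+c_0\xi\vert/\xi$, and then use $\vert s+c_0\xi\vert/\xi \geq \Re(s+c_0\xi)/\xi \geq c_0 \geq 1$ together with $\vert s\vert < h/2$ to conclude.
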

 
Let $h>0$ and $s \in H(0,h/2)$. Then $w_{\xi}:= \psi_{c_0 \xi}(s) \in S(-1,h/\xi)$. Assume that $s \in \Phi( \mathbb{C}_+)$ (if it is not the case, there is nothing to prove) and let $s_1, \dots , s_n \in \mathbb{C}_+$ such that $ \Phi(s_i) = s$ for every $i=1, \dots , n$. \\

For $\xi >0$, we define the analytic self-map $ \Theta_{ \xi}: \mathbb{D} \rightarrow \mathbb{D}$ by $ \Theta_{\xi} = \psi_{c_0 \xi} \circ \Phi \circ \psi_{\xi}^{-1}$. We note that $\Theta_{\xi}( \psi_{\xi}(s_i)) = \psi_{c_0 \xi} ( \Phi(s_i)) = \psi_{c_0 \xi}(s)=w_{\xi}$ and so
\[ \sum_{i=1}^{n} \log(1/ \vert \psi_{\xi}(s_i) \vert) \leq N_{ \Theta_{\xi}}(w_{\xi}). \]
For $i \in \lbrace 1, \dots , n \rbrace$ we have 
\[ \log(1/ \vert \psi_{\xi}(s_i) \vert) = \frac{1}{2} \log \bigg{ \vert } \frac{\vert s_i \vert^2 + 2 \xi \Re(s_i) + \xi^2}{\vert s_i \vert^2 - 2 \xi \Re(s_i) + \xi^2} \bigg{ \vert} = \frac{1}{2} \log \bigg{ \vert } 1+ \frac{4\xi \Re(s_i)}{\vert s_i \vert^2 - 2 \xi \Re(s_i) + \xi^2} \bigg{ \vert}. \]
Let $\varepsilon> 0$. For $\xi$ large enough we obtain that for every $i \in \lbrace 1, \dots , n \rbrace$:
\[ 2(1- \varepsilon) \frac{\Re(s_i)}{\xi} \leq \log(1/ \vert \psi_{\xi}(s_i) \vert). \]
And so 
\[ 2(1- \varepsilon) \sum_{i=1}^{n} \frac{\Re(s_i)}{\xi} \leq N_{ \Theta_{\xi}}(w_{\xi}). \]
Now using $\circledast \circledast$ with $\beta=2$ and remembering that $w_{\xi} \in S(-1, h/ \xi)$ for every $\xi>0$, we obtain
\[ N_{ \Theta_{\xi}}(w_{\xi}) \leq C_{2} m_{ \Theta_{\xi}}(S(-1, 2h/\xi)) \]
for every $0<h/\xi< (1- \vert \Theta_{\xi}(0) \vert)/2$. Point out that 
\[ \Theta_{\xi}(0) = \psi_{c_0 \xi}( \Phi( \xi)) = \frac{\varphi(\xi)}{2c_0\xi+ \varphi(\xi)} \]
and this quantity goes to $0$ when $\xi$ goes to infinity. So we define
\[ m_0 = \inf_{ \xi > 1} \xi(1- \vert \Theta_{\xi}(0) \vert)/2 >0. \]
It is clear that $m_0 \leq 1/2$. Let us point out that $m_0=m_0( \varphi) = m_0( \varphi(\cdot + i \tau))$ for any $\tau \in \mathbb{R}$ (this remark will be useful in step $2$). When $h < m_0$, we get
\[ N_{ \Theta_{\xi}}(w_{\xi}) \leq C_{2} m_{ \Theta_{\xi}}(S(-1, 2h/\xi)) \]
and
\[ \begin{array}{ccl}
m_{ \Theta_{\xi}}(S(-1, 2h/ \xi)) &= & \displaystyle{m( \lbrace \eta \in \mathbb{T}, \Theta_{\xi}(\eta) \in S(-1, 2h/ \xi) \rbrace)} \\
& = & \displaystyle{m( \lbrace \eta \in \mathbb{T}, \Psi_{c_0 \xi} \circ \Phi^* \circ \psi_{\xi}^{-1}(\eta) \in S(-1, 2h/ \xi) \rbrace)} \\
&= &  \displaystyle{\int_{ \lbrace \Psi_{c_0 \xi} \circ \Phi^* \circ \psi_{\xi}^{-1}(e^{i \theta}) \in S(-1, 2h/ \xi) \rbrace)} \, \,  \, \frac{d\theta}{2 \pi}}. \\
\end{array}\]
Now we make the following change of variables $iu = \psi_{ \xi}^{-1}(e^{i \theta})$:
\[ \begin{array}{ccl}
m_{ \Theta_{\xi}}(S(-1, 2h/ \xi)) & = &\displaystyle{\int_{ \lbrace \Phi^*(iu) \in \Psi_{c_0 \xi}^{-1} (S(-1, 2h/ \xi)) \rbrace} \, \, \frac{\xi}{\pi( \xi^2+u^2)} du} \\
& = &\displaystyle{\int_{ \lbrace \vert \Phi^*(iu) \vert < h \frac{\vert \Phi^*(iu) + c_0 \xi \vert}{\xi} \rbrace} \, \, \frac{\xi}{\pi( \xi^2+u^2)} du}. \\
\end{array} \]
Then for $\xi > 1$ large enough we obtain
\[ 2(1- \varepsilon) \sum_{i=1}^{n} \frac{Re(s_i)}{\xi} \leq C_2 \int_{ \lbrace \vert \Phi^{*}(iu) \vert < h \frac{\vert \Phi^*(iu) + c_0 \xi \vert}{\xi} \rbrace} \, \, \frac{\xi}{\pi( \xi^2+u^2)} du \]
and then 
\[  2(1- \varepsilon) \sum_{i=1}^{n} \Re(s_i) \leq C_2 \int_{ \lbrace \vert \Phi^{*}(iu) \vert < h \frac{\vert \Phi^{*}(iu) + c_0 \xi \vert}{\xi} \rbrace}  \, \, \frac{du}{\pi}. \]
But if $\vert \Phi^*(iu) \vert < h \frac{\vert \Phi^*(iu) + c_0 \xi \vert}{\xi}$ then $ \vert \Phi^{*}(iu) \vert < \frac{c_0h}{(1-h/\xi)} < \frac{c_0h}{(1-h)} <2c_0h$ since $h<m_0 \leq 1/2$. Then
\[ 2(1- \varepsilon) \sum_{i=1}^{n} Re(s_i) \leq C_2 m_{\Phi}(H(0,2c_0h)). \]
Since $n$ is arbitrary we get:
\[ 2(1- \varepsilon) N_{\Phi}(s) \leq C_2 m_{\Phi}(H(0,2c_0h)). \]
for any $\varepsilon>0$ and the first step is proved. \\

$\rhd$ Step two: we prove now the result when $t \neq 0$.

We have
\[ \sup_{s \in H(c_0t,h/2)} N_{ \Phi}(s) = \sup_{s \in H(0,h/2)} N_{ \Phi}(s+ic_0t). \]
But if $\tilde{\Phi}(s):= c_0s + \varphi(s+it)=\Phi(s+it)-ic_ot$, we obtain for $s \in H(0,h/2)$:
\[\begin{array}{cclcl}
N_{ \Phi}(s+ic_0t) &=& \displaystyle{ \biindice{\sum}{a \in \mathbb{C}_+}{\Phi(a)=s+ic_0t} \Re(a)} 
& = &\biindice{\sum}{a \in \mathbb{C}_+}{c_0(a-it) + \varphi(it+ a-it)=s} \Re(a-it) \\
& = & \biindice{\sum}{a' \in \mathbb{C}_+}{\tilde{\Phi}(a')=s} \Re(a')& =& N_{\tilde{\Phi}}(s). \\
\end{array}\]
Applying step one to $\tilde{\Phi}$, we get
\[ \sup_{s \in H(0,h/2)} N_{ \Phi}(s+ic_0t) = \sup_{s \in H(0,h/2)} N_{ \tilde{\Phi}}(s) \leq K \lambda_{\tilde{\Phi}}(H(0,2c_0h))\]
for every $h$ small enough. Now it suffices to remark that
\[ \begin{array}{ccl}
\lambda_{\tilde{\Phi}}(H(0,2c_0h)) &= & \displaystyle{\lambda( \lbrace t' \in \mathbb{R}, \, \tilde{\Phi}^{*}(it') \in H(0,2c_0h) \rbrace) }\\
&=& \displaystyle{ \lambda( \lbrace t' \in \mathbb{R}, \, \vert \Phi^{*}(it+it') - ic_0t \vert<2c_0h \rbrace)}  \\
& = & \displaystyle{\lambda( \lbrace t'' \in \mathbb{R}, \, \vert \Phi^{*}(it'') - ic_0t \vert<2c_0h \rbrace)}\\
& = &\lambda_{\Phi}(H(c_0t,2c_0h)). \\
\end{array}\]

\end{proof}

\begin{proof}[Proof of Th.\ref{CompNevanCarlDir}(ii)]
By Lemma \ref{nevanlinnageneralisee}, we know that
\[ N_{\beta_h, \Phi}(s) = \int_{0}^{\Re(s)} N_{\Phi_u}(s) h(u) du. \]
Now in the proof of $(i)$, we point out that we can choose the same $m_0$ for $\Phi_{u}$ for every $u \geq 0$ and so for $s \in H(t,\tilde{h}/2)$ with $\tilde{h}<m_0$ we have
\[ N_{\beta_h, \Phi}(s)  \leq K \int_{0}^{\Re(s)} \lambda_{\Phi_u}(H(t,2c_0 \tilde{h})) h(u) du \]
\[ \leq K \int_{0}^{+ \infty} \lambda ( \lbrace t' \in \mathbb{R}, \Phi(u+it') \in H(t,2c_0 \tilde{h}) \rbrace) h(u) du = K \lambda_{\mu, \Phi}(H(t,2c_0 \tilde{h}) \]
thanks to the Fubini's theorem.
\end{proof}

\begin{small}
\noindent \textbf{Acknowledgements.} I would like to thank Pascal Lef\`evre for his advice regarding this paper. 

\end{small}

\begin{footnotesize}
\nocite{*}
\bibliography{biblio2}

\begin{thebibliography}{10}

\bibitem{bailleullefevre2013}
M.~Bailleul and P.~Lef{\`e}vre.
\newblock Some {B}anach spaces of {D}irichlet series.
\newblock {\em Submitted}, 2013.

\bibitem{bayart2002hardy}
F.~Bayart.
\newblock Hardy spaces of {D}irichlet series and their composition operators.
\newblock {\em Monatshefte f{\"u}r Mathematik}, 136(3):203--236, 2002.

\bibitem{bayart2003compact}
F.~Bayart.
\newblock Compact composition operators on a {H}ilbert space of {D}irichlet
  series.
\newblock {\em Illinois Journal of Mathematics}, 47(3):725--743, 2003.

\bibitem{bohr1913uber}
H.~Bohr.
\newblock Lesung des absoluten konvergenzproblems einer allgemeinen klasse
  dirichletscher reihen. (german).
\newblock {\em Acta Math.}, pages 197--240, 1913.

\bibitem{carlson1922}
F.~Carlson.
\newblock Contributions {\`a} la th{\'e}orie des s{\'e}ries de {D}irichlet,
  note i.
\newblock {\em Ark. Math}, 16(18):1--19, 1922.

\bibitem{cole1986representing}
B.J. Cole and T.W Gamelin.
\newblock Representing measures and {H}ardy spaces for the infinite polydisk
  algebra.
\newblock {\em Proceedings of the London Mathematical Society}, 3(1):112--142,
  1986.

\bibitem{maccluer1995composition}
C.~Cowen and BB. MacCluer.
\newblock {\em Composition operators on spaces of analytic functions},
  volume~20.
\newblock CRC press, 1995.

\bibitem{federer1996geometric}
H.~Federer.
\newblock {\em Geometric Measure Theory.-Reprint of the 1969 Edition}.
\newblock Springer, 1996.

\bibitem{gordon1999composition}
J.~Gordon and H.~Hedenmalm.
\newblock The composition operators on the space of {D}irichlet series with
  square summable coefficients.
\newblock {\em Michigan Math. J}, 46(2):313--329, 1999.

\bibitem{hedenmalm1995hilbert}
H.~Hedenmalm, P.~Lindqvist, and K.~Seip.
\newblock A {H}ilbert space of {D}irichlet series and systems of dilated
  functions in $ {L}^2 (0, 1) $.
\newblock {\em Duke Math. J. 86}, 1997.

\bibitem{kellay2012compact}
K.~Kellay and P.~Lef{\`e}vre.
\newblock Compact composition operators on weighted {H}ilbert spaces of
  analytic functions.
\newblock {\em Journal of Mathematical Analysis and applications},
  386(2):718--727, 2012.

\bibitem{lefevre2009essential}
P.~Lef\`evre.
\newblock Essential norms of weighted composition operators on the space
  $\mathcal{H}^\infty$ of {D}irichlet series.
\newblock {\em Studia Math.}, 191(1):57--66, 2009.

\bibitem{lefevre2010composition}
P.~Lef{\`e}vre, D.~Li, H.~Queff{\'e}lec, and L.~Rodr{\'\i}guez-Piazza.
\newblock Composition operators on {H}ardy-{O}rlicz spaces.
\newblock {\em Memoirs of the American Mathematical Society}, 207(974), 2010.

\bibitem{lefevre2011nevanlinna}
P.~Lef{\`e}vre, D.~Li, H.~Queff{\'e}lec, and L.~Rodr{\'\i}guez-Piazza.
\newblock Nevanlinna counting function and {C}arleson function of analytic
  maps.
\newblock {\em Mathematische Annalen}, 351(2):305--326, 2011.

\bibitem{maurizi2010some}
B.~Maurizi and H.~Queff{\'e}lec.
\newblock Some remarks on the algebra of bounded {D}irichlet series.
\newblock {\em Journal of Fourier Analysis and Applications}, 16(5):676--692,
  2010.

\bibitem{mccarthy2004hilbert}
J.E. McCarthy.
\newblock Hilbert spaces of {D}irichlet series and their multipliers.
\newblock {\em Transactions of the American Mathematical Society},
  356(3):881--894, 2004.

\bibitem{queffelecBook}
H.~Queff{\'e}lec and M.~Queff{\'e}lec.
\newblock {\em Diophantine Approximation and {D}irichlet Series}.
\newblock Hindustan Book Agency, 2013.

\bibitem{queffelec2013approximation}
H.~Queff{\'e}lec and K.~Seip.
\newblock Approximation numbers of composition operators on the $ {H}^2$ space
  of {D}irichlet series.
\newblock {\em arXiv preprint arXiv:1302.4117}, 2013.

\bibitem{queffelec2013decay}
H.~Queff{\'e}lec and K.~Seip.
\newblock Decay rates for approximation numbers of composition operators.
\newblock {\em arXiv preprint arXiv:1302.4116}, 2013.

\bibitem{shapiro1987essential}
J.H. Shapiro.
\newblock The essential norm of a composition operator.
\newblock {\em Annals of Mathematics}, pages 375--404, 1987.

\bibitem{tenenbaum1995introductiona}
G.~Tenenbaum.
\newblock Introduction a la th{\'e}orie analytique et probabiliste des nombres.
\newblock {\em Cours sp{\'e}cialis{\'e}s}, 1:473, 1995.

\end{thebibliography}
\bibliographystyle{plain}
\end{footnotesize}

{\small 
\noindent{\it 
Univ Lille-Nord-de-France UArtois, \\ 
Laboratoire de Math\'ematiques de Lens EA~2462, \\
F\'ed\'eration CNRS Nord-Pas-de-Calais FR~2956, \\
F-62\kern 1mm 300 LENS, FRANCE \\
maxime.bailleul@euler.univ-artois.fr
}}

\end{document}